\begin{document}


\newtheorem{thm}{Theorem}[section]
\newtheorem{lem}[thm]{Lemma}
\newtheorem{cor}[thm]{Corollary}
\newtheorem{pro}[thm]{Proposition}
\theoremstyle{definition}
\newtheorem{defi}[thm]{Definition}
\newtheorem{ex}[thm]{Example}
\newtheorem{rmk}[thm]{Remark}
\newtheorem{pdef}[thm]{Proposition-Definition}
\newtheorem{condition}[thm]{Condition}

\renewcommand{\labelenumi}{{\rm(\alph{enumi})}}
\renewcommand{\theenumi}{\alph{enumi}}

\newcommand {\emptycomment}[1]{} 

\newcommand{\nc}{\newcommand}
\newcommand{\delete}[1]{}

\nc{\tred}[1]{\textcolor{red}{#1}}
\nc{\tblue}[1]{\textcolor{blue}{#1}}
\nc{\tgreen}[1]{\textcolor{green}{#1}}
\nc{\tpurple}[1]{\textcolor{purple}{#1}}
\nc{\tgray}[1]{\textcolor{gray}{#1}}
\nc{\torg}[1]{\textcolor{orange}{#1}}
\nc{\tmag}[1]{\textcolor{magenta}}
\nc{\btred}[1]{\textcolor{red}{\bf #1}}
\nc{\btblue}[1]{\textcolor{blue}{\bf #1}}
\nc{\btgreen}[1]{\textcolor{green}{\bf #1}}
\nc{\btpurple}[1]{\textcolor{purple}{\bf #1}}

\nc{\todo}[1]{\tred{To do:} #1}

    \nc{\mlabel}[1]{\label{#1}}  
    \nc{\mcite}[1]{\cite{#1}}  
    \nc{\mref}[1]{\ref{#1}}  
    \nc{\meqref}[1]{\eqref{#1}}  
    \nc{\mbibitem}[1]{\bibitem{#1}} 

\delete{
    \nc{\mlabel}[1]{\label{#1}    { {\small\tgreen{\tt{{\ }(#1)}}}}} 
    \nc{\mcite}[1]{\cite{#1}{\small{\tt{{\ }(#1)}}}}  
    \nc{\mref}[1]{\ref{#1}{\small{\tred{\tt{{\ }(#1)}}}}}  
    \nc{\meqref}[1]{\eqref{#1}{{\tt{{\ }(#1)}}}}  
    \nc{\mbibitem}[1]{\bibitem[\bf #1]{#1}} 
}


\nc{\cm}[1]{\textcolor{red}{Chengming:#1}}
\nc{\yy}[1]{\textcolor{blue}{Yanyong: #1}}
\nc{\li}[1]{\textcolor{purple}{#1}}
\nc{\lir}[1]{\textcolor{purple}{Li:#1}}

\nc{\revise}[1]{\textcolor{blue}{#1}}


\nc{\tforall}{\ \ \text{for all }}
\nc{\hatot}{\,\widehat{\otimes} \,}
\nc{\complete}{completed\xspace}
\nc{\wdhat}[1]{\widehat{#1}}

\nc{\ts}{\mathfrak{p}}
\nc{\mts}{c_{(i)}\ot d_{(j)}}

\nc{\NA}{{\bf NA}}
\nc{\LA}{{\bf Lie}}
\nc{\CLA}{{\bf CLA}}
\nc{\gda}{GD algebra\xspace}
\nc{\gdas}{GD algebras\xspace}
\nc{\gdba}{GD bialgebra\xspace}
\nc{\gdbas}{GD bialgebras\xspace}
\nc{\cybe}{CYBE\xspace}
\nc{\nybe}{NYBE\xspace}
\nc{\ccybe}{CCYBE\xspace}

\nc{\ndend}{pre-Novikov\xspace}
\nc{\calb}{\mathcal{B}}
\nc{\rk}{\mathrm{r}}
\newcommand{\g}{\mathfrak g}
\newcommand{\h}{\mathfrak h}
\newcommand{\pf}{\noindent{$Proof$.}\ }
\newcommand{\frkg}{\mathfrak g}
\newcommand{\frkh}{\mathfrak h}
\newcommand{\Id}{\rm{Id}}
\newcommand{\gl}{\mathfrak {gl}}
\newcommand{\ad}{\mathrm{ad}}
\newcommand{\add}{\frka\frkd}
\newcommand{\frka}{\mathfrak a}
\newcommand{\frkb}{\mathfrak b}
\newcommand{\frkc}{\mathfrak c}
\newcommand{\frkd}{\mathfrak d}
\newcommand {\comment}[1]{{\marginpar{*}\scriptsize\textbf{Comments:} #1}}

\nc{\vspa}{\vspace{-.1cm}}
\nc{\vspb}{\vspace{-.2cm}}
\nc{\vspc}{\vspace{-.3cm}}
\nc{\vspd}{\vspace{-.4cm}}
\nc{\vspe}{\vspace{-.5cm}}


\nc{\disp}[1]{\displaystyle{#1}}
\nc{\bin}[2]{ (_{\stackrel{\scs{#1}}{\scs{#2}}})}  
\nc{\binc}[2]{ \left (\!\! \begin{array}{c} \scs{#1}\\
    \scs{#2} \end{array}\!\! \right )}  
\nc{\bincc}[2]{  \left ( {\scs{#1} \atop
    \vspace{-.5cm}\scs{#2}} \right )}  
\nc{\ot}{\otimes}
\nc{\sot}{{\scriptstyle{\ot}}}
\nc{\otm}{\overline{\ot}}
\nc{\ola}[1]{\stackrel{#1}{\la}}

\nc{\scs}[1]{\scriptstyle{#1}} \nc{\mrm}[1]{{\rm #1}}

\nc{\dirlim}{\displaystyle{\lim_{\longrightarrow}}\,}
\nc{\invlim}{\displaystyle{\lim_{\longleftarrow}}\,}

\nc{\bfk}{{\bf k}} \nc{\bfone}{{\bf 1}}
\nc{\rpr}{\circ}
\nc{\dpr}{{\tiny\diamond}}
\nc{\rprpm}{{\rpr}}

\nc{\mmbox}[1]{\mbox{\ #1\ }} \nc{\ann}{\mrm{ann}}
\nc{\Aut}{\mrm{Aut}} \nc{\can}{\mrm{can}}
\nc{\twoalg}{{two-sided algebra}\xspace}
\nc{\colim}{\mrm{colim}}
\nc{\Cont}{\mrm{Cont}} \nc{\rchar}{\mrm{char}}
\nc{\cok}{\mrm{coker}} \nc{\dtf}{{R-{\rm tf}}} \nc{\dtor}{{R-{\rm
tor}}}
\renewcommand{\det}{\mrm{det}}
\nc{\depth}{{\mrm d}}
\nc{\End}{\mrm{End}} \nc{\Ext}{\mrm{Ext}}
\nc{\Fil}{\mrm{Fil}} \nc{\Frob}{\mrm{Frob}} \nc{\Gal}{\mrm{Gal}}
\nc{\GL}{\mrm{GL}} \nc{\Hom}{\mrm{Hom}} \nc{\hsr}{\mrm{H}}
\nc{\hpol}{\mrm{HP}}  \nc{\id}{\mrm{id}} \nc{\im}{\mrm{im}}

\nc{\incl}{\mrm{incl}} \nc{\length}{\mrm{length}}
\nc{\LR}{\mrm{LR}} \nc{\mchar}{\rm char} \nc{\NC}{\mrm{NC}}
\nc{\mpart}{\mrm{part}} \nc{\pl}{\mrm{PL}}
\nc{\ql}{{\QQ_\ell}} \nc{\qp}{{\QQ_p}}
\nc{\rank}{\mrm{rank}} \nc{\rba}{\rm{RBA }} \nc{\rbas}{\rm{RBAs }}
\nc{\rbpl}{\mrm{RBPL}}
\nc{\rbw}{\rm{RBW }} \nc{\rbws}{\rm{RBWs }} \nc{\rcot}{\mrm{cot}}
\nc{\rest}{\rm{controlled}\xspace}
\nc{\rdef}{\mrm{def}} \nc{\rdiv}{{\rm div}} \nc{\rtf}{{\rm tf}}
\nc{\rtor}{{\rm tor}} \nc{\res}{\mrm{res}} \nc{\SL}{\mrm{SL}}
\nc{\Spec}{\mrm{Spec}} \nc{\tor}{\mrm{tor}} \nc{\Tr}{\mrm{Tr}}
\nc{\mtr}{\mrm{sk}}

\nc{\ab}{\mathbf{Ab}} \nc{\Alg}{\mathbf{Alg}}

\nc{\BA}{{\mathbb A}} \nc{\CC}{{\mathbb C}} \nc{\DD}{{\mathbb D}}
\nc{\EE}{{\mathbb E}} \nc{\FF}{{\mathbb F}} \nc{\GG}{{\mathbb G}}
\nc{\HH}{{\mathbb H}} \nc{\LL}{{\mathbb L}} \nc{\NN}{{\mathbb N}}
\nc{\QQ}{{\mathbb Q}} \nc{\RR}{{\mathbb R}} \nc{\BS}{{\mathbb{S}}} \nc{\TT}{{\mathbb T}}
\nc{\VV}{{\mathbb V}} \nc{\ZZ}{{\mathbb Z}}


\nc{\calao}{{\mathcal A}} \nc{\cala}{{\mathcal A}}
\nc{\calc}{{\mathcal C}} \nc{\cald}{{\mathcal D}}
\nc{\cale}{{\mathcal E}} \nc{\calf}{{\mathcal F}}
\nc{\calfr}{{{\mathcal F}^{\,r}}} \nc{\calfo}{{\mathcal F}^0}
\nc{\calfro}{{\mathcal F}^{\,r,0}} \nc{\oF}{\overline{F}}
\nc{\calg}{{\mathcal G}} \nc{\calh}{{\mathcal H}}
\nc{\cali}{{\mathcal I}} \nc{\calj}{{\mathcal J}}
\nc{\call}{{\mathcal L}} \nc{\calm}{{\mathcal M}}
\nc{\caln}{{\mathcal N}} \nc{\calo}{{\mathcal O}}
\nc{\calp}{{\mathcal P}} \nc{\calq}{{\mathcal Q}} \nc{\calr}{{\mathcal R}}
\nc{\calt}{{\mathcal T}} \nc{\caltr}{{\mathcal T}^{\,r}}
\nc{\calu}{{\mathcal U}} \nc{\calv}{{\mathcal V}}
\nc{\calw}{{\mathcal W}} \nc{\calx}{{\mathcal X}}
\nc{\CA}{\mathcal{A}}

\nc{\fraka}{{\mathfrak a}} \nc{\frakB}{{\mathfrak B}}
\nc{\frakb}{{\mathfrak b}} \nc{\frakd}{{\mathfrak d}}
\nc{\oD}{\overline{D}}
\nc{\frakF}{{\mathfrak F}} \nc{\frakg}{{\mathfrak g}}
\nc{\frakm}{{\mathfrak m}} \nc{\frakM}{{\mathfrak M}}
\nc{\frakMo}{{\mathfrak M}^0} \nc{\frakp}{{\mathfrak p}}
\nc{\frakS}{{\mathfrak S}} \nc{\frakSo}{{\mathfrak S}^0}
\nc{\fraks}{{\mathfrak s}} \nc{\os}{\overline{\fraks}}
\nc{\frakT}{{\mathfrak T}}
\nc{\oT}{\overline{T}}
\nc{\frakX}{{\mathfrak X}} \nc{\frakXo}{{\mathfrak X}^0}
\nc{\frakx}{{\mathbf x}}
\nc{\frakTx}{\frakT}      
\nc{\frakTa}{\frakT^a}        
\nc{\frakTxo}{\frakTx^0}   
\nc{\caltao}{\calt^{a,0}}   
\nc{\ox}{\overline{\frakx}} \nc{\fraky}{{\mathfrak y}}
\nc{\frakz}{{\mathfrak z}} \nc{\oX}{\overline{X}}


\title[Gel'fand-Dorfman bialgebras and Lie conformal bialgebras]{A bialgebra theory of Gel'fand-Dorfman algebras with applications to Lie conformal bialgebras}

\author{Yanyong Hong}
\address{School of Mathematics, Hangzhou Normal University,
Hangzhou 311121, PR China}
\email{yyhong@hznu.edu.cn}

\author{Chengming Bai}
\address{Chern Institute of Mathematics \& LPMC, Nankai University, Tianjin 300071, PR China}
\email{baicm@nankai.edu.cn}

\author{Li Guo}
\address{Department of Mathematics and Computer Science, Rutgers University, Newark, NJ 07102, USA}
         \email{liguo@rutgers.edu}

\subjclass[2020]{
17A30, 
17B62, 
17B69, 
17B38, 
17D25,  
81T40	
}

\keywords{Gel'fand-Dorfman algebra, Novikov algebra, Gel'fand-Dorfman bialgebra, Novikov bialgebra, Yang-Baxter equation, Lie conformal bialgebra, $\mathcal{O}$-operator, Zinbiel algebra}

\begin{abstract}
Gel'fand-Dorfman algebras (\gdas) give a natural construction of Lie conformal algebras and are in turn characterized by this construction. In this paper, we define the Gel'fand-Dorfman bialgebra (\gdba) and enrich the above construction to a construction of Lie conformal bialgebras by \gdbas. As a special case, Novikov bialgebras yield Lie conformal bialgebras. We further introduce the notion of the
Gel'fand-Dorfman Yang-Baxter equation (GDYBE), whose
skew-symmetric solutions produce \gdbas. Moreover, the
notions of $\mathcal{O}$-operators on \gdas and
pre-Gel'fand-Dorfman  algebras (pre-\gdas) are introduced to
provide skew-symmetric solutions of the GDYBE.
The relationships between these notions for \gdas and the
corresponding ones for Lie conformal algebras are given. In
particular, there is a natural construction of Lie conformal
bialgebras from pre-\gdas. Finally, \gdbas are
characterized by certain matched pairs and  Manin triples of
\gdas.
\end{abstract}

\maketitle

\vspace{-1.2cm}

\tableofcontents

\vspace{-1.2cm}

\allowdisplaybreaks

\section{Introduction}
Lie conformal algebras were introduced by V. Kac \mcite{K1} to give an axiomatic description of singular part of the operator product expansion of chiral fields in conformal field theory and have close connections with vertex algebras \mcite{K1},  infinite-dimensional Lie algebras satisfying the locality property \mcite{K} and Hamiltonian formalism in the theory of nonlinear evolution equations \mcite{BDK}.

In the spirit of the Lie bialgebra theory of Drinfeld and others~\mcite{D}, a bialgebra theory of Lie conformal algebras was developed by J. Liberati in \mcite{L}, together with conformal Manin triples and conformal classical Yang-Baxter equation (CCYBE). The operator form of the CCYBE was investigated in \mcite{HB}, where the definition of $\mathcal{O}$-operators on Lie conformal algebras was introduced. It was shown that $\mathcal{O}$-operators on Lie conformal algebras provide skew-symmetric solutions of the CCYBE,  and left-symmetric conformal algebras introduced in \mcite{HL} naturally provide $\mathcal{O}$-operators on the sub-adjacent Lie conformal algebras. These connections can be summarized in the diagram
\vspb
\begin{equation}
    \begin{split}
        \xymatrix{
            \txt{ \tiny left-symmetric\\ \tiny conformal\\ \tiny algebras} \ar[r]     & \txt{ \tiny $\mathcal{O}$-operators on\\ \tiny Lie conformal\\ \tiny algebras}\ar[r] &
            \text{solutions of}\atop \text{the CCYBE} \ar[r]  & \text{Lie conformal }\atop \text{ bialgberas}  \ar@{<->}[r] &
            \text{conformal Manin triples of} \atop \text{Lie conformal algebras} }
        \vspace{-.1cm}
    \end{split}
    \mlabel{eq:Lieconfdiag}
\end{equation}

On the other hand, the Novikov algebra is an important nonassociative algebra, introduced in connection with Hamiltonian
operators in the formal variational calculus~\mcite{GD1, GD2} and
Poisson brackets of hydrodynamic type~\mcite{BN}.
It is also a subclass of the pre-Lie algebra which is closely related to numerous areas in
mathematics and physics such as  affine manifolds and affine
structures on Lie groups \mcite{Ko},  convex homogeneous cones \mcite{V}, deformation of
associative algebras \mcite{Ger} and vertex algebras \mcite{BK, BLP}.
A Gel'fand-Dormfan algebra (GD algebra) is a vector space equipped with a Novikov product and a Lie bracket together with a compatibility condition between the two products.\footnote{The term Gel'fand-Dorfman bialgebra was used when the structure was first introduced in \mcite{X1}, but the present name was used in \mcite{KSO} in order to avoid confusion with the usual notion of a bialgebra as an algebra equipped with a coproduct.} Note that a Novikov algebra is a \gda with the trivial Lie algebra structure.  A \gda corresponds to a Hamiltonian pair, which plays a fundamental role in completely integrable systems (see \mcite{GD1}). \gdas related to differential Poisson algebras were studied in \mcite{KSO}, and the variety of commutative \gdas in which the Novikov algebras are commutative was shown to coincide with the variety of transposed Poisson algebras (see \mcite{Sa}).

\gdas naturally give rise to Lie conformal algebras~\mcite{X1} (see Proposition~\mref{Correspond-Algebra}). This construction of Lie conformal algebras in turn determines the axioms of \gdas:
\vspb
\begin{equation}
    \begin{split}
        \xymatrix{
            \text{\gdas} \ar@{<->}[r] &
            \text{a class of Lie conformal algebras}  }
    \end{split}
    \mlabel{eq:algdiag}
\end{equation}
Quite much is known about \gdas. Other than the study on Novikov algebras such as the classification of Novikov algebras in low dimensions \mcite{BM}, constructions of \gdas from more commonly known algebra structures were given in \mcite{X1}. \gdas on certain infinite-dimensional Lie algebras or Novikov algebras were classified in \mcite{OZ,X1}. Thus Diagram~\meqref{eq:algdiag} gives an effective way to produce Lie conformal algebras.

In this paper, we lift Diagram (\mref{eq:algdiag}) to the level of bialgebras by developing a bialgebra theory of Gel'fand-Dorfman algebras (\gdas), which is a combination of a Lie bialgebra \mcite{CP} and a Novikov bialgebra \mcite{HBG} with some compatibility conditions. We show that \gdbas naturally correspond to a class of Lie conformal bialgebras and this correspondence in fact characterizes \gdbas. Since Novikov bialgebras are special \gdbas, Novikov bialgebras also correspond to a (smaller) class of Lie conformal bialgebras. Furthermore, as the bialgebra theories in the classical cases of Lie bialgebras and infinitesimal bialgebras, \gdbas are characterized by some matched pairs and Manin triples of \gdas.

In connection with Diagram~\meqref{eq:Lieconfdiag}, we further introduce the Gel'fand-Dorfman Yang-Baxter equation  (GDYBE) and show that skew-symmetric solutions of the GDYBE
naturally provide \gdbas. The notion of $\mathcal{O}$-operators on \gdas is introduced to give skew-symmetric solutions of the GDYBE, while  pre-Gel'fand-Dorfman algebras (pre-\gdas) are introduced to provide $\mathcal{O}$-operators on the associated \gdas. Furthermore, the connection between \gdas and Lie conformal algebras in Diagram~\meqref{eq:algdiag} enriches to each of the induced notions of \gdas in Diagram \meqref{eq:Lieconfdiag}, yielding the following commutative diagram.
 \vspc
\begin{equation}
    \begin{split}\xymatrix{
            \text{pre-GD}\atop\text{ algebras} \ar[r]  \ar@{<->}[d]     & \mathcal{O}\text{-operators on}\atop\text{\gdas}\ar[r]  \ar@{<->}[d] &
            \text{solutions of}\atop \text{the GDYBE} \ar[r]  \ar@{<->}[d] & \text{GD}\atop \text{ bialgebras}  \ar@{<->}[r]  \ar@{<->}[d] &
        \text{Manin triples of} \atop \text{\gdas} \ar@{<->}[d] \\
        \txt{\tiny left-symmetric\\ \tiny conformal\\ \tiny algebras} \ar[r]     & \txt{ \tiny $\mathcal{O}$-operators \\ \tiny on Lie conformal \\ \tiny algebras}\ar[r] &
            \text{solutions of}\atop \text{the CCYBE} \ar[r]  & \text{Lie conformal }\atop \text{ bialgberas}  \ar@{<->}[r] &
        \text{conformal Manin triples of} \atop \text{Lie conformal algebras}
        }
\vspc
 \end{split}
    \mlabel{eq:bigcommdiag}
\end{equation}
Thanks to this diagram, there is a natural construction of Lie conformal bialgebras from pre-\gdas. Since Novikov bialgebras are special \gdbas, the commutative diagram also holds in the text of Novikov algebras.

Note that the Koszul dual of the operad of \gdas is the operad of
differential right Novikov-Poisson algebras (see \cite{KSO}).
Therefore, by \cite{GK}, there is a natural Lie algebra structure
on the vector space of tensor product of a \gda and a differential
right Novikov-Poisson algebra. Enriching this construction to the
bialgebra level and generalizing the construction of
infinite-dimensional Lie bialgebras developed in \cite{HBG} from
Novikov bialgebras should lead to a construction of more
infinite-dimensional Lie bialgebras, obtained by \gdbas and a quadratic
version of differential right Novikov-Poisson algebras, that is, differential right Novikov-Poisson algebras with nondgenerate
symmetric bilinear forms satisfying certain invariant
conditions.

The paper is outlined as follows. In Section~\mref{s:bialg}, we introduce the notion of \gdbas and apply it to construct a class of Lie conformal bialgebras.

In Section~\mref{s:ybe}, we define the GDYBE and $\calo$-operators for \gdas as the operator form of the GDYBE. We show that skew-symmetric solutions of the GDYBE correspond to a class of solutions of the CCYBE. The relationship between $\mathcal{O}$-operators on \gdas and those on the corresponding Lie conformal algebras is established. Moreover, we show that there is a natural construction of Lie conformal bialgebras from pre-\gdas.

In Section~\mref{s:char}, two characterizations of \gdbas are
given, in terms of matched pairs and Manin triples of \gdas
respectively. Moreover, the correspondence between \gdbas and a class of Lie conformal bialgebras remains valid
under the characterizations of the two types of bialgebras in
terms of the corresponding matched pairs and Manin triples.

\noindent
{\bf Notations.} Let ${\bf k}$ be a field of characteristic $0$ and let $\bfk[\partial]$ be the polynomial algebra with variable $\partial$. All vector spaces and algebras over ${\bf k}$ (resp. all ${\bf k}[\partial]$-modules and Lie conformal algebras) are assumed to be finite-dimensional (resp. finite) unless otherwise stated, even though many results still hold in the infinite-dimensional (resp. infinite) cases. For a vector space $A$, let
\vspc
$$\tau:A\otimes A\rightarrow A\otimes A,\quad a\otimes b\mapsto b\otimes a,\;\;\;a,b\in A,
\vspb
$$
be the flip operator. The identity map is denoted by $\id$. For a vector space $V$,
we use $V[\lambda]$ to denote the set of polynomials of $\lambda$ with coefficients in $V$.
Let $A$ be a vector space with a binary operation $\circ$.
Define linear maps
$L_{\circ}, R_{\circ}:A\rightarrow {\rm End}_{\bf k}(A)$ respectively by
\begin{eqnarray*}
L_{\circ}(a)b:=a\circ b,\;\; R_{\circ}(a)b:=b\circ a, \;\;\;a, b\in A.
\end{eqnarray*}
In particular, when $(A,\circ=[\cdot,\cdot])$ is a Lie algebra, we use the usual notion of the adjoint operator ${\rm ad}(a)(b):=[a,b]$ for all $a,b\in A$.
\vspb
\section{\gdbas corresponding to a class of Lie conformal bialgebras}
\mlabel{s:bialg}
In this section, we introduce the notion of Gel'fand-Dorfman bialgebras (\gdbas) and show that they correspond to a class of Lie conformal bialgebras. Examples of Lie conformal bialgebras obtained from \gdbas are presented.
\vspb
\subsection{A correspondence between GD (co)algebras and Lie conformal (co)algebras}
Recall that
 a {\bf Novikov algebra} $(A, \circ)$ is a vector space $A$ with a binary
    operation $\circ$ satisfying
\begin{eqnarray}
        \mlabel{lef}
        (a\circ b)\circ c-a\circ (b\circ c)&=&(b\circ a)\circ c-b\circ (a\circ c),\\
        \mlabel{Nov}
        (a\circ b)\circ c&=&(a\circ c)\circ b, \;\; a, b,c\in A.
\end{eqnarray}
%
If $(A, \circ)$ only satisfies Eq. (\mref{lef}), then $(A, \circ)$
is called a {\bf pre-Lie algebra}, also commonly known as a {\bf left-symmetric algebra}.
\begin{defi}\mcite{X1}
Let $(A, \circ)$ be a Novikov algebra and $(A, [\cdot, \cdot])$ be a Lie algebra. If they also satisfy the following compatibility condition
\begin{eqnarray}
[a\circ b, c]-[a\circ c,b]+[a,b]\circ c-[a, c]\circ b-a\circ [b,c]=0,\;\;\text{ $a$, $b$, $c\in A,$}
\end{eqnarray}
then $(A, \circ, [\cdot, \cdot])$ is called a {\bf Gel'fand-Dorfman algebra (\gda)}.
\end{defi}

\begin{rmk}
Note that such an algebra is called a Gel'fand-Dorfman bialgebra in \mcite{X1}. To avoid confusion with the notion of the usual bialgebra consisting of an algebra and a coalgebra, we call it a Gel'fand-Dorfman algebra following \mcite{KSO}. We also point out that a Novikov algebra is defined to be a \gda with the trivial Lie algebra structure.
\end{rmk}

We next recall the notion of Lie conformal algebras.
\begin{defi}\mcite{K1}
A {\bf Lie conformal algebra} $R$ is a ${\bf k}[\partial]$-module with a $\lambda$-bracket $[\cdot_\lambda \cdot]$ which defines a ${\bf k}$-bilinear
map $R\times R\rightarrow R[\lambda]$ satisfying
\begin{eqnarray*}
&& \textbf{(conformal sesquilinearity)}\ \ [\partial a_\lambda b]=-\lambda [a_\lambda b],~~~[a_\lambda \partial b]=(\lambda+\partial)[a_\lambda b],\\
&&\textbf{(skew-symmetry)}\ [a_\lambda b]=-[b_{-\lambda-\partial}a],\\
&&\textbf{(Jacobi identity)}\ \ [a_\lambda[b_\mu c]]=[[a_\lambda b]_{\lambda+\mu} c]+[b_\mu[a_\lambda c]],\quad a, b, c\in R.
\end{eqnarray*}
A Lie conformal algebra $R$ is called {\bf finite}, if it is finitely generated as a ${\bf k}[\partial]$-module.
\end{defi}

There is a natural correspondence between \gdas and Lie conformal algebras which also characterizes \gdas.

\begin{pro}\mcite{X1}
Let $(A, \circ, [\cdot,\cdot])$ be a $\bfk$-vector space with binary operations $\circ$ and $[\cdot,\cdot]$. Equip the free ${\bf k}[\partial]$-module $R:={\bf k}[\partial]A(=\bfk[\partial]\ot_\bfk A)$ with the bilinear map
\begin{eqnarray*}
[\cdot_\lambda \cdot]: R\times R \to R[\lambda], \quad [a_\lambda b]:=\partial(b\circ a)+\lambda(a\star b)+[a,b],\;\; a, b\in A,
\end{eqnarray*}
where $a\star b=a\circ b+b\circ a$. Then $(R,[\cdot_\lambda\cdot])$ is a Lie conformal algebra if and only if $(A, \circ, [\cdot,\cdot])$ is a \gda. We call
$R$ the {\bf Lie conformal algebra corresponding to (the \gda) $(A,
\circ, [\cdot,\cdot])$}. In particular, if $(A, [\cdot,\cdot])$ is abelian, we say that $R$ is the {\bf Lie conformal algebra corresponding to (the Novikov algebra) $(A, \circ)$}.
\mlabel{Correspond-Algebra}
\end{pro}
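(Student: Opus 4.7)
The plan is first to extend the given formula $[a_\lambda b] := \partial(b\circ a) + \lambda(a\star b) + [a,b]$, initially defined for $a,b\in A$, to a $\bfk$-bilinear map $R\times R\to R[\lambda]$ by imposing the conformal sesquilinearity rules $[\partial x_\lambda y] := -\lambda[x_\lambda y]$ and $[x_\lambda \partial y] := (\lambda+\partial)[x_\lambda y]$ as definitions. With this extension, conformal sesquilinearity holds tautologically, and both skew-symmetry and the Jacobi identity, once established for $a,b,c\in A$, propagate to all of $R$ by sesquilinearity. So it suffices to check these two axioms on generators.

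For skew-symmetry I would expand $-[b_{-\lambda-\partial}a]$ from the defining formula, obtaining
$$-[b_{-\lambda-\partial}a] = -\partial(a\circ b) + (\lambda+\partial)(b\star a) - [b,a].$$
The $\lambda$-coefficient matches $a\star b$ because $\star$ is manifestly commutative from its definition, and the $\partial$-coefficient matches via the tautology $a\star b - a\circ b = b\circ a$. Equating the remaining constant terms forces $[a,b] = -[b,a]$, so skew-symmetry of the $\lambda$-bracket is equivalent to antisymmetry of $[\cdot,\cdot]$ on $A$.

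The core of the proof is the Jacobi identity $[a_\lambda[b_\mu c]] = [[a_\lambda b]_{\lambda+\mu}c] + [b_\mu[a_\lambda c]]$ for $a,b,c\in A$. Expanding both sides via the defining formula and the sesquilinearity rules produces polynomials in $\lambda$ and $\mu$ of degree at most two in each, with coefficients in $R=\bfk[\partial]A$. The identity is then equivalent to the vanishing of each $\lambda^i\mu^j$-coefficient, viewed as a polynomial in $\partial$ with values in $A$. The top-degree $\lambda^2$-coefficients isolate the left-symmetry axiom \eqref{lef}, the $\lambda\mu$-coefficients isolate the right-commutativity axiom \eqref{Nov} (these are the purely Novikov conditions one obtains by setting $[\cdot,\cdot]=0$), the $\lambda^0\mu^0$-coefficient yields the Jacobi identity for $[\cdot,\cdot]$, and the remaining mixed coefficients involving one factor each from $\circ$ and $[\cdot,\cdot]$ assemble precisely into the GD compatibility condition.

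The main obstacle I expect is bookkeeping: since $\partial$ is an internal operator on $R$ that does not commute with element-labels from $A$, one must carefully track whether each $\partial$ factor arises from the defining formula or from a sesquilinearity reduction, and confirm that each \gda axiom appears exactly once with no extraneous relations being forced. Once this dictionary between coefficients and axioms is established, the Jacobi identity for $(R,[\cdot_\lambda\cdot])$ holds if and only if the Novikov axioms, the Lie Jacobi, and the compatibility condition all hold on $A$, which combined with the skew-symmetry analysis produces the claimed equivalence $(R,[\cdot_\lambda\cdot])$ is a Lie conformal algebra $\iff (A,\circ,[\cdot,\cdot])$ is a \gda.
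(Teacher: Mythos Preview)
The paper does not supply its own proof of this proposition: it is stated with a citation to \cite{X1} and used as a black box, so there is no in-paper argument to compare against. Your outline is the standard route to this result and is correct in spirit.

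A few comments on accuracy. Your skew-symmetry computation is clean and correct: the constant term indeed forces antisymmetry of $[\cdot,\cdot]$, while the $\lambda$- and $\partial$-parts hold identically from the definition of $\star$. For the Jacobi identity, your description of the degree bounds is right (degree at most two in each of $\lambda,\mu$, with $\partial$-coefficients in $A$), and the claim that the resulting system of coefficient identities is equivalent to the \gda axioms is correct. However, your attribution of specific axioms to specific monomials is slightly oversimplified: the Novikov axioms \eqref{lef} and \eqref{Nov} do not each sit cleanly in a single coefficient but rather appear in linear combinations across the $\lambda^2$, $\lambda\mu$, $\mu^2$ coefficients (and their $\partial$-parts), and one must use that the $\lambda^2$ and $\mu^2$ conditions are related by the $a\leftrightarrow b$, $\lambda\leftrightarrow\mu$ symmetry of the Jacobi identity together with skew-symmetry. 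Similarly, the GD compatibility arises from the degree-one coefficients only after the Novikov identities are already in hand. This is exactly the ``bookkeeping obstacle'' you flag; in a full write-up you would need to exhibit the dictionary explicitly rather than assert it.
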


Recall \mcite{HBG, KUZ} that a {\bf Novikov coalgebra} $(A, \Delta)$ is a vector space $A$ with a linear map $\Delta: A\rightarrow A\otimes A$ such that
\begin{eqnarray}
\mlabel{Lc3}(\id\otimes \Delta)\Delta(a)-(\tau\otimes {\rm id})(\id\otimes \Delta)\Delta(a)&=&(\Delta\otimes \id)\Delta(a)-(\tau\otimes {\rm id})(\Delta\otimes \id)\Delta(a),\\
\mlabel{Lc4}(\tau\otimes {\rm id})(\id\otimes \Delta)\tau
     \Delta(a)&=&(\Delta\otimes \id)\Delta(a), \;\;a\in A.
\end{eqnarray}

A {\bf Lie coalgebra} $(L, \delta)$ is a vector space $L$ with a linear map $\delta: L\rightarrow L\otimes L$ satisfying
\begin{eqnarray}
&&\delta(a)=-\tau \delta(a),\\
&& (\id \otimes \delta)\delta(a)-(\tau\otimes \id)(\id \otimes \delta)\delta(a)=(\delta\otimes \id)\delta(a),\;\;a\in L.
\end{eqnarray}

Let $\langle \cdot, \cdot \rangle$ be the usual pairing between a vector space and its dual vector space. Let $V$ and $W$ be vector spaces. For a linear map $\varphi: V\rightarrow W$, denote the transpose map by $\varphi^\ast: W^\ast\rightarrow V^\ast$ given by
\vspc
\begin{eqnarray*}
\langle \varphi^\ast(f),v\rangle=\langle f, \varphi(v)\rangle,\;\;\; f\in W^\ast, v\in V.
\end{eqnarray*}
Note that $(A, \Delta)$ is a Novikov coalgebra if and only if $(A^\ast, \Delta^\ast)$ is a Novikov algebra, and $(L, \delta)$ is a Lie coalgebra if and only if $(L^\ast, \delta^\ast)$ is a Lie algebra.

Next, we introduce the notion of a Gel'fand-Dorfman coalgebra.

\begin{defi}
A {\bf Gel'fand-Dorfman coalgebra (GD coalgebra)} is a triple $(A, \Delta, \delta)$ where $(A, \Delta)$ is a Novikov coalgebra, $(A, \delta)$ is a Lie coalgebra, and they satisfy, for all $a\in A$,
\vspb
\begin{eqnarray}
\mlabel{Lc2}&&(\id \otimes \delta)\Delta(a)-(\tau\otimes\id)(\id\otimes \Delta)\delta(a)
+(\tau\otimes\id)(\id \otimes \delta)\tau \Delta(a)=(\Delta\otimes \id)\delta(a)+(\delta\otimes \id)\Delta(a).
\end{eqnarray}
\end{defi}

It is easy to check that
$(A, \Delta, \delta)$ is a GD coalgebra if and only if $(A^\ast, \Delta^\ast, \delta^\ast)$ is a \gda.

\begin{defi}\mcite{L}
A {\bf Lie conformal coalgebra} is a ${\bf k}[\partial]$-module $R$ endowed with a ${\bf k}[\partial]$-module homomorphism $\delta: R\rightarrow R\otimes R$ such that
\vspb
\begin{eqnarray*}
&&\delta(a)=-\tau \delta(a), \\
&&(\id\otimes \delta)\delta(a)-(\tau\otimes \id) (\id\otimes \delta)\delta(a)=(\delta\otimes \id)\delta(a),\;\; a\in R,
\end{eqnarray*}
where the ${\bf k}[\partial]$-module structure on $R\otimes R$ is defined by $\partial(a\otimes b)=\partial a \otimes b+a\otimes \partial b$ for $a$, $b\in R$.
\vspb
\end{defi}

Let $\partial^{\otimes^2}:=\partial\otimes \id+\id\otimes \partial$. A coalgebra analog of Proposition \mref{Correspond-Algebra} is given as follows.

\begin{pro}\mlabel{corr-coalgebra}
Let $R={\bf k}[\partial]A$ be the free ${\bf k}[\partial]$-module on a vector space $A$. Suppose that $\delta_0$ and $\Delta$ are ${\bf k}[\partial]$-module homomorphisms from $R$ to $R\otimes R$ such that $\delta_0(A)\subset A\otimes A$ and $\Delta(A)\subset A\otimes A$.
Define a linear map $\delta: R\rightarrow R\otimes R$ by first taking
\vspb
\begin{eqnarray}\mlabel{cobracket}
\delta(a)=\delta_0(a)+(\partial\otimes \id)\Delta(a)-\tau (\partial\otimes \id)\Delta(a),~~~~a\in A,
\end{eqnarray}
and then extending to $R$. Then $(R, \delta)$ is a Lie conformal coalgebra if and only if $(A, \Delta, \delta_0)$ is a GD coalgebra.
\end{pro}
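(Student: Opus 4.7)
The plan is to verify both Lie conformal coalgebra axioms for $\delta$ directly on generators $a\in A$, since $\bfk[\partial]$-linearity extends both axioms to all of $R$. Each axiom will decompose, by separating along the grading in $\partial$ on $R^{\otimes 3}$, into exactly the ingredients of the GD coalgebra structure for $(A,\Delta,\delta_0)$.

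For co-skew-symmetry, writing $\Delta(a)=\sum a_{(1)}\otimes a_{(2)}$, formula (\mref{cobracket}) gives
$$\delta(a)=\delta_0(a)+\sum\partial a_{(1)}\otimes a_{(2)}-\sum a_{(2)}\otimes\partial a_{(1)}.$$
The two $\partial$-terms interchange with a sign under $\tau$ and thus contribute $0$ to $\delta(a)+\tau\delta(a)$, leaving $\delta(a)+\tau\delta(a)=\delta_0(a)+\tau\delta_0(a)$. Hence $\delta$ is co-skew on $A$ iff $\delta_0$ is, which is precisely the co-skew axiom of the Lie coalgebra $(A,\delta_0)$.

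For co-Jacobi, set $J(a):=(\id\otimes\delta)\delta(a)-(\tau\otimes\id)(\id\otimes\delta)\delta(a)-(\delta\otimes\id)\delta(a)$ and identify $R^{\otimes3}$ with $\bfk[\partial_1,\partial_2,\partial_3]\otimes A^{\otimes3}$, where $\partial_i$ denotes $\partial$ acting in slot $i$. Writing $\delta=\delta_0+\delta_1$ on $A$ with $\delta_1(a):=(\partial\otimes\id)\Delta(a)-\tau(\partial\otimes\id)\Delta(a)$, and using the $\bfk[\partial]$-linearity of $\delta$ together with the Leibniz rule on $R\otimes R$, one expands $J(a)$ as $\sum_{i,j,k}\partial_1^i\partial_2^j\partial_3^k\cdot X_{ijk}(a)$ with $X_{ijk}(a)\in A^{\otimes3}$ and $i+j+k\leq 2$. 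Since $R^{\otimes3}$ is free over $\bfk[\partial_1,\partial_2,\partial_3]$, the identity $J(a)=0$ is equivalent to $X_{ijk}(a)=0$ for every multi-index. The constant piece is the co-Jacobi identity for $\delta_0$; the degree-one pieces collapse, after invoking co-skew-symmetry, to the single compatibility (\mref{Lc2}); the degree-two pieces produce the Novikov coalgebra axioms (\mref{Lc3}) and (\mref{Lc4}). Combined with the first step, this gives the desired equivalence.

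The main obstacle is the bookkeeping in the expansion of $J(a)$: since $\delta_1$ already carries one $\partial$, evaluating $(\id\otimes\delta)\delta_1(a)$ and $(\delta\otimes\id)\delta_1(a)$ forces an application of $\delta$ to $\partial b$, which by $\bfk[\partial]$-linearity distributes a single $\partial$ into two tensor factors through the Leibniz rule and produces many cross terms. Sorting these terms by the monomial $\partial_1^i\partial_2^j\partial_3^k$ and verifying that, after using co-skew-symmetry of $\delta_0$ and of $\Delta-\tau\Delta$, the resulting coefficient identities match exactly (\mref{Lc2})–(\mref{Lc4}) and the co-Jacobi for $\delta_0$ is the delicate computation. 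This is the coalgebra dual of the argument for Proposition \mref{Correspond-Algebra}, where the Lie conformal Jacobi identity $[a_\lambda[b_\mu c]]=[[a_\lambda b]_{\lambda+\mu}c]+[b_\mu[a_\lambda c]]$ is separated into coefficients of monomials in $\lambda,\mu$ and each monomial recovers one GD algebra axiom.
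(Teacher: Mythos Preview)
Your proposal is correct and follows essentially the same route as the paper: both reduce the co-skew-symmetry to that of $\delta_0$, then expand the co-Jacobi expression for $a\in A$ and separate it according to the monomials in $\partial$ acting on the three tensor slots. The paper carries out this expansion explicitly and, after invoking co-skew-symmetry, reads off four independent coefficient identities (from the components in $A^{\otimes3}$, $\partial A\otimes A\otimes A$, $\partial A\otimes\partial A\otimes A$, and $\partial^2 A\otimes A\otimes A$), which are precisely the co-Jacobi identity for $\delta_0$, the compatibility \meqref{Lc2}, and the Novikov coalgebra axioms \meqref{Lc3}--\meqref{Lc4}; your outline predicts exactly this outcome.
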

\vspb
\begin{proof}
Since $\delta_0$ and $\Delta$ are ${\bf k}[\partial]$-module homomorphisms, $\delta$ is a ${\bf k}[\partial]$-module homomorphism.
Therefore, $\delta(a)=-\tau\delta(a)$ for all $a\in R$ if and only if $\delta_0(b)=-\tau\delta_0(b)$ for all $b\in A$.

By the definition of $\delta$, we have
\vspb
\begin{eqnarray*}
(\id \otimes \delta)\delta(a)&=&(\id \otimes \delta)(\delta_0(a)+(\partial\otimes \id)\Delta(a)-\tau (\partial\otimes \id)\Delta(a))\\
&=&(\id\otimes \delta_0)\delta_0(a)+(\id\otimes \partial\otimes \id)(\id\otimes \Delta)\delta_0(a)-(\id\otimes \tau)(\id\otimes \partial\otimes \id)(\id\otimes \Delta)\delta_0(a)\\
&&+(\partial\otimes \id\otimes \id)(\id\otimes \delta_0)\Delta(a)+(\partial\otimes \id\otimes \id)(\id\otimes \partial\otimes \id)(\id\otimes \Delta)\Delta(a)\\
&&-(\partial\otimes \id\otimes \id)(\id\otimes \tau)(\id\otimes \partial\otimes \id)(\id\otimes \Delta)\Delta(a)
-(\id\otimes \partial^{\otimes^2})(\id\otimes \delta_0)\tau \Delta(a)\\
&&-(\id\otimes \partial^{\otimes^2})(\id\otimes \partial \otimes \id)(\id\otimes \Delta)\tau\Delta(a)\\
&&+(\id\otimes \partial^{\otimes^2})(\id\otimes \tau)(\id\otimes \partial\otimes \id)(\id\otimes \Delta)\tau \Delta(a).
\end{eqnarray*}
Similarly, we have
\vspb
\begin{eqnarray*}(\delta\otimes \id)\delta(a)&=&(\delta_0\otimes \id)\delta_0(a)
+(\partial\otimes \id\otimes \id)(\Delta\otimes \id)\delta_0(a)\\
&&-(\tau\otimes \id)(\partial\otimes \id\otimes \id)(\Delta\otimes \id)\delta_0(a)
+(\partial^{\otimes^2}\otimes \id)(\delta_0\otimes \id)\Delta(a)\\
&&+(\partial\otimes \id\otimes \id)(\partial^{\otimes^2}\otimes \id)(\Delta\otimes \id)\Delta(a)
-(\tau\otimes \id)(\partial\otimes \id\otimes \id)(\partial^{\otimes^2}\otimes \id)(\Delta\otimes \id)\Delta(a)\\
&&-(\id\otimes \id\otimes \partial)(\delta_0\otimes \id)\tau \Delta(a)
-(\id\otimes \id\otimes \partial)(\partial\otimes \id\otimes \id)(\Delta\otimes \id)\tau \Delta(a)\\
&&+(\id\otimes \id\otimes \partial)(\id\otimes \partial\otimes \id)(\tau\otimes \id)(\Delta\otimes \id)\tau \Delta(a).
\end{eqnarray*}
Then by the cocommutativity of $\delta$, considering the terms in $A\otimes A\otimes A$, $\partial A\otimes A\otimes A$,  $\partial A\otimes \partial A\otimes A$, and
$\partial^2A\otimes A\otimes A$, the following equation
\vspb
$$(\id\otimes \delta)\delta(a)-(\tau\otimes \id)(\id\otimes \delta)\delta(a)=(\delta\otimes \id)\delta(a),\;\;a\in A,$$ holds if and only if the following equations hold.
\vspb
\begin{eqnarray}
~~&&(\id\otimes \delta_0)\delta_0(a)-(\tau\otimes \id) (\id\otimes \delta_0)\delta_0(a)=(\delta_0\otimes \id)\delta_0(a),\\
\mlabel{v1}~~&&(\id\otimes \delta_0)\Delta(a)-(\tau\otimes \id) (\id\otimes \Delta)\delta_0(a)
+(\tau\otimes \id)(\id\otimes \delta_0)\tau \Delta(a)\\
&&\mbox{}\hspace{0.5cm}=(\Delta\otimes
\id)\delta_0(a)+(\delta_0\otimes \id)\Delta(a),\nonumber\\
\mlabel{v4}~~&&(\id\otimes \Delta)\Delta(a)-(\tau\otimes \id)(\id\otimes \Delta)\Delta(a)=(\Delta\otimes \id)\Delta(a)-(\tau\otimes \id)(\Delta\otimes \id)\Delta(a),\\
\mlabel{v7}~~&&(\tau\otimes \id)(\id\otimes \Delta)\tau \Delta(a)=(\Delta\otimes \id)\Delta(a),
\quad a\in A.
\end{eqnarray}
Then the conclusion follows directly.
\end{proof}

\vspd

\subsection{\gdbas and the relationship with Lie conformal bialgebras}\

Let $(A,\circ, [\cdot,\cdot])$ be a \gda. Set
\vspb
$$a\star b:=a\circ b+b\circ a,\;\;a.(b\otimes c):=[a,b]\otimes c+b\otimes [a,c],\;\;a, b, c\in A.$$

Recall \mcite{HBG} that a {\bf Novikov bialgebra} is a tripe $(A,\circ,\Delta)$ where  $(A,\circ)$ is a Novikov algebra and $(A, \Delta)$ is a Novikov coalgebra satisfying
\vspb
\begin{eqnarray}
            &\mlabel{Lb5}\Delta(a\circ b)=(R_{\circ} (b)\otimes \id)\Delta (a)+(\id\otimes L_{\star}(a))(\Delta(b)+\tau\Delta(b)),&\\
            &\mlabel{Lb6}(L_{\star}(a)\otimes \id)\Delta(b)-(\id\otimes L_{\star}(a))\tau\Delta (b)=(L_{\star}(b)\otimes \id)\Delta(a)-(\id\otimes L_{\star}(b))\tau\Delta(a),&\\
            &\mlabel{Lb7}\quad (\id\otimes R_{\circ}(a)-R_{\circ}(a)\otimes
            \id)(\Delta(b)+\tau\Delta(b))=(\id\otimes R_{\circ}(b)-R_{\circ}(b)\otimes
            \id)(\Delta(a)+\tau\Delta(a)),&
    \end{eqnarray}
 for all $a$, $b\in A$.
A {\bf  Lie bialgebra}
is a triple $(L, [\cdot,\cdot],\delta)$ such that
$(L, [\cdot, \cdot])$ is a Lie algebra, $(L,
\delta)$ is a Lie coalgebra, and the following
compatibility condition holds.
\begin{eqnarray} \mlabel{lia3}
\notag \delta([a,b])=a.\delta(b)-b.\delta(a),\;\;  a, b\in L.
\vspb
\end{eqnarray}

Next, we introduce the notion of a Gel'fand-Dorfman bialgebra.

\begin{defi}
Let $(A, \circ, [\cdot,\cdot])$ be a \gda. If there are linear maps $\delta$, $\Delta: A\rightarrow A\otimes A$ such that $(A, \Delta, \delta)$ is a GD coalgebra and the following compatibility condition holds: 
\vspb
\begin{eqnarray}
\mlabel{Lb4}&&\delta(b\circ a)+\Delta([a,b])=(R_{\circ}(a)\otimes \id)\delta(b)+(L_{\circ}(b)\otimes \id)\delta(a)\\
&&\;\;\;\;\;\;\;\;\;+(\id\otimes L_{\star}(b))\delta(a)+a.\Delta(b)-(\id\otimes \ad(b))(\Delta(a)+\tau\Delta(a)),\;\;a, b\in A,\nonumber
\end{eqnarray}
then $(A, \circ, [\cdot,\cdot], \Delta, \delta)$ is called a {\bf Gel'fand-Dorfman bialgebra (\gdba)}.
\vspb
\end{defi}

\begin{rmk}
Note again that the above definition of \gdbas is different from that in \mcite{X1} which is now simply called a \gda.
On the other hand, if $(A, \circ, [\cdot,\cdot], \Delta, \delta)$ is a \gdba, then $(A, \circ, \Delta)$ is a Novikov bialgebra and $(A, [\cdot,\cdot], \delta)$ is a Lie bialgebra. Moreover, a Lie bialgebra can be regarded as a \gdba in which $\circ$ and $\Delta$ are trivial, and a Novikov bialgebra can be regarded as a \gdba in which  $[\cdot,\cdot]$ and $\delta$ are trivial.
\end{rmk}

We introduce two special classes of  \gdbas with $\Delta$, $[\cdot,\cdot]$ and $\circ$, $\delta$
being trivial respectively.
\begin{defi}
Let $(A, \circ)$ be a Novikov algebra and $(A, \delta)$ be a Lie coalgebra. If
\vspb
\begin{eqnarray*}
\delta(b\circ a)=(R_{\circ}(a)\otimes \id)\delta(b)+(L_{\circ}(b)\otimes \id)\delta(a)+(\id\otimes L_{\star}(b))\delta(a),\;\;a, b\in A,
\end{eqnarray*}
then the resulting \gdba $(A, \circ, \delta)$ (with $\Delta$ and $[\cdot,\cdot]$ being trivial) is called a {\bf \gdba of Novikov type}.

Let $(L, [\cdot, \cdot])$ be a Lie algebra and $(L, \Delta)$ be a Novikov coalgebra. If
\vspb
\begin{eqnarray*}
&&\Delta([a,b])=a.\Delta(b)-(\id\otimes \ad(b))(\Delta(a)+\tau\Delta(a)), \;\;a, b\in A,
\end{eqnarray*}
the resulting \gdba $(L, [\cdot,\cdot], \Delta)$ (with $\circ$ and $\delta$ being trivial) is called a {\bf \gdba of Lie type}.
\vspb
\end{defi}

Here are some examples of the two classes of \gdbas.
\vspb
\begin{ex}\mlabel{ex-GD}
Let $(A={\bf k}e_1\oplus {\bf k}e_2, \circ)$ be the Novikov
algebra given by~\cite{BM}
\vspb
\begin{eqnarray*}
e_1\circ e_1=e_2\circ e_1=e_2\circ e_2=0,\;\;\;e_1\circ e_2=e_2.
\end{eqnarray*}
Define a linear map $\delta_0: A\rightarrow A\otimes A$ by
\vspb
\begin{eqnarray*}
\delta_0(e_1)=0,\;\;\;\delta_0(e_2)=e_1\otimes e_2-e_2\otimes e_1.
\end{eqnarray*}
One can check that $(A, \circ, \delta_0)$ is a \gdba of Novikov type.

Let $(L={\bf k}e_1\oplus {\bf k}e_2, [\cdot,\cdot])$ be the Lie algebra given by
$[e_1,e_2]=e_2.$
Define a linear map $\Delta: A\rightarrow A\otimes A$ by
\vspb
\begin{eqnarray*}
\Delta(e_1)=0,\;\;\;\Delta(e_2)=e_1\otimes e_2.
\end{eqnarray*}
One can check that $(L, [\cdot,\cdot], \Delta)$ is a \gdba of Lie type.
\vspb
\end{ex}

\begin{defi}\mcite{L}
A {\bf Lie conformal bialgebra} is a triple $(R, [\cdot_\lambda \cdot],\delta)$ such that $(R, [\cdot_\lambda \cdot])$ is a Lie conformal algebra,
$(R, \delta)$ is a Lie conformal coalgebra, and they satisfy the cocycle condition
\vspc
\begin{eqnarray*}
a_\lambda \delta(b)-b_{-\lambda-\partial}\delta(a)=\delta([a_\lambda b]),\;\;a, b\in R,
\end{eqnarray*}
where $a_\lambda \delta(b):=\sum ([a_\lambda b_{(1)}]\otimes
b_{(2)}+b_{(1)}\otimes [a_\lambda b_{(2)}])$ if $\delta(b)=\sum
b_{(1)}\otimes b_{(2)}$, for all $a,b\in R$.
\end{defi}

We establish the relationship between \gdbas and Lie conformal bialgebras.
\begin{thm}\mlabel{thm1-corr}
Let $R={\bf k}[\partial]A$ be the Lie conformal algebra corresponding to a \gda $(A, \circ, [\cdot, \cdot])$.
 Suppose that $\delta_0$ and $\Delta$ are ${\bf k}[\partial]$-module homomorphisms from $R$ to $R\otimes R$ such that $\delta_0(A)\subset A\otimes A$ and $\Delta(A)\subset A\otimes A$. Then $(R, [\cdot_\lambda \cdot], \delta)$ is a Lie conformal bialgebra with $\delta: R\rightarrow R\otimes R$ defined by Eq.~\meqref{cobracket}
if and only if $(A, \circ, [\cdot,\cdot], \Delta, \delta_0)$ is a \gdba. We call $(R, [\cdot_\lambda \cdot], \delta)$ {\bf the Lie conformal bialgebra corresponding to the \gdba $(A, \circ, [\cdot,\cdot], \Delta, \delta_0)$}.
\vspb
\end{thm}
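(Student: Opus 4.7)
The propositions already established reduce the theorem to a single verification. By Proposition~\ref{Correspond-Algebra}, $(R,[\cdot_\lambda\cdot])$ is a Lie conformal algebra if and only if $(A,\circ,[\cdot,\cdot])$ is a \gda; by Proposition~\ref{corr-coalgebra}, $(R,\delta)$ is a Lie conformal coalgebra if and only if $(A,\Delta,\delta_0)$ is a GD coalgebra. Thus, granted both equivalences, only the cocycle condition
\[
a_\lambda\delta(b)-b_{-\lambda-\partial}\delta(a)=\delta([a_\lambda b]), \qquad a,b\in R,
\]
remains to be matched against the compatibility~\eqref{Lb4}. Because $\delta$ is $\bfk[\partial]$-linear and the $\lambda$-bracket is conformally sesquilinear, it suffices to check this identity for $a,b\in A$.

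The plan is to substitute the explicit formulas
\[
[a_\lambda b]=\partial(b\circ a)+\lambda(a\star b)+[a,b], \qquad \delta(x)=\delta_0(x)+(\partial\ot\id)\Delta(x)-(\id\ot\partial)\tau\Delta(x),
\]
into both sides of the cocycle identity and then expand, using the sesquilinearity $[a_\lambda\partial c]=(\lambda+\partial)[a_\lambda c]$ and the skew-symmetry $[a_\lambda b]=-[b_{-\lambda-\partial}a]$. Each side becomes a polynomial in $\lambda$ with coefficients in $R\otimes R$, and I would regroup the resulting terms according to (i) the power of $\lambda$ and (ii) which tensor leg of $R\otimes R$, if any, carries an outer $\partial$. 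Matching the $\lambda^0$, constant-in-outer-$\partial$, coefficient should reproduce~\eqref{Lb4} directly; the remaining coefficients, after using $\bfk[\partial]$-linearity to pull external $\partial$'s outside the tensor legs, should reduce either to consequences of the GD (co)algebra axioms already in force or to the $(a\leftrightarrow b)$-swap of~\eqref{Lb4}. The converse runs the same computation in reverse: given~\eqref{Lb4} together with the GD algebra/coalgebra structures, every coefficient in the expansion vanishes, yielding the full cocycle identity.

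The main difficulty is the volume and the sign-bookkeeping. The right-hand side $\delta([a_\lambda b])$ produces three terms (from the $\partial$-, $\lambda$-, and constant parts of the bracket), each in turn expanded by the three-summand form of $\delta$; the left-hand side has an analogous expansion, with the extra subtlety that $b_{-\lambda-\partial}\delta(a)$ introduces polynomials in $-\lambda-\partial$ that must be reconciled with the integer $\partial$-monomials produced on the right. Tracking the Leibniz action of $\partial$ on the tensor factors of $R\otimes R$, and the sign flips produced by the various occurrences of $\tau$, is where arithmetic slips are most likely. Once the expansion is organized coefficient-by-coefficient in $\lambda$ and in $\partial$-degree per leg, the equivalence with~\eqref{Lb4} is mechanical.
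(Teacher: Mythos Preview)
Your overall strategy---expand both sides of the cocycle identity for $a,b\in A$ and match coefficients by $\lambda$-degree and by the $\partial$-grading on each tensor leg---is exactly what the paper does. The gap is in your expectation of \emph{what} those coefficients encode. You predict that the $\lambda^0$, $\partial$-free piece gives~\eqref{Lb4} and that every other coefficient ``should reduce either to consequences of the GD (co)algebra axioms already in force or to the $(a\leftrightarrow b)$-swap of~\eqref{Lb4}.'' This is not correct.

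In the paper's computation the coefficient comparison produces seven independent conditions (labelled \eqref{lbc1}--\eqref{lbc7}). Two of them, \eqref{lbc2} and \eqref{lbc5}, are jointly equivalent to~\eqref{Lb4}. But \eqref{lbc1} is precisely the Lie bialgebra cocycle condition $\delta_0([a,b])=a.\delta_0(b)-b.\delta_0(a)$, and \eqref{lbc3}, \eqref{lbc4}, \eqref{lbc6}, \eqref{lbc7} are jointly equivalent to the three Novikov bialgebra compatibilities \eqref{Lb5}--\eqref{Lb7}. None of these are consequences of the GD algebra or GD coalgebra axioms: they are genuine \emph{bialgebra} compatibilities relating the product to the coproduct, and they cannot be deduced from the algebra and coalgebra structures separately (take for instance $[\cdot,\cdot]=0$, $\delta_0=0$: then~\eqref{Lb4} is vacuous, yet \eqref{Lb5}--\eqref{Lb7} remain nontrivial constraints linking $\circ$ and $\Delta$). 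So the cocycle identity on $R$ is equivalent not to~\eqref{Lb4} alone but to the full package: Lie bialgebra condition $+$ Novikov bialgebra conditions $+$~\eqref{Lb4}. Your write-up needs to acknowledge and verify all of these, not dismiss them as automatic.
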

\begin{proof}
By Proposition \mref{corr-coalgebra}, $(R, \delta)$ is a Lie conformal coalgebra if and only if $(A, \Delta, \delta_0)$ is a GD coalgebra. Let $a,b\in A$.
Set $\delta_0(a)=\sum a_{(1)}\otimes a_{(2)}$ and $\Delta(a)=\sum \overline{a_{(1)}}\otimes \overline{a_{(2)}}$.
Then we get
\vspb
\begin{eqnarray*}
&&a_\lambda \delta(b)-b_{-\lambda-\partial}\delta(a)\\
&&\quad=\mbox{}\hspace{0.5cm}a_\lambda(\delta_0(b)+(\partial\otimes \id)\Delta(b)-\tau(\partial\otimes \id)\Delta(b))
-b_{-\lambda-\partial}(\delta_0(a)+(\partial\otimes \id)\Delta(a)-\tau(\partial\otimes \id)\Delta(a))\\
&&\quad =\mbox{}\hspace{0.5cm}a_\lambda(\sum b_{(1)}\otimes b_{(2)})+a_\lambda(\sum \partial \overline{b_{(1)}} \otimes \overline{b_{(2)}}-\overline{b_{(2)}}
\otimes \partial \overline{b_{(1)}})\\
&&\mbox{}\hspace{0.7cm}-b_{-\lambda-\partial}(\sum a_{(1)}\otimes a_{(2)})-b_{-\lambda-\partial}(\sum \partial \overline{a_{(1)}} \otimes \overline{a_{(2)}}-\overline{a_{(2)}}
\otimes \partial \overline{a_{(1)}})\\
&&\quad=\mbox{}\hspace{0.5cm}\sum [a_\lambda b_{(1)}]\otimes b_{(2)}
+\sum b_{(1)}\otimes[a_\lambda b_{(2)}]+\sum [a_\lambda (\partial\overline{b_{(1)}})]\otimes \overline{b_{(2)}}
+\sum \partial\overline{b_{(1)}}\otimes [a_\lambda \overline{b_{(2)}}]\\
&&\mbox{}\hspace{0.7cm}-\sum [a_\lambda \overline{b_{(2)}}]\otimes \partial\overline{b_{(1)}}
-\sum \overline{b_{(2)}}\otimes [a_\lambda \partial\overline{b_{(1)}}]
-\sum [b_{-\lambda-\partial^{\otimes^2}} a_{(1)}]\otimes a_{(2)}\\
&&\mbox{}\hspace{0.7cm}-\sum a_{(1)}\otimes [b_{-\lambda-\partial^{\otimes^2}}a_{(2)}]
-\sum [b_{-\lambda-\partial^{\otimes^2}}\partial \overline{a_{(1)}} ]\otimes \overline{a_{(2)}} -\sum \partial \overline{a_{(1)}} \otimes [b_{-\lambda-\partial^{\otimes^2}} \overline{a_{(2)}}]\\
&&\mbox{}\hspace{0.7cm}+\sum [b_{-\lambda-\partial^{\otimes^2}} \overline{a_{(2)}}]\otimes \partial\overline{a_{(1)}}
+\sum \overline{a_{(2)}}\otimes [b_{-\lambda-\partial^{\otimes^2}}\partial \overline{a_{(1)}}]\\
&&\quad=\mbox{}\hspace{0.5cm} \sum (\partial(b_{(1)}\circ a)+\lambda(a\star b_{(1)})+[a,b_{(1)}])\otimes b_{(2)}\\
&&\mbox{}\hspace{0.7cm} +\sum b_{(1)}\otimes (\partial(b_{(2)}\circ a)+\lambda(a\star b_{(2)})+[a, b_{(2)}])\\
&&\mbox{}\hspace{0.7cm}+\sum (\lambda+\partial\otimes \id)(\partial(\overline{b_{(1)}}\circ a)+\lambda(\overline{b_{(1)}}\star a)+[a,\overline{b_{(1)}}])
\otimes \overline{b_{(2)}}\\
&&\mbox{}\hspace{0.7cm}+\sum \partial \overline{b_{(1)}}\otimes (\partial(\overline{b_{(2)}}\circ a)+\lambda(\overline{b_{(2)}}\star a)
+[a,\overline{b_{(2)}}])\\
&&\mbox{}\hspace{0.7cm}-\sum (\partial(\overline{b_{(2)}}\circ a)+\lambda(\overline{b_{(2)}}\star a)+[a,\overline{b_{(2)}}])\otimes \partial\overline{b_{(1)}}\\
&&\mbox{}\hspace{0.7cm}-\sum \overline{b_{(2)}}\otimes (\lambda+\partial)(\partial(\overline{b_{(1)}}\circ a)+\lambda(\overline{b_{(1)}}\star a)+[a,\overline{b_{(1)}}])\\
&&\mbox{}\hspace{0.7cm}-\sum (\partial (a_{(1)}\circ b)+(-\lambda-\partial^{\otimes^2})(a_{(1)}\star b)+[b, a_{(1)}])\otimes a_{(2)}\\
&&\mbox{}\hspace{0.7cm}-\sum a_{(1)}\otimes (\partial(a_{(2)}\circ b)+(-\lambda-\partial^{\otimes^2})(a_{(2)}\star b)+[b, a_{(2)}])\\
&&\mbox{}\hspace{0.7cm}-\sum(-\lambda-\id\otimes \partial)(\partial(\overline{ a_{(1)}}\circ b)+(-\lambda-\partial^{\otimes^2})(\overline{a_{(1)}}\star b)
+[b,\overline{a_{(1)}}])\otimes \overline{a_{(2)}}\\
&&\mbox{}\hspace{0.7cm}-\sum \partial\overline{a_{(1)}} \otimes(\partial (\overline{a_{(2)}}\circ b)
+(-\lambda-\partial^{\otimes^2})(\overline{a_{(2)}}\star b)+[b,\overline{a_{(2)}}])\\
&&\mbox{}\hspace{0.7cm}+\sum (\partial(\overline{a_{(2)}}\circ b)+(-\lambda-\partial^{\otimes^2})(\overline{a_{(2)}}\star b)+[b,\overline{a_{(2)}}])\otimes \partial \overline{a_{(1)}}\\
&&\mbox{}\hspace{0.7cm}+\sum  (-\lambda-\partial\otimes \id)(\overline{a_{(2)}}\otimes (\partial( \overline{ a_{(1)}}\circ b)+(-\lambda-\partial^{\otimes^2})
(\overline{ a_{(1)}}\star b)+[b,\overline{ a_{(1)}}])),
\end{eqnarray*}
and
\vspc
\begin{eqnarray*}
\delta([a_\lambda b]&=& \delta(\partial(b\circ a)+\lambda(b\star a)+[a,b])\\
&=&\partial^{\otimes^2}\delta(b\circ a)+\lambda \delta(b\star a)+\delta([a,b])\\
&=& \partial^{\otimes^2}(\delta_0(b\circ a)+(\partial\otimes \id)\Delta(b\circ a)-\tau(\partial\otimes \id)\Delta(b\circ a))\\
&&+\lambda(\delta_0(b\star a)+(\partial\otimes \id)\Delta(b\star a)-\tau(\partial\otimes \id)\Delta(b\star a))\\
&&+\delta_0([a,b])+(\partial\otimes \id)\Delta([a,b])-\tau(\partial\otimes \id)\Delta([a,b]).
\end{eqnarray*}

Since $\delta=-\tau \delta$, we only need to consider the coefficients of $A\otimes A$, $\partial A\otimes A$, $\partial^2 A\otimes A$, $\partial A\otimes \partial A$, $\lambda A\otimes A$, $\lambda(\partial A\otimes A)$ and $\lambda^2 A\otimes A$. By comparing the coefficients of those terms in these vector spaces respectively, we obtain that the equation
\vspb
$$a_\lambda \delta(b)-b_{-\lambda-\partial}\delta(a)=\delta([a_\lambda b]), \;\;a, b\in A,$$
holds if and only if the following equations hold 
\begin{eqnarray}
\mlabel{lbc1}\;\;&&\delta_0([a,b])=a.\delta_0(b)-b.\delta_0(a),\\
\mlabel{lbc2}&&\delta_0(b\circ a)+\Delta([a,b])=\sum(b_{(1)}\circ a\otimes b_{(2)})+\sum [a,\overline{ b_{(1)}}]\otimes \overline{ b_{(2)}}
+\sum \overline{ b_{(1)}}\otimes [a,\overline{ b_{(2)}}]\\
&&\;\;\;\;+\sum (b\circ a_{(1)})\otimes a_{(2)}+\sum a_{(1)}\otimes a_{(2)}\star b-\sum \overline{ a_{(1)}}\otimes [b,\overline{ a_{(2)}}]
-\sum \overline{ a_{(2)}}\otimes [b,\overline{ a_{(1)}}],\nonumber\\
\mlabel{lbc3}&&\Delta(b\circ a)=\sum \overline{ b_{(1)}}\circ a\otimes \overline{ b_{(2)}}+\overline{ a_{(1)}}\otimes (\overline{ a_{(2)}}\star b)+\sum \overline{ a_{(2)}}\otimes
\overline{ a_{(1)}}\star b,\\
\mlabel{lbc4}&&\Delta(b\circ a)-\tau\Delta(b\circ a)=\sum \overline{ b_{(1)}}\otimes \overline{ b_{(2)}}\circ a-\sum \overline{ b_{(2)}}\circ a \otimes \overline{ b_{(1)}}
-\sum (b\circ \overline{ a_{(1)}})\otimes \overline{ a_{(2)}}\\
&&\;\;\;\;+\sum \overline{ a_{(1)}}\otimes b\circ \overline{ a_{(2)}}-\sum (b\circ \overline{ a_{(2)}})\otimes \overline{ a_{(1)}}+\sum \overline{ a_{(2)}}\otimes b\circ \overline{ a_{(1)}},\nonumber\\
\mlabel{lbc5}&&\delta_0(b\star a)=\sum (a\star b_{(1)})\otimes b_{(2)}+\sum b_{(1)}\otimes (a\star b_{(2)})+\sum [a,\overline{ b_{(1)}}]\otimes \overline{ b_{(2)}}\\
&&\;\;\;\;-\sum \overline{ b_{(2)}}\otimes [a,\overline{ b_{(1)}}]+\sum a_{(1)}\star b \otimes a_{(2)}+\sum a_{(1)}\otimes (a_{(2)}\star b)\nonumber\\
&&\;\;\;\;+\sum [b,\overline{ a_{(1)}}]\otimes \overline{ a_{(2)}}-\sum \overline{ a_{(2)}}\otimes [b,\overline{ a_{(1)}}],\nonumber\\
\mlabel{lbc6}&&\Delta(b\star a)=\sum \overline{ b_{(1)}}\circ a\otimes \overline{ b_{(2)}}+\sum (\overline{ b_{(1)}}\star a)\otimes \overline{ b_{(2)}}
+\sum \overline{ b_{(1)}}\otimes \overline{ b_{(2)}}\star a\\
&&\;\;\;\;+\sum \overline{ a_{(1)}}\circ b \otimes \overline{ a_{(2)}}-\sum \overline{a_{(1)}}\star b \otimes \overline{ a_{(2)}}+\sum \overline{ a_{(1)}}\otimes \overline{ a_{(2)}}\star b\nonumber\\
&&\;\;\;\;+\sum \overline{ a_{(2)}}\otimes \overline{ a_{(1)}}\star b+\overline{ a_{(2)}}\otimes \overline{ a_{(1)}}\star b,\nonumber\\
&&\mlabel{lbc7}\sum(\overline{ b_{(1)}}\star a)\otimes \overline{ b_{(2)}}-\sum \overline{ b_{(2)}}\otimes \overline{ b_{(1)}}\star a=\sum(\overline{ a_{(1)}}\star b)\otimes \overline{ a_{(2)}}-\sum \overline{ a_{(2)}}\otimes \overline{ a_{(1)}}\star b,
\end{eqnarray}
for all $a, b\in A$. It is straightforward to show
that Eqs. (\mref{lbc3}), (\mref{lbc4}), (\mref{lbc6}) and
(\mref{lbc7}) hold if and only if Eqs. (\mref{Lb5})-(\mref{Lb7})
hold, and Eqs. (\mref{lbc2}) and (\mref{lbc5}) hold if and only if
Eq. (\mref{Lb4}) holds. This completes the proof. \vspb
\end{proof}

Theorem \mref{thm1-corr} has the following direct consequence.
\vspb
\begin{cor}
\mlabel{consthm1}
Let $R={\bf k}[\partial]A$ be the Lie conformal algebra corresponding to a Novikov algebra $(A, \circ)$. Let $\Delta: A\rightarrow  A\otimes A$ be a linear map.
Then $(R, [\cdot_\lambda \cdot], \delta)$ is a Lie conformal bialgebra with the ${\bf k}[\partial]$-module homomorphism $\delta: R\rightarrow R\otimes R$ given by
\vspb
\begin{eqnarray}
\delta(a)=(\partial\otimes \id)\Delta(a)-\tau (\partial\otimes \id)\Delta(a),\;\;\;\;a\in A,
\end{eqnarray}
if and only if $(A, \circ, \Delta)$ is a Novikov bialgebra.
\end{cor}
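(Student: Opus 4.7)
The plan is to deduce this corollary directly from Theorem~\mref{thm1-corr} by specializing to the case in which the Lie bracket and Lie cobracket on $A$ are trivial, i.e.\ $[\cdot,\cdot]=0$ and $\delta_0=0$. As already observed in the remark following the definition of a \gdba, a Novikov bialgebra is exactly a \gdba whose Lie bracket and Lie cobracket are zero; so all I need to do is match up each ingredient appearing in the conclusion of Theorem~\mref{thm1-corr} with its Novikov counterpart.

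First I would check that the underlying (co)algebra structures coincide in this degenerate setting: with $[\cdot,\cdot]=0$, the compatibility axiom in the definition of a \gda is vacuous, so $(A,\circ,0)$ is a \gda if and only if $(A,\circ)$ is a Novikov algebra; dually, with $\delta_0=0$, the GD coalgebra compatibility Eq.~\meqref{Lc2} is vacuous and the Lie coalgebra axioms are automatic, so $(A,\Delta,0)$ is a GD coalgebra if and only if $(A,\Delta)$ is a Novikov coalgebra. Second, substituting $\delta_0=0$ into Eq.~\meqref{cobracket} returns exactly the formula $\delta(a)=(\partial\otimes\id)\Delta(a)-\tau(\partial\otimes\id)\Delta(a)$ appearing in the statement. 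Third, I would verify that the bialgebra compatibility Eq.~\meqref{Lb4} becomes tautological: each of its terms carries one of $[\cdot,\cdot]$, $\ad(b)$, the dot action $a.(\cdot)$, or the value of $\delta_0$, all of which vanish, so both sides collapse to $0$.

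Combining these three observations, the equivalence supplied by Theorem~\mref{thm1-corr} restricts exactly to the equivalence asserted in the corollary. I do not foresee a genuine obstacle, since by the design of the \gdba definition the Novikov subcase is already built in; the only nontrivial bookkeeping is the term-by-term inspection in the third step, showing that the \gdba compatibility Eq.~\meqref{Lb4} degenerates to $0=0$ when $[\cdot,\cdot]$ and $\delta_0$ are trivial, which is immediate.
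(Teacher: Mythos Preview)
Your argument is correct and follows exactly the route the paper intends: the corollary is stated there simply as ``a direct consequence'' of Theorem~\mref{thm1-corr}, and you have spelled out that consequence by specializing to $[\cdot,\cdot]=0$, $\delta_0=0$ and invoking the remark that a Novikov bialgebra is precisely a \gdba with trivial Lie bracket and Lie cobracket. Your three checks (that the GD (co)algebra conditions reduce to the Novikov ones, that Eq.~\meqref{cobracket} reduces to the displayed formula, and that Eq.~\meqref{Lb4} becomes $0=0$) are exactly the bookkeeping needed.
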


We end the section with several examples of Lie conformal bialgebras corresponding to \gdbas by Theorem \mref{thm1-corr}.
\begin{ex}
The Lie conformal bialgebra $(R={\bf k}[\partial]A={\bf
k}[\partial]e_1\oplus {\bf k}[\partial]e_2, [\cdot_\lambda \cdot],
\delta)$ corresponding to the \gdba $(A, \circ, \delta_0)$ in
Example \mref{ex-GD} is given as follows.
\begin{eqnarray*}
&&[{e_1}_\lambda e_1]=[{e_2}_\lambda e_2]=0,\;\;\;[{e_1}_\lambda e_2]=\lambda e_2,\\
&&\delta(e_1)=0, \;\;\; \delta(e_2)=e_1\otimes e_2-e_2\otimes e_1.
\end{eqnarray*}

Similarly, the Lie
conformal bialgebra $(R={\bf k}[\partial]L={\bf
k}[\partial]e_1\oplus {\bf k}[\partial]e_2, [\cdot_\lambda \cdot],
\delta)$ corresponding to the \gdba $(L, [\cdot,\cdot], \Delta)$
in Example \mref{ex-GD} is given as follows. 
\begin{eqnarray*}
&&[{e_1}_\lambda e_1]=[{e_2}_\lambda e_2]=0,\;\;\;[{e_1}_\lambda e_2]=e_2,\\
&&\delta(e_1)=0, \;\;\; \delta(e_2)=\partial e_1\otimes e_2-e_2\otimes \partial e_1.
\end{eqnarray*}
\vspb
\end{ex}
We also give examples of Lie conformal bialgebras from Novikov bialgebras.
\begin{ex}
Let $(A={\bf k}e_1\oplus {\bf k}e_2, \circ, \Delta)$ be the Novikov bialgebra from  \mcite{HBG}, defined by
\vspb
\begin{eqnarray*}
&&e_1\circ e_1=e_1,\;\;e_2\circ e_1=e_2,\;\;e_1\circ e_2=e_2\circ e_2=0,\;\;\Delta(e_1)=e_2\otimes e_2, \;\; \Delta(e_2)=0.
\end{eqnarray*}
Then by Corollary \mref{consthm1}, there is a Lie conformal bialgebra structure on
$R={\bf k}[\partial]A$ given by
\begin{eqnarray*}
&&[{e_1}_\lambda e_1]=(\partial+2\lambda)e_1,\;\;[{e_1}_\lambda e_2]=(\partial+\lambda)e_2,\;\;[{e_2}_\lambda e_2]=0,\\
&&\delta(e_1)=\partial e_2\otimes e_2-e_2\otimes \partial e_2,\;\; \delta(e_2)=0.
\end{eqnarray*}
\vspe
\end{ex}

\begin{ex}
Let $(A={\bf k}[x],\circ, \Delta)$ be the Novikov bialgebra given in \mcite{HBG1}, where
\vspb
\begin{eqnarray*}
&&x^m\circ x^n=nx^{m+n-1},\;\;\Delta(1)=\Delta(x)=0,\;\;\Delta(x^n)=-\sum_{i=1}^{n-1}ix^{n-1-i}\otimes x^{i-1}, \;\;\; n\geq 2.
\vspb
\end{eqnarray*}
By Corollary \mref{consthm1}, there is a Lie conformal bialgebra structure on
$R={\bf k}[\partial][x]$ given by
\vspb
\begin{eqnarray*}
&&[{x^m}_\lambda x^n]=(m\partial+(m+n)\lambda)x^{m+n-1},\\
&&\delta (1)=\delta(x)=0,\;\;\delta(x^n)=-\sum_{i=1}^{n-1} i(\partial x^{n-1-i}\otimes x^{i-1}-x^{i-1}\otimes \partial x^{n-1-i}), \;\; n\geq 2.
\end{eqnarray*}
\vspd
\end{ex}

\vspd

\section{Gel'fand-Dorfman Yang-Baxter equation and the relationship with classical conformal Yang-Baxter equation}
\mlabel{s:ybe} In this section, we investigate a special class of
\gdbas which motivates us to introduce the notion of the
Gel'fand-Dorfman Yang-Baxter equation (GDYBE). The operator form
of the GDYBE is also studied. We show that an
$\mathcal{O}$-operator on a \gda associated to a representation
produces a skew-symmetric solution of the GDYBE. It is also shown
that a pre-\gda naturally induces an  $\mathcal{O}$-operator of
the associated \gda and therefore a \gdba. Moreover, the
relationship between GDYBE and classical conformal
Yang-Baxter equation (CCYBE) is investigated. Finally, we
show that there is a natural construction of Lie conformal
bialgebras from pre-\gdas.

\subsection{GDYBE, $\mathcal{O}$-operators and pre-\gdas}
First, we recall some facts about the Novikov Yang-Baxter equation and classical Yang-Baxter equation.

Let $(A, \circ, [\cdot,\cdot])$ be a \gda. Let $r=\sum_\alpha x_\alpha \otimes y_\alpha \in A\otimes A$ and
$r'=\sum_\beta x_\beta '\otimes y_\beta '\in A\otimes A$. Set
$$r_{12}\circ r'_{13}:=\sum_{\alpha,\beta}x_\alpha \circ x_\beta'\otimes y_\alpha\otimes
y_\beta',\;\; r_{12}\circ r'_{23}:=\sum_{\alpha,\beta} x_\alpha\otimes y_\alpha\circ
x_\beta'\otimes y_\beta',\;\;r_{13}\circ r'_{23}:=\sum_{\alpha,\beta} x_\alpha\otimes
x_\beta'\otimes y_\alpha\circ y_\beta',$$
$$r_{13}\circ r'_{12}:=\sum_{\alpha,\beta}x_\alpha\circ x_\beta'\otimes y_\beta'\otimes
y_\alpha,\;\;r_{23}\circ r'_{13}:=\sum_{\alpha,\beta} x_\beta'\otimes x_\alpha\otimes
y_\alpha\circ y_\beta',$$
$$r_{12}\star r'_{23}:=\sum_{\alpha,\beta} x_\alpha\otimes y_\alpha\star
x_\beta'\otimes y_\beta',\;\;r_{13}\star r'_{23}:=\sum_{\alpha,\beta} x_\alpha\otimes
x_\beta'\otimes y_\alpha\star y_\beta',$$
$$[r_{12}, r'_{13}]:=\sum_{\alpha,\beta} [x_\alpha, x_\beta']\otimes y_\alpha\otimes y_\beta',\;[r_{12}, r'_{23}]:=\sum_{\alpha,\beta} x_\alpha\otimes [ y_\alpha,
x_\beta']\otimes y_\beta',\;[r_{13}, r'_{23}]:=\sum_{\alpha,\beta} x_\alpha \otimes  x_\beta'\otimes[ y_\alpha, y_\beta'].$$
The equation
\vspc
\begin{eqnarray}
{\bf N}(r)\coloneqq r_{13}\circ r_{23} +r_{12}\star r_{23}+r_{13}\circ
r_{12}=0
\end{eqnarray}
is called the {\bf Novikov Yang-Baxter equation (NYBE)} in $(A, \circ)$ (see \mcite{HBG}), and the equation
\vspb
\begin{eqnarray}
{\bf C}(r):=  [r_{12}, r_{13}] +[r_{12}, r_{23}]+[r_{13},r_{23}]=0
\end{eqnarray}
is called the {\bf classical Yang-Baxter equation (CYBE)} in $(A, [\cdot,\cdot])$ (see \mcite{CP}).

\begin{pro}\mlabel{cob-Nov}
Let $(A,\circ)$ be a Novikov algebra and $r\in A\otimes A$. Define
\begin{eqnarray}
\Delta_r: A\rightarrow A\otimes A, \quad
\Delta_r(a)\coloneqq -(L_{\circ}(a)\otimes \id+\id\otimes L_{\star}(a))r,\;\;\; a\in A.
\mlabel{coNov}
\end{eqnarray} Then
 $(A,\circ, \Delta_r)$ is a Novikov bialgebra if and
only if the following conditions hold.
\begin{eqnarray}
\mlabel{cob1}&&\;\;\;(\id\otimes (L_{\circ}(b\circ a)+L_{\circ}(a)L_{\circ}(b))+L_{\star}(a)\otimes L_{\star}(b))(r+\tau r)=0,\\
\mlabel{cob2}
&&\;\;\;(L_{\star}(a)\otimes L_{\star}(b)-L_{\star}(b)\otimes L_{\star}(a))(r+\tau r)=0,\\
\mlabel{cob3}
&&\;\;\;\Big(-L_{\star}(b)\otimes R_{\circ}(a)+L_{\star}(a)\otimes R_{\circ}(b)+R_{\circ}(a)\otimes L_{\circ}(b)-R_A(b)\otimes L_{\circ}(a)\\
&&\;\;\;\;\;+\id\otimes \big(L_{\circ}(a)L_{\circ}(b)-L_{\circ}(b)L_\circ(a)\big)-\big(L_\circ(a)L_\circ(b)-L_\circ(b)L_\circ(a)\big)\otimes \id\Big)(r+\tau r)=0,\nonumber\\
&&\mlabel{cob4}\;\;\;\;\Big(L_\circ(a)\otimes \id\otimes \id-\id\otimes L_\circ(a)\otimes \id\Big)\Big((\tau r)_{12}\circ r_{13}+r_{12}\circ r_{23}+r_{13}\star r_{23}\Big)\\
&&\hspace{0.3cm}+\Big((\id\otimes L_\circ(a)\otimes
\id)(r+\tau r)_{12}\Big)\circ r_{23}-\Big((L_\circ(a)\otimes \id\otimes \id)r_{13}\Big)\circ (r+\tau r)_{12}+\Big(\id\otimes \id\otimes
L_{\star}(a)\Big)\nonumber\\
&&\hspace{0.3cm}\Big(r_{23}\circ r_{13}-r_{13}\circ
r_{23}-(\id\otimes\id\otimes \id-\tau\otimes \id)(r_{13}\circ
r_{12}+r_{12}\star r_{23})\Big)=0,\nonumber\\
&&\;\;\;\mlabel{cob5}(\id\otimes \id\otimes \id-\id\otimes \tau)(\id\otimes \id\otimes
            L_{\star}(a))(r_{13}\circ (\tau r)_{23} -r_{12}\star
            r_{23}-r_{13}\circ r_{12})=0,\;\;a, b\in A.
\end{eqnarray}
\end{pro}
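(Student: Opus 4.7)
The plan is to substitute the prescribed coproduct $\Delta_r$ directly into each of the defining conditions of a Novikov bialgebra and verify that the resulting identities amount precisely to Eqs. \eqref{cob1}--\eqref{cob5}. A Novikov bialgebra structure on $(A,\circ,\Delta_r)$ requires two groups of axioms: the Novikov coalgebra axioms \eqref{Lc3} and \eqref{Lc4}, and the compatibility conditions \eqref{Lb5}, \eqref{Lb6}, \eqref{Lb7}. The five conditions \eqref{cob1}--\eqref{cob5} will arise one from each of these five axioms, so the strategy is a condition-by-condition matching.

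First I would expand $\Delta_r(a) = -\sum_\alpha (a\circ x_\alpha)\otimes y_\alpha - \sum_\alpha x_\alpha\otimes(a\star y_\alpha)$ and, as preliminary ingredients, compute $\tau\Delta_r(a)$ and the symmetrized object $\Delta_r(a)+\tau\Delta_r(a)$, the latter being expressible as $-(L_\circ(a)\otimes\id+\id\otimes L_\circ(a)+L_\star(a)\otimes\id+\id\otimes L_\star(a))(r+\tau r)/2$ after rearrangement. Next I would dispatch the three linear-in-$\Delta$ compatibility conditions. Substituting into Eq.~\eqref{Lb7}, which only involves the symmetrizations $\Delta(a)+\tau\Delta(a)$, forces the output to depend only on $r+\tau r$; using the Novikov identity \eqref{Nov} to collapse the mixed term $L_\circ(b\circ a)+L_\circ(a)L_\circ(b)$ yields exactly Eq.~\eqref{cob1}. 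A parallel substitution in the skew form \eqref{Lb6} produces Eq.~\eqref{cob2}, and the cocycle condition \eqref{Lb5}, after separating terms by tensor factor and invoking both \eqref{lef} and \eqref{Nov}, reduces to Eq.~\eqref{cob3}.

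The remaining two conditions \eqref{Lc3} and \eqref{Lc4} are the Novikov coalgebra axioms, and substituting $\Delta_r$ into them produces expressions that are \emph{quadratic} in $r$. Here one writes $(\id\otimes\Delta_r)\Delta_r(a)$ and $(\Delta_r\otimes\id)\Delta_r(a)$ as sums of tensor products $x_\alpha \otimes x_\beta \otimes y_\beta$, $x_\alpha \circ x_\beta \otimes y_\alpha \otimes y_\beta$, and the other Novikov tensor triples entering the NYBE, where operators such as $L_\circ(a)$ or $L_\star(a)$ act on a designated tensor slot. After expansion one regroups the terms so that the action of the operator factors out of collections of the form $r_{12}\circ r_{23}$, $r_{13}\circ r_{12}$, $r_{12}\star r_{23}$, and their $\tau$-twists. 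Systematically applying Eq.~\eqref{Nov} (to swap right multiplications) and Eq.~\eqref{lef} (to reorganize compositions of left multiplications), Eq.~\eqref{Lc3} is seen to be equivalent to Eq.~\eqref{cob4}, and Eq.~\eqref{Lc4} to Eq.~\eqref{cob5}.

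The main obstacle lies precisely in Step 3, namely the bookkeeping for the quadratic-in-$r$ expressions: each of $(\id\otimes\Delta_r)\Delta_r(a)$ and $(\Delta_r\otimes\id)\Delta_r(a)$ expands into eight tensor terms once $L_\star = L_\circ + R_\circ$ is resolved, so verifying Eq.~\eqref{cob4} involves organizing on the order of thirty monomials into the NYBE-type clusters that appear inside the outer operator $L_\circ(a)\otimes\id\otimes\id - \id\otimes L_\circ(a)\otimes\id$, and similarly for Eq.~\eqref{cob5} with $\id\otimes\id\otimes L_\star(a)$. The linear conditions \eqref{cob1}--\eqref{cob3} are routine once the Novikov axioms are invoked; the genuine labor is in confirming that the quadratic reorganization in Step 3 is both well-defined and tight, so that no extra identities beyond Eqs.~\eqref{cob4} and \eqref{cob5} are required.
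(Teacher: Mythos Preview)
The paper's proof is much shorter than your plan: it simply observes that $(A,\circ,\Delta_r)$ is a Novikov bialgebra if and only if $(A,\circ,-\Delta_r)$ is one, and then invokes \cite[Theorem~3.22]{HBG} directly. In other words, the five conditions \eqref{cob1}--\eqref{cob5} are already the content of a theorem in the cited reference, stated there for the coproduct $+(L_\circ(a)\otimes\id+\id\otimes L_\star(a))r$ rather than its negative.

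Your approach---substituting $\Delta_r$ into each of the Novikov coalgebra axioms \eqref{Lc3}, \eqref{Lc4} and the compatibility conditions \eqref{Lb5}--\eqref{Lb7} and simplifying using the Novikov identities---is exactly the kind of direct verification that lies behind the cited theorem, so you are essentially reproducing that result from scratch. This is a legitimate and more self-contained route; the trade-off is that the paper's one-line citation replaces what you correctly anticipate to be a lengthy bookkeeping exercise, particularly for the quadratic-in-$r$ conditions \eqref{cob4} and \eqref{cob5}. One caution: your proposed one-to-one matching (e.g.\ \eqref{Lb7} $\leftrightarrow$ \eqref{cob1}, \eqref{Lb5} $\leftrightarrow$ \eqref{cob3}) should be checked carefully against the actual computation, since the operators appearing in \eqref{Lb7} are $R_\circ$ while those in \eqref{cob1} are $L_\circ$ and $L_\star$, so some of the linear axioms may combine or rearrange differently than you sketched.
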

\begin{proof}
Note that $(A, \circ, \Delta_r)$ is a Novikov bialgebra if and only if  $(A, \circ, -\Delta_r)$ is a Novikov bialgebra. Then this proposition follows from \cite[Theorem 3.22]{HBG} directly.
\end{proof}

\begin{pro}\mcite{CP}\mlabel{cob-Lie}
Let $(A, [\cdot,\cdot])$ be a Lie algebra and $r\in A\otimes A$. Define
\vspb
\begin{eqnarray}
\mlabel{coLie} \delta_r: A\rightarrow A\otimes A,
\quad \delta_r(a)\coloneqq (\ad(a)\otimes \id+\id\otimes
\ad(a))r,\;\;\; a\in A.
\end{eqnarray} Then
 $(A,[\cdot,\cdot], \delta_r)$ is a Lie bialgebra if and
only if the following conditions hold.
\vspb
\begin{eqnarray}
\mlabel{cob6}&&\Big(\ad(a)\otimes \id+\id\otimes \ad(a)\Big)(r+\tau r)=0,\\
\mlabel{cob7}&&\Big(\ad(a)\otimes \id\otimes \id+\id\otimes \ad(a)\otimes \id+\id\otimes \id\otimes \ad(a)\Big){\bf C}(r)=0, \;\;a\in A.
\end{eqnarray}
Such a Lie bialgebra is called a {\bf coboundary Lie bialgebra}.
\end{pro}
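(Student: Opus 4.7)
The plan is to verify separately the three defining axioms of a Lie bialgebra on $(A, [\cdot,\cdot], \delta_r)$: the cocycle compatibility between the bracket and the cobracket, the anticocommutativity of $\delta_r$, and the co-Jacobi identity for $\delta_r$. Write $a.u := (\ad(a)\otimes \id + \id\otimes \ad(a))u$ for the diagonal adjoint action of $A$ on $A\otimes A$, extended in the evident way to $A^{\otimes 3}$, so that by definition $\delta_r(a) = a.r$. With this notation, each axiom reduces to a statement about how $r$ transforms under $\tau$ and under iterated applications of the diagonal action.

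First, the cocycle condition is automatic and imposes no constraint on $r$. Indeed, the diagonal action makes $A\otimes A$ into a left $A$-module, so $[a,b].r = a.(b.r) - b.(a.r)$, which reads exactly as $\delta_r([a,b]) = a.\delta_r(b) - b.\delta_r(a)$. Next, a direct calculation gives
\[
\delta_r(a) + \tau\delta_r(a) = (\ad(a)\otimes \id + \id\otimes \ad(a))(r + \tau r),
\]
so anticocommutativity $\delta_r(a) = -\tau\delta_r(a)$ holds for all $a\in A$ if and only if Equation~\meqref{cob6} holds.

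The main step is the co-Jacobi identity $(\id\otimes \delta_r)\delta_r(a) - (\tau\otimes \id)(\id\otimes \delta_r)\delta_r(a) = (\delta_r\otimes \id)\delta_r(a)$, which must be shown equivalent to Equation~\meqref{cob7} in the presence of \meqref{cob6}. Expanding both sides with $r = \sum_\alpha x_\alpha \otimes y_\alpha$ and using the Jacobi identity in $A$ repeatedly to rewrite nested brackets via $[a,[x,y]] = [[a,x],y] + [x,[a,y]]$, the coJacobiator of $\delta_r$ at $a$ can be identified with the diagonal action of $a$ on $\mathbf{C}(r)$, namely
\[
(\ad(a)\otimes \id\otimes \id + \id\otimes \ad(a)\otimes \id + \id\otimes \id\otimes \ad(a))\mathbf{C}(r).
\]
The combinatorial bookkeeping of the terms in $A^{\otimes 3}$ is the chief obstacle; I would organize it by matching, for each of the three summands of $\mathbf{C}(r)$, the action on each tensor factor against the terms produced by the coJacobiator, so that the cancellations become systematic. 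Invoking anticocommutativity to symmetrize the remaining contributions, the co-Jacobi identity then becomes precisely \meqref{cob7}, which completes the equivalence.
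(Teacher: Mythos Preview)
The paper does not supply its own proof of this proposition: it is quoted as a classical result from \mcite{CP}, and no argument appears in the text. Your outline follows the standard route used in the literature and is correct in substance. The cocycle condition is indeed automatic from the module structure on $A\otimes A$, and your identification $\delta_r(a)+\tau\delta_r(a)=a.(r+\tau r)$ is exactly right for the anticocommutativity step.

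For the co-Jacobi step, your description is accurate but compressed: the identity
\[
(\id\otimes\delta_r)\delta_r(a)-(\tau\otimes\id)(\id\otimes\delta_r)\delta_r(a)-(\delta_r\otimes\id)\delta_r(a)
= -\,a.\mathbf{C}(r)
\]
does not hold on the nose for arbitrary $r$; extra terms involving the symmetric part $r+\tau r$ appear in the expansion and must be killed using Eq.~\meqref{cob6}. You allude to this when you write ``invoking anticocommutativity to symmetrize the remaining contributions,'' but it would strengthen the write-up to make explicit that \meqref{cob6} is used at this point, not merely as a separate axiom but as an input to the co-Jacobi reduction. With that caveat, the argument is the standard one and matches what a reader would find in \mcite{CP}.
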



\begin{thm}\mlabel{thm-cob-GD}
Let $(A, \circ, [\cdot,\cdot])$ be a \gda and $r=\sum_\alpha
x_\alpha\otimes y_\alpha\in A\otimes A$. Let $\Delta_r$ and
$\delta_r$ be the linear maps defined by Eqs. \meqref{coNov} and
\meqref{coLie} respectively. Then $(A, \circ, [\cdot,\cdot],
\Delta_r, \delta_r)$ is a \gdba if and only if Eqs.
\meqref{cob1}-\meqref{cob5}, Eqs. \meqref{cob6}-\meqref{cob7} and
the following equations hold. \vspb
\begin{eqnarray}
&&\mlabel{co-cond1}\Big(L_{\circ}(a)\otimes \id\otimes \id+\id\otimes L_{\star}(a)\otimes \id+\id\otimes \id\otimes L_{\star}(a)\Big){\bf C}(r)\\
&&\qquad-(\id\otimes L_{\star}(a)\otimes \id)[r_{13}, (r+\tau r)_{23}]-\sum_\beta\Big((R_{\circ}(x_\beta)\circ \ad(a))\otimes \id\Big)(r+\tau r)\otimes y_\beta\nonumber\\
\vspb
&&\qquad-(\id\otimes \ad(a)\otimes\id)\big(r_{12}\circ r_{23}+\tau(r)_{12}\circ r_{13}+r_{13}\star r_{23}\big)-(\id\otimes \id\otimes \ad(a)){\bf N}(r)=0,\nonumber\\
\vspb
&&\mlabel{co-cond2}(L_{\star}(a)\otimes \id)(\id\otimes \ad(b))(r+\tau r)+\Big(\id\otimes (\ad(b)\circ L_{\circ}(a))\Big)(r+\tau r)=0, \ a, b\in A.
\end{eqnarray}
\end{thm}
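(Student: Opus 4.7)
The plan is to reduce the theorem to verifying only the two cross-compatibilities in the definition of a \gdba that are not already encoded by the Novikov and Lie bialgebra conditions. By Proposition~\mref{cob-Nov}, Eqs.~\meqref{cob1}--\meqref{cob5} are equivalent to $(A,\circ,\Delta_r)$ being a Novikov bialgebra, and by Proposition~\mref{cob-Lie}, Eqs.~\meqref{cob6}--\meqref{cob7} are equivalent to $(A,[\cdot,\cdot],\delta_r)$ being a Lie bialgebra. Granting both, $(A,\Delta_r)$ is automatically a Novikov coalgebra, $(A,\delta_r)$ is a Lie coalgebra, and the bialgebra relations \meqref{Lb5}--\meqref{Lb7} hold along with the Lie bialgebra cocycle condition. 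What then remains to characterize is exactly (i) the GD-coalgebra compatibility \meqref{Lc2} linking $\Delta_r$ and $\delta_r$, and (ii) the GDBA compatibility \meqref{Lb4}.

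My approach is to substitute the coboundary formulas
\begin{equation*}
\Delta_r(a)=-(L_{\circ}(a)\otimes\id+\id\otimes L_{\star}(a))r, \qquad \delta_r(a)=(\ad(a)\otimes\id+\id\otimes\ad(a))r
\end{equation*}
into each of (i) and (ii), expand in $A^{\otimes 2}$ or $A^{\otimes 3}$, and regroup. For (i), both sides become expressions linear in $r$ and $\tau r$ with coefficients built from $L_{\circ}$, $L_{\star}$ and $\ad$. Using the GD-algebra compatibility to commute $\ad(b)$ past $L_{\circ}(a)$, and noting that the $(r-\tau r)$-contributions cancel by Lie skew-symmetry, the identity collapses to a factorized equation acting on $r+\tau r$, which is precisely Eq.~\meqref{co-cond2}.

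For (ii) the substitution produces two types of terms. The \emph{linear-in-$r$} terms combine into expressions of the shape $(M\otimes\id+\id\otimes N)(r+\tau r)$; these are controlled by \meqref{cob1}--\meqref{cob7} together with the GD-algebra axiom, so they drop out modulo the already-derived equations. The \emph{quadratic-in-$r$} terms, which arise whenever an $a$- or $b$-action in $\Delta_r([a,b])$ or $\delta_r(b\circ a)$ is absorbed by a factor of $r$, reorganize into the standard Yang--Baxter building blocks: the Novikov block ${\bf N}(r)$, the Lie block ${\bf C}(r)$, and mixed contractions such as $r_{12}\circ r_{23}$, $\tau(r)_{12}\circ r_{13}$, $r_{13}\star r_{23}$ and $[r_{13},(r+\tau r)_{23}]$, acted on by $L_{\star}(a)$, $\ad(a)$ or $L_{\circ}(a)$ at the appropriate tensor slot. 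After this regrouping the residual identity is exactly Eq.~\meqref{co-cond1}. The reverse direction is then immediate by reading the same computations backward.

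The main obstacle will be the tensor bookkeeping in reducing \meqref{Lb4}: expanding the two sides generates a large number of terms in $A^{\otimes 3}$, and one must apply the GD-algebra compatibility at precisely the correct tensor positions to trade $\ad$-of-a-Novikov-product for $\ad$-acting-on-factors, so that the linear-in-$r$ contributions assemble into combinations already constrained by \meqref{cob1}--\meqref{cob7} and the quadratic-in-$r$ contributions assemble cleanly into the ${\bf N}(r)$, ${\bf C}(r)$, and mixed blocks displayed in \meqref{co-cond1}. Once this careful grouping is carried out, the equivalence stated in the theorem follows termwise.
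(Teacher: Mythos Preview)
Your reduction via Propositions~\mref{cob-Nov} and \mref{cob-Lie} is correct, and so is the identification of the two residual conditions: the GD-coalgebra compatibility \meqref{Lc2} and the GD-bialgebra compatibility \meqref{Lb4}. But you have the two correspondences swapped, and this is not a harmless relabeling.

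Condition \meqref{Lc2} involves expressions such as $(\id\otimes\delta_r)\Delta_r(a)$ and $(\Delta_r\otimes\id)\delta_r(a)$: each of these applies one coboundary map and then the other, so the output is \emph{quadratic} in $r$ and lives in $A^{\otimes 3}$. It is this computation that produces the Yang--Baxter blocks ${\bf C}(r)$, ${\bf N}(r)$, $r_{12}\circ r_{23}$, $\tau(r)_{12}\circ r_{13}$, $r_{13}\star r_{23}$, $[r_{13},(r+\tau r)_{23}]$, and hence \meqref{co-cond1}. Conversely, condition \meqref{Lb4} contains only single applications of $\Delta_r$ or $\delta_r$ (e.g.\ $\delta_r(b\circ a)$, $\Delta_r([a,b])$, $(R_\circ(a)\otimes\id)\delta_r(b)$, etc.), so it is \emph{linear} in $r$ and lives in $A^{\otimes 2}$; after using the GD-algebra identity it reduces to \meqref{co-cond2}. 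Your description of (i) as ``linear in $r$ and $\tau r$'' and of (ii) as producing ``quadratic-in-$r$ terms'' cannot be right for the conditions as labeled.

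A second point: in the paper's argument each equivalence \meqref{Lc2}$\Leftrightarrow$\meqref{co-cond1} and \meqref{Lb4}$\Leftrightarrow$\meqref{co-cond2} is established \emph{directly}, using only the GD-algebra axioms to rewrite products like $[a\circ x_\alpha,x_\beta]$ or $[b\circ a,y_\alpha]$. No appeal to \meqref{cob1}--\meqref{cob7} is needed to make terms ``drop out''. Your plan to cancel linear-in-$r$ contributions in (ii) ``modulo the already-derived equations'' is therefore unnecessary (and, given that \meqref{Lb4} has no quadratic part, there is nothing left over to assemble into \meqref{co-cond1} anyway). Once you reverse the pairing and carry out the two straightforward expansions, the proof goes through as in the paper.
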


\begin{proof}
By Propositions \mref{cob-Nov} and \mref{cob-Lie}, we only need to
prove that Eqs. (\mref{Lc2}) and (\mref{Lb4}) hold if and only if
Eqs. (\mref{co-cond1}) and (\mref{co-cond2}) hold respectively
when $\Delta=\Delta_r$ and $\delta=\delta_r$. Then for $a\in A$, we have
{\small\begin{eqnarray*}
&&(\id\otimes \delta_r)\Delta_r(a)-(\tau\otimes \id)(\id\otimes\Delta_r)\delta_r(a)+(\tau\otimes \id)(\id\otimes \delta_r)\tau \Delta_r(a)-(\Delta_r\otimes \id)\delta_r(a)-(\delta_r\otimes \id)\Delta_r(a)\\
&&\mbox{}\hspace{0.5cm} = -\sum_{\alpha,\beta}\Big(\big([a\circ x_\alpha,x_\beta]-[a,x_\beta]\circ x_\alpha-[a\circ x_\beta,x_\alpha]\big)\otimes y_\alpha\otimes y_\beta\\
&&\quad\quad\quad+x_\alpha\otimes\big([a\star y_\alpha, x_\beta]-[a,x_\beta]\star y_\alpha-[a\circ x_\beta,y_\alpha]\big)\otimes y_\beta\\
&&\quad\quad\quad+x_\alpha\otimes x_\beta\otimes \big([a\star y_\alpha,y_\beta]-[a,y_\beta]\star y_\alpha\big)+a\circ x_\alpha \otimes [y_\alpha,x_\beta]\otimes y_\beta\\
&&\quad\quad\quad+a\circ x_\alpha\otimes x_\beta\otimes[y_\alpha,y_\beta]-y_\alpha\circ x_\beta\otimes [a,x_\alpha]\otimes y_\beta-x_\beta\otimes [a,x_\alpha]\otimes y_\alpha\star y_\beta\\
&&\quad\quad\quad-[a,y_\alpha]\circ x_\beta\otimes x_\alpha\otimes y_\beta+x_\beta\otimes y_\alpha\otimes [a\circ x_\alpha,y_\beta]+[x_\alpha,x_\beta]\otimes a\star y_\alpha\otimes y_\beta\\
&&\quad\quad\quad+x_\beta\otimes a\star y_\alpha\otimes [x_\alpha,y_\beta]-x_\alpha\circ x_\beta\otimes y_\beta\otimes [a,y_\alpha]-x_\beta\otimes x_\alpha \star y_\beta\otimes [a, y_\alpha]\\
&&\quad\quad\quad-[x_\alpha,x_\beta]\otimes y_\beta\otimes a\star y_\alpha-x_\beta\otimes [x_\alpha,y_\beta]\otimes a\star y_\alpha\Big)\\ 
&&\mbox{}\hspace{0.5cm} =-\sum_{\alpha,\beta}\Big(\big(a\circ [x_\alpha,x_\beta]-[a,x_\alpha]\circ x_\beta\big)\otimes y_\alpha\otimes y_\beta+x_\alpha\otimes(a\star [y_\alpha,x_\beta]+[y_\alpha\circ x_\beta, a])\otimes y_\beta\\
&&\quad\quad\quad+x_\alpha\otimes x_\beta\otimes \big(a\star [y_\alpha, y_\beta]+[y_\alpha\circ y_\beta,a]+[a\circ y_\beta,y_\alpha]\big)+a\circ x_\alpha \otimes [y_\alpha,x_\beta]\otimes y_\beta\\
&&\quad\quad\quad+a\circ x_\alpha\otimes x_\beta\otimes[y_\alpha,y_\beta]-y_\alpha\circ x_\beta\otimes [a,x_\alpha]\otimes y_\beta-x_\beta\otimes [a,x_\alpha]\otimes y_\alpha\star y_\beta\\
&&\quad\quad\quad-[a,y_\alpha]\circ x_\beta\otimes x_\alpha\otimes y_\beta+x_\beta\otimes y_\alpha\otimes [a\circ x_\alpha,y_\beta]+[x_\alpha,x_\beta]\otimes a\star y_\alpha\otimes y_\beta\\
&&\quad\quad\quad+x_\beta\otimes a\star y_\alpha\otimes [x_\alpha,y_\beta]-x_\alpha\circ x_\beta\otimes y_\beta\otimes [a,y_\alpha]-x_\beta\otimes x_\alpha \star y_\beta\otimes [a, y_\alpha]\\
&&\quad\quad\quad-[x_\alpha,x_\beta]\otimes y_\beta\otimes a\star y_\alpha-x_\beta\otimes [x_\alpha,y_\beta]\otimes a\star y_\alpha \Big)\\
&&\mbox{}\hspace{0.5cm}=-(L_{\circ}(a)\otimes \id\otimes \id+\id\otimes L_{\star}(a)\otimes \id+\id\otimes \id\otimes L_{\star}(a)){\bf C}(r)\\
&&\quad\quad\quad+(\id\otimes L_{\star}(a)\otimes \id)[r_{13}, (r+\tau r)_{23}]+\sum_\beta((R_{\circ}(x_\beta)\circ \ad(a))\otimes \id)(r+\tau r)\otimes y_\beta\nonumber\\
&&\quad\quad\quad+(\id\otimes \ad(a)\otimes\id)(r_{12}\circ r_{23}+\tau(r)_{12}\circ r_{13}+r_{13}\star r_{23})+(\id\otimes \id\otimes \ad(a)){\bf N}(r).
\end{eqnarray*}}
Therefore,  Eq. (\mref{Lc2}) holds if and only if Eq. (\mref{co-cond1}) holds.  Similarly, for $a, b\in A$, we have
\begin{eqnarray*}
&&\delta_r(b\circ a)+\Delta_r([a,b])-(R_{\circ}(a)\otimes \id)\delta_r(b)-(L_{\circ}(b)\otimes \id)\delta_r(a)-(\id\otimes L_{ \star}(b))\delta_r(a)\\
&&\quad-a.\Delta_r(b)+(\id\otimes \ad(b))(\Delta_r(a)+\tau\Delta_r(a))\\
&& \mbox{}\hspace{0.5cm}= \sum_\alpha \Big(\big([b\circ a,x_\alpha]-[a,b]\circ x_\alpha-b\circ [a,x_\alpha]-[b,x_\alpha]\circ a+[a,b\circ x_\alpha]\big)\otimes y_\alpha\\
&&\quad\quad\quad+x_\alpha\otimes \big([b\circ a,y_\alpha]-[a,b]\star y_\alpha-b\star [a,y_\alpha]+[a,b\star y_\alpha]-[b,a\star y_\alpha]\big)\\
&&\quad\quad\quad-x_\alpha \circ a\otimes [b,y_\alpha]-a\circ x_\alpha \otimes [b,y_\alpha]-y_\alpha\otimes [b, a\circ x_\alpha]-a\star y_\alpha \otimes [b,x_\alpha]\Big)\\
&&\mbox{}\hspace{0.5cm}=-(L_{\star}(a)\otimes \id)(\id\otimes \ad(b))(r+\tau r)-(\id\otimes (\ad(b)\circ L_{\circ}(a)))(r+\tau r).
\end{eqnarray*}
Therefore, Eq. (\mref{Lb4}) holds if and only if Eq. (\mref{co-cond2}) holds. Then the proof is completed.
\end{proof}

\begin{defi}
Let $(A, \circ, [\cdot,\cdot])$ be a \gda and $r\in A\otimes A$. The equation
\vspb
\begin{eqnarray}
{\bf N}(r)={\bf C}(r)=0
\vspb
\end{eqnarray}
is called the {\bf Gel'fand-Dorfman Yang-Baxter equation} (GDYBE) in $A$.
\end{defi}

\begin{pro}\mlabel{cob-GD}
Let $(A, \circ, [\cdot,\cdot])$ be a \gda and $r\in A\otimes A$ be a skew-symmetric solution of the GDYBE in $A$. Define $\Delta_r$ and $\delta_r$  by Eqs. \meqref{coNov} and \meqref{coLie} respectively. Then $(A, \circ, [\cdot,\cdot], \Delta_r, \delta_r)$ is a \gdba.
\vspb
\end{pro}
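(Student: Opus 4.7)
The plan is to invoke Theorem~\mref{thm-cob-GD}, which reduces the claim to verifying the nine equations \meqref{cob1}--\meqref{cob7} and \meqref{co-cond1}--\meqref{co-cond2}, and then to exhibit each as a consequence of one or more of our three hypotheses: skew-symmetry $r + \tau r = 0$, the NYBE $\mathbf{N}(r) = 0$, and the CYBE $\mathbf{C}(r) = 0$.

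First, skew-symmetry annihilates every expression carrying a factor of $(r+\tau r)$. This immediately disposes of Eqs.~\meqref{cob1}--\meqref{cob3}, \meqref{cob6}, and \meqref{co-cond2} outright, and eliminates several summands from each of the remaining equations \meqref{cob4}, \meqref{cob5}, and \meqref{co-cond1}.

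Second, what survives of the Novikov-bialgebra conditions \meqref{cob4}--\meqref{cob5} may be identified (modulo routine rearrangements in $A^{\otimes 3}$) with $\mathbf{N}(r)$ and so vanishes by the NYBE hypothesis; this is essentially the content of \cite[Theorem~3.22]{HBG}, which says that a skew-symmetric solution of the NYBE yields a Novikov bialgebra $(A,\circ,\Delta_r)$. Analogously, the only nontrivial piece of \meqref{cob7} is
\[
\bigl(\ad(a)\otimes\id\otimes\id + \id\otimes\ad(a)\otimes\id + \id\otimes\id\otimes\ad(a)\bigr)\mathbf{C}(r),
\]
which vanishes by $\mathbf{C}(r) = 0$; this recovers the classical coboundary construction of Lie bialgebras from \cite{CP}.

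The main obstacle is the mixed compatibility equation \meqref{co-cond1}. The $\mathbf{C}(r)$-term and the $\mathbf{N}(r)$-term there vanish directly by hypothesis, and the two middle summands containing $(r+\tau r)$ die by skew-symmetry, leaving the residue
\[
-(\id\otimes \ad(a)\otimes \id)\bigl(r_{12}\circ r_{23} + \tau(r)_{12}\circ r_{13} + r_{13}\star r_{23}\bigr).
\]
The decisive step is to argue that this residue also vanishes. My approach is to use $\tau(r) = -r$ to re-express $\tau(r)_{12}\circ r_{13}$ relative to a standard embedding, and then to match the bracketed tensor against a permutation of $\mathbf{N}(r)$: its triple of ``operation positions'' $(2,1,3)$ is a cyclic shift of the triple $(3,2,1)$ appearing in $\mathbf{N}(r) = r_{13}\circ r_{23} + r_{12}\star r_{23} + r_{13}\circ r_{12}$. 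Once this identification is set up, the NYBE hypothesis forces the bracket to vanish. Tracking the interaction between the $\tau$-swap and the positions of the Novikov products in $A^{\otimes 3}$ is the chief bookkeeping challenge of the proof.
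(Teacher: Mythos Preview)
Your proposal is correct and takes the same route as the paper, which simply writes ``It follows directly from Theorem~\mref{thm-cob-GD}.'' You supply the one step the paper leaves implicit: after skew-symmetry and the hypotheses $\mathbf{C}(r)=\mathbf{N}(r)=0$ kill the obvious terms in Eq.~\meqref{co-cond1}, the surviving residue $r_{12}\circ r_{23}+\tau(r)_{12}\circ r_{13}+r_{13}\star r_{23}$ rewrites (using $\tau r=-r$) as $r_{12}\circ r_{23}-r_{12}\circ r_{13}+r_{13}\star r_{23}$, which is exactly $-\sigma(\mathbf{N}(r))$ for the cyclic permutation $\sigma\colon a\otimes b\otimes c\mapsto c\otimes a\otimes b$ after two further applications of skew-symmetry, and hence vanishes.
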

\begin{proof}
It follows directly from Theorem \mref{thm-cob-GD}.
\end{proof}

Next, we investigate the operator form of the GDYBE.
We first need to recall the notions of a representation of a Novikov algebra \mcite{O3} and of a \gda~\mcite{WH}.

A {\bf representation} of a Novikov algebra $(A,\circ)$ is a triple $(V, l_A,r_A)$, where $V$ is a vector space and   $l_A$, $r_A: A\rightarrow {\rm End}_{\bf k}(V)$ are linear maps satisfying
    \begin{eqnarray}
        \mlabel{lef-mod1}
        &l_A(a\circ b-b\circ a)v=l_A(a)l_A(b)v-l_A(b)l_A(a)v,&\\
        \mlabel{lef-mod2}
        &l_A(a)r_A(b)v-r_A(b)l_A(a)v=r_A(a\circ b)v-r_A(b)r_A(a)v,&\\
        \mlabel{Nov-mod1}\
        &l_A(a\circ b)v=r_A(b)l_A(a)v,
        &\\
        \mlabel{Nov-mod2}
        &r_A(a)r_A(b)v=r_A(b)r_A(a)v, &
        \;a, b\in A, v\in V.
    \end{eqnarray}

Let $(A, \circ, [\cdot, \cdot])$ be a \gda and $V$ be a vector space. Let $\rho_A$, $l_A$, and $r_A$: $A \rightarrow {\rm
        End}_{\bf k}(V)$ be three linear maps. Then $(V, l_A, r_A, \rho_A)$ is called a {\bf representation} of $(A, \circ, [\cdot, \cdot])$ if $(V,\rho_A)$ is a representation of the Lie algebra $(A, [\cdot,\cdot])$, $(V, l_A, r_A)$ is a representation of the Novikov algebra $(A, \circ)$, and the following conditions are satisfied
    \begin{eqnarray}
        &&\mlabel{GF-rep-1}\;\;\rho_A(a)l_A(b)v+\rho_A(b\circ a)v+l_A([b,a])v-r_A(a)\rho_A(b)v-l_A(b)\rho_A(a) v=0,\\
        &&\mlabel{GF-rep-2}\;\;\rho_A(a)r_A(b)v-\rho_A(b)r_A(a)v-r_A(b)\rho_A(a) v+r_A(a)\rho_A(b) v-r_A([a,b])v =0,\;\;\;a, b\in A,\; v\in V.
    \end{eqnarray}

Note that $(A, L_{\circ}, R_{\circ}, \ad)$ is a representation of a \gda $(A, \circ, [\cdot,\cdot])$, called the {\bf adjoint representation} of $(A, \circ, [\cdot,\cdot])$.
\vspb

\begin{pro}\label{semi-GD}
    Let $(A, \circ, [\cdot,\cdot])$ be a \gda and $V$ be a vector space. Let $l_A$, $r_A$, $\rho_A: A\rightarrow {\rm
        End}_{\bf k}(V)$ be three linear maps. Define binary operations on the direct sum $A\oplus V$ of vector spaces as follows.
\vspb
    \begin{eqnarray}
        &&(a+u)\bullet (b+v):=a\circ b+l_A(a)v+r_A(b)u,\\
        && [a+u, b+v]:=[a,b]+\rho_A(a)v-\rho_A(b)u,\;\;a, b\in A, u, v\in V.
    \end{eqnarray}
    Then $(A\oplus V, \bullet, [\cdot,\cdot])$ is a \gda if and only if $(V, l_A, r_A, \rho_A)$ is a representation of $(A, \circ, [\cdot,\cdot])$. The  \gda $(A\oplus V, \bullet, [\cdot,\cdot])$ is called the {\bf semi-direct product of $(A, \circ, [\cdot,\cdot])$ and its representation $(V, l_A, r_A, \rho_A)$}. We denote it by $A\ltimes_{l_A, r_A, \rho_A} V$.
\end{pro}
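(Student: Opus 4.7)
The plan is to verify each axiom of a \gda on the direct sum $A \oplus V$ by substituting elements $x = a+u$, $y = b+v$, $z = c+w$ (with $a,b,c \in A$ and $u,v,w \in V$), expanding via the definitions of $\bullet$ and $[\cdot,\cdot]$, and collecting the result into its $A$-component and its $V$-component. In each case the $A$-component will automatically hold because $(A, \circ, [\cdot,\cdot])$ is already a \gda, and the $V$-component will give precisely the axioms that constitute a representation $(V, l_A, r_A, \rho_A)$.

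First I would handle the Novikov part: $(A \oplus V, \bullet)$ is a Novikov algebra iff $(V, l_A, r_A)$ is a representation of $(A, \circ)$. Expanding Eq.~\meqref{lef} on $(a+u), (b+v), (c+w)$, the $A$-component is Eq.~\meqref{lef} for $(A,\circ)$, while the $V$-component splits according to which of $u, v, w$ appears: setting two of them to zero, the three resulting linear equations in one variable yield exactly Eqs.~\meqref{lef-mod1} and \meqref{lef-mod2}. The same procedure on Eq.~\meqref{Nov} yields Eqs.~\meqref{Nov-mod1} and \meqref{Nov-mod2}. Next, the Lie part is standard: skew-symmetry of $[\cdot,\cdot]$ on $A\oplus V$ is immediate, and the $V$-component of the Jacobi identity is precisely the condition that $\rho_A$ is a representation of the Lie algebra $(A, [\cdot,\cdot])$ on $V$.

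Finally, I would expand the compatibility axiom
\[
[x \bullet y, z] - [x \bullet z, y] + [x, y] \bullet z - [x, z] \bullet y - x \bullet [y, z] = 0
\]
with $x = a+u$, $y = b+v$, $z = c+w$. The $A$-component recovers the \gda compatibility for $(A, \circ, [\cdot,\cdot])$. The $V$-component splits into three pieces according to which of $u, v, w$ is nonzero. Setting $v = w = 0$ isolates the terms linear in $u$, which can be rewritten as Eq.~\meqref{GF-rep-2} applied to $u$ (after using the representation conditions already established). Setting $u = w = 0$ (and using skew-symmetry to move $v$ into the last slot) gives Eq.~\meqref{GF-rep-1}; the piece with only $w \neq 0$ is a symmetric variant of the same two identities and produces no new constraint.

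The main obstacle is purely organizational: the compatibility axiom produces many mixed terms, and one must carefully group those involving $l_A \rho_A$, $\rho_A l_A$, $r_A \rho_A$, $\rho_A r_A$, and $l_A([{-},{-}])$, $r_A([{-},{-}])$, $\rho_A({-} \circ {-})$ and verify they collapse to exactly \meqref{GF-rep-1} and \meqref{GF-rep-2}. There is no conceptual difficulty, but the bookkeeping is the only place where the argument could go wrong; I would organize it by writing each expanded equation as a sum over a fixed ordered list of monomials in $l_A, r_A, \rho_A$ acting on $u, v, w$ and reading off coefficients. The stated equivalence then follows directly.
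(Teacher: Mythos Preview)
Your proposal is correct and follows exactly the ``straightforward calculation'' route that the paper itself names as one of the two valid proofs. The paper also points out a shortcut: the result is the special case of Proposition~\mref{matched-pair-GD} obtained by taking $B=V$ with trivial Novikov and Lie operations, which collapses the matched-pair compatibility conditions \meqref{match1}--\meqref{match4} directly to the representation axioms \meqref{GF-rep-1}--\meqref{GF-rep-2} without redoing the expansion.
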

\vspb
\begin{proof}
It can be obtained either by a straightforward calculation or as a direct consequence of Proposition~\mref{matched-pair-GD} by taking the trivial operations on $B=V$.
\vspb
\end{proof}

Let $(A, \circ, [\cdot,\cdot])$ be a \gda and $V$ be a vector space. Let $\varphi:A \rightarrow {\rm
    End}_{\bf k}(V)$ be a linear map. Define a linear map $\varphi^\ast:A \rightarrow {\rm
    End}_{\bf k}(V^\ast)$ by
\vspb
\begin{eqnarray*}
    \langle \varphi^\ast(a)f, u\rangle=-\langle f, \varphi(a)u\rangle,\;\;\; a\in A, f\in V^\ast, u\in V.
\end{eqnarray*}
\vspb

\begin{pro}
    Let $(A, \circ, [\cdot,\cdot])$ be a \gda and $(V, l_A, r_A, \rho_A)$ be a representation. Then $(V^\ast, l_A^\ast+r_A^\ast, -r_A^\ast, \rho_A^\ast)$ is a representation of $(A, \circ, [\cdot,\cdot])$.
\end{pro}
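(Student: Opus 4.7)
The plan is to verify all seven axioms defining a representation of a \gda for the triple $(l_A^{\ast}+r_A^{\ast},\,-r_A^{\ast},\,\rho_A^{\ast})$ acting on $V^{\ast}$: the four Novikov-module axioms \eqref{lef-mod1}--\eqref{Nov-mod2}, the Lie-representation axiom for $\rho_A^{\ast}$, and the two compatibility conditions \eqref{GF-rep-1}, \eqref{GF-rep-2}. The uniform technique is \emph{dualization}: pair each putative identity with an arbitrary $v\in V$ using
\[
\langle \varphi^{\ast}(a)f, u\rangle = -\langle f, \varphi(a)u\rangle,
\]
to convert it into an operator identity on $V$ for arbitrary $f\in V^{\ast}$, and then check that identity using the axioms satisfied by $(V, l_A, r_A, \rho_A)$.

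First, I would dispose of the ``easy'' parts. That $(V^{\ast},\rho_A^{\ast})$ is a representation of the Lie algebra $(A,[\cdot,\cdot])$ is the standard dual representation, obtained immediately from the sign convention in the definition of $\rho_A^{\ast}$. That $(V^{\ast}, l_A^{\ast}+r_A^{\ast}, -r_A^{\ast})$ is a representation of the Novikov algebra $(A,\circ)$ is the known description of the dual representation of a Novikov module (see \cite{HBG}), so \eqref{lef-mod1}--\eqref{Nov-mod2} are granted.

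The real content is verifying \eqref{GF-rep-1} and \eqref{GF-rep-2} for the dual. For \eqref{GF-rep-2}, substituting the dual maps produces
\[
-\rho_A^{\ast}(a)r_A^{\ast}(b)+\rho_A^{\ast}(b)r_A^{\ast}(a)+r_A^{\ast}(b)\rho_A^{\ast}(a)-r_A^{\ast}(a)\rho_A^{\ast}(b)+r_A^{\ast}([a,b])=0,
\]
and after pairing with $v$ this collapses, term by term, to \eqref{GF-rep-2} for the original representation applied to $v$, so this case is routine. For \eqref{GF-rep-1}, the dual reads
\[
\rho_A^{\ast}(a)(l_A^{\ast}+r_A^{\ast})(b)+\rho_A^{\ast}(b\circ a)+(l_A^{\ast}+r_A^{\ast})([b,a])+r_A^{\ast}(a)\rho_A^{\ast}(b)-(l_A^{\ast}+r_A^{\ast})(b)\rho_A^{\ast}(a)=0.
\]
Dualizing against $v$ yields an identity of operators on $V$ whose terms naturally split into the pieces coming from $l_A^{\ast}$ and the pieces coming from $r_A^{\ast}$. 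The plan is then to recognize the $l_A^{\ast}$-part as \eqref{GF-rep-1} for $(V, l_A, r_A, \rho_A)$, and the $r_A^{\ast}$-part as a linear combination of \eqref{GF-rep-2} together with the mixed Novikov-module axioms \eqref{lef-mod2} and \eqref{Nov-mod1}.

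The main obstacle is precisely this last step: because the new left action $l_A^{\ast}+r_A^{\ast}$ is a sum of two terms, every cross product in \eqref{GF-rep-1} unfolds into several terms upon dualization, and one must regroup them so that the ``$l$-block'' reproduces the original \eqref{GF-rep-1} while the ``$r$-block'' cancels via \eqref{GF-rep-2}, \eqref{lef-mod2} and \eqref{Nov-mod1}. Beyond this bookkeeping, no new idea is needed, so the argument reduces to an organized verification.
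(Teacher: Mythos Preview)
Your proposal is correct and follows essentially the same approach as the paper: cite \cite{HBG} for the Novikov-module part, note the Lie dual representation is standard, and then verify the two mixed axioms by pairing with $v\in V$ and reducing to identities in the original representation. The paper's only streamlining over your plan is that, for the dual of \eqref{GF-rep-1}, it simply adds \eqref{GF-rep-1} and \eqref{GF-rep-2} together (using $r_A([b,a])=-r_A([a,b])$) to obtain the needed identity for $l_A+r_A$ directly; in particular the Novikov-module axioms \eqref{lef-mod2} and \eqref{Nov-mod1} you anticipate for the ``$r$-block'' are not actually required.
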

\begin{proof}
    By \cite[Proposition 3.3]{HBG}, $(V^\ast, l_A^\ast+r_A^\ast, -r_A^\ast)$ is a representation of the Novikov algebra $(A, \circ)$. Note that $(V^\ast, \rho_A^\ast)$ is a representation of the Lie algebra $(A, [\cdot,\cdot])$. Let $a,b\in A$, $u\in V$ and $f\in V^*$. Therefore we only need to prove
    {\small \begin{eqnarray}
            &&\rho_A^\ast(a)(l_A^\ast+r_A^\ast)(b)f+\rho_A^\ast(b\circ a)f+(l_A^\ast+r_A^\ast)([b,a]) f+r_A^\ast(a)\rho_A^\ast(b)f-(l_A^\ast+r_A^\ast)(b)\rho_A^\ast(a)f=0,\mlabel{eq:rr1}\\
            &&-\rho_A^\ast(a)r_A^\ast(b)f+\rho_A^\ast(b)r_A^\ast(a)f+r_A^\ast(b)\rho_A^\ast(a)f-r_A^\ast(a)\rho_A^\ast(b)f+r_A^\ast([a,b])f=0. \mlabel{eq:rr2}
    \end{eqnarray}}
In fact, adding Eqs. (\mref{GF-rep-1}) and (\mref{GF-rep-2}) up, we obtain
\vspb
\begin{eqnarray*}
(l_A+r_A)(b)\rho_A(a)u-\rho_A(b\circ a)u-(l_A+r_A)([b,a])u+\rho_A(b)r_A(a)u-\rho_A(a)(l_A+r_A)(b)u=0.
\end{eqnarray*}
Then we have
\vspb
{\small \begin{eqnarray*}
&&\langle \rho_A^\ast(a)(l_A^\ast+r_A^\ast)(b)f+\rho_A^\ast(b\circ a)f+(l_A^\ast+r_A^\ast)([b,a]) f+r_A^\ast(a)\rho_A^\ast(b)f-(l_A^\ast+r_A^\ast)(b)\rho_A^\ast(a)f, u\rangle\\
&&\mbox{}\hspace{0.5cm}=\langle f, (l_A+r_A)(b)\rho_A(a)u-\rho_A(b\circ a)u-(l_A+r_A)([b,a])u+\rho_A(b)r_A(a)u-\rho_A(a)(l_A+r_A)(b)u\rangle\\
&&\mbox{}\hspace{0.5cm}= 0.
\end{eqnarray*}}
    Therefore Eq.~(\mref{eq:rr1}) holds.   Similarly,  Eq.~(\mref{eq:rr2}) holds.
\end{proof}

\begin{rmk}
    The adjoint representation $(A, L_{\circ}, R_{\circ}, \ad)$ of a \gda $(A, \circ,$ $ [\cdot,\cdot])$ gives the representation $(A^\ast, L_{ \star}^\ast, -R_{
        \circ}^\ast, \ad^\ast)$.
\end{rmk}

For a finite-dimensional vector space $A$, the isomorphism
\vspb
$$A\ot A\cong
\Hom_{\bf k}(A^*,{\bf k})\ot A\cong \Hom_{\bf k}(A^*,A)
\vspa
$$
identifies an $r\in A\otimes
A$ with a map from $A^*$ to $A$ which we denote by $T^r$.
Explicitly, writing $r=\sum_{\alpha}x_\alpha\otimes y_\alpha$, then
\vspb
\begin{equation}\mlabel{eq:4.12} \notag
T^r:A^*\to A, \quad T^r(f)=\sum_{\alpha}\langle f, x_\alpha\rangle y_\alpha,\;\; f\in A^*.
\vspb
\end{equation}

\begin{pro}\mlabel{oper-form1}
Let $(A, \circ, [\cdot,\cdot])$ be a \gda and $r\in A\otimes A$ be skew-symmetric. Then $r$ is a solution of the GDYBE in $(A, \circ, [\cdot,\cdot])$ if and only if $T^r$ satisfies
\begin{eqnarray}
&&\mlabel{operr1}T^r(f)\circ
T^r(g)=T^r(L_{\star}^\ast(T^r(f))g)-T^r(R_{\circ}^\ast(T^r(g))f),\\
&&\mlabel{operr2}[T^r(f), T^r(g)]=T^r(\ad^\ast(T^r(f))g-\ad^\ast(T^r(g))f, \;\;\;f,~~g\in
A^\ast.
\end{eqnarray}
\end{pro}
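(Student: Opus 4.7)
The GDYBE is by definition the conjunction of two independent equations, $\mathbf{N}(r)=0$ involving only the Novikov product $\circ$ and $\mathbf{C}(r)=0$ involving only the Lie bracket $[\cdot,\cdot]$; the operator equations \meqref{operr1} and \meqref{operr2} likewise involve disjoint data. Thus the plan is to establish separately, under the skew-symmetry hypothesis on $r$, the equivalence of $\mathbf{N}(r)=0$ with Eq.~\meqref{operr1} in the underlying Novikov algebra $(A,\circ)$, and the equivalence of $\mathbf{C}(r)=0$ with Eq.~\meqref{operr2} in the underlying Lie algebra $(A,[\cdot,\cdot])$, and then to combine them.

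The Lie half is the classical Drinfeld operator form of the CYBE for skew-symmetric $r$ recorded in \mcite{CP}. The Novikov half is the analogous statement established in \mcite{HBG}, where the pair $(L_\star^\ast, -R_\circ^\ast)$ appears precisely because together with $A^\ast$ it forms the dual representation of $(A,\circ)$. Granting these two references, the proposition follows at once since the two operator identities involve disjoint data and thus can be imposed independently.

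For a direct self-contained verification, write $r=\sum_\alpha x_\alpha \ot y_\alpha$ with $r+\tau r=0$ and pair both $\mathbf{N}(r)$ and $\mathbf{C}(r)$ with $f\ot g\ot \id$ for arbitrary $f,g\in A^\ast$, reading the outputs as elements of $A$. Using $\langle \varphi^\ast(a)h,u\rangle=-\langle h,\varphi(a)u\rangle$, the three terms of $\mathbf{C}(r)$ translate directly into the two sides of Eq.~\meqref{operr2}, while the three terms of $\mathbf{N}(r)$ translate into Eq.~\meqref{operr1}. The main (and essentially only) subtlety lies in the Novikov half: the middle term $r_{12}\star r_{23}$ contributes both an $L_\circ$ and an $R_\circ$ summand, and skew-symmetry of $r$ must be invoked repeatedly to rewrite combinations such as $\sum_\alpha \langle g,y_\alpha\rangle x_\alpha$ as $-T^r(g)$, so that the three summands reorganize into the $L_\star^\ast$--$R_\circ^\ast$ form of \meqref{operr1}. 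This bookkeeping is the principal obstacle; once carried out, the two equivalences recombine to give the proposition.
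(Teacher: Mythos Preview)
Your proposal is correct and follows essentially the same approach as the paper's proof: decompose the GDYBE into its Novikov and Lie components, invoke \mcite{HBG} for the equivalence of $\mathbf{N}(r)=0$ with Eq.~\meqref{operr1} and a standard reference (the paper uses \mcite{Ku} rather than \mcite{CP}) for the equivalence of $\mathbf{C}(r)=0$ with Eq.~\meqref{operr2}, and combine. Your additional sketch of a direct pairing computation is a welcome elaboration but not required.
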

\begin{proof}
Note that $r$ is a solution of the GDYBE in $(A, \circ, [\cdot,\cdot])$ if and only if  $r$ is both a solution of the NYBE in $(A, \circ)$ and a solution of the CYBE in $(A, [\cdot,\cdot])$. It is well known that $r$ is a solution of the CYBE in $(A, [\cdot,\cdot])$ if and only if Eq. (\mref{operr2}) holds (see \mcite{Ku}). By \cite[Theorem 3.27]{HBG}, $r$ is a solution of the NYBE in $(A, \circ)$ if and only if Eq. (\mref{operr1}) holds. Then the conclusion follows.
\vspb
\end{proof}

Proposition \mref{oper-form1} motivates us to give the following definition.

\begin{defi}
Let $(A, \circ, [\cdot,\cdot])$ be a \gda and $(V, l_A, r_A, \rho_A)$ be a representation of $A$. A linear map
$T: V\rightarrow A$ is called an {\bf $\mathcal{O}$-operator} on $(A, \circ, [\cdot,\cdot])$ associated to $(V, l_A, r_A, \rho_A)$ if $T$ satisfies
\vspc
\begin{eqnarray}
&&\mlabel{operr3}T(u)\circ T(v)=T(l_A(T(u))v+r_A(T(v))u),\\
&&\mlabel{operr4}[T(u),T(v)]=T(\rho_A(T(u))v-\rho_A(T(v))u),\;\;u, v\in V.
\end{eqnarray}
\vspb
\end{defi}
Eq. (\mref{operr3}) means that $T$ is an $\mathcal{O}$-operator on the Novikov algebra $(A, \circ)$ associated to $(V, l_A, r_A)$ (see \mcite{HBG}); while Eq. (\mref{operr4}) means that $T$ is an $\mathcal{O}$-operator on the Lie algebra $(A, [\cdot,\cdot])$ associated to $(V, \rho_A)$ (see \mcite{Ku}). On the other hand, by Proposition \mref{oper-form1}, if $r$ is skew-symmetric, then $r$ is a solution of the GDYBE in the \gda $(A, \circ, [\cdot,\cdot])$ if and only if $T^r$ is an $\mathcal{O}$-operator on $(A, \circ, [\cdot,\cdot])$ associated to $(A^\ast, L_{\star}^\ast, -R_{\circ}^\ast, \ad^\ast)$.
We give the following construction of solutions of the GDYBE from $\calo$-operators on \gdas.
\begin{thm}\mlabel{oper-form2}
Let $(A, \circ, [\cdot,\cdot])$ be a \gda and $(V, l_A, r_A, \rho_A)$ be a representation. Let $T: V\rightarrow A$ be a linear map which is
identified with $r_T\in A\otimes V^\ast \subseteq
(A\ltimes_{l_A^\ast+r_A^\ast,-r_A^\ast, \rho_A^\ast} V^\ast) \otimes
(A\ltimes_{l_A^\ast+r_A^\ast,-r_A^\ast, \rho_A^\ast} V^\ast)$ through
 ${\rm Hom}_{\bf k}(V, A)\cong A\otimes V^\ast$.
Then $r=r_T-\tau r_T$ is a solution of the GDYBE in the \gda
$(A\ltimes_{l_A^\ast+r_A^\ast,-r_A^\ast, \rho_A^\ast} V^\ast, \bullet, [\cdot,\cdot])$
if and only if $T$ is an
$\mathcal{O}$-operator on $(A,\circ, [\cdot,\cdot])$ associated to $(V, l_A,
r_A, \rho_A)$.
\end{thm}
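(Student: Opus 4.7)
The plan is to reduce the theorem to two well-known operator-form results, one for the NYBE and one for the CYBE, and then assemble them via the very definition of the GDYBE and of an $\mathcal{O}$-operator on a \gda. Nothing beyond a conceptual decomposition plus two citations should be required.

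The first step is to observe that, by definition, $r\in (A\ltimes V^*)^{\otimes 2}$ is a solution of the GDYBE in $A\ltimes_{l_A^\ast+r_A^\ast,-r_A^\ast,\rho_A^\ast} V^\ast$ if and only if both $\mathbf{N}(r)=0$ (NYBE in the Novikov algebra $(A\ltimes V^*,\bullet)$) and $\mathbf{C}(r)=0$ (CYBE in the Lie algebra $(A\ltimes V^*,[\cdot,\cdot])$) hold. Since $\mathbf{N}$ involves only $\bullet$ and $\mathbf{C}$ involves only $[\cdot,\cdot]$, the two equations decouple, and the compatibility axiom between $\circ$ and $[\cdot,\cdot]$ that enters the \gda structure plays no role in either. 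Moreover, by Proposition \mref{semi-GD}, the underlying Novikov (resp.\ Lie) algebra of the semi-direct \gda is exactly the Novikov semi-direct product $A\ltimes_{l_A^\ast+r_A^\ast,-r_A^\ast}V^\ast$ (resp.\ the Lie semi-direct product $A\ltimes_{\rho_A^\ast} V^\ast$).

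The second step is to apply the operator form for the NYBE from \mcite{HBG}: for $T:V\to A$, the element $r=r_T-\tau r_T$ is a solution of the NYBE in $A\ltimes_{l_A^\ast+r_A^\ast,-r_A^\ast}V^\ast$ if and only if $T$ satisfies Eq.~(\mref{operr3}), i.e.\ $T$ is an $\mathcal{O}$-operator on the Novikov algebra $(A,\circ)$ associated to $(V,l_A,r_A)$. The third step is the classical analogue for the CYBE (see \mcite{Ku}): $r=r_T-\tau r_T$ is a solution of the CYBE in $A\ltimes_{\rho_A^\ast}V^\ast$ if and only if $T$ satisfies Eq.~(\mref{operr4}), i.e.\ $T$ is an $\mathcal{O}$-operator on the Lie algebra $(A,[\cdot,\cdot])$ associated to $(V,\rho_A)$.

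The final step is to combine the two equivalences: by definition, $T$ is an $\mathcal{O}$-operator on the \gda $(A,\circ,[\cdot,\cdot])$ associated to $(V,l_A,r_A,\rho_A)$ precisely when both (\mref{operr3}) and (\mref{operr4}) hold. Conjoining the NYBE and CYBE equivalences therefore yields the claim. The main (modest) obstacle is bookkeeping: confirming that the semi-direct structures appearing in the two cited theorems really are the Novikov and Lie substructures of $A\ltimes_{l_A^\ast+r_A^\ast,-r_A^\ast,\rho_A^\ast}V^\ast$; this is immediate from Proposition \mref{semi-GD} together with the definitions of the dual representations. With that identification in hand, no further calculation is needed.
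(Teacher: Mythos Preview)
Your proposal is correct and follows essentially the same approach as the paper's proof, which simply states that the result follows by combining \cite[Theorem 3.29]{HBG} (the NYBE case) with \cite[Section 2]{Bai} (the CYBE case). The only minor point is that for the Lie/CYBE half you cite \mcite{Ku}, whereas the specific statement that $r_T-\tau r_T$ solves the CYBE in the semi-direct product $A\ltimes_{\rho_A^\ast}V^\ast$ if and only if $T$ is an $\mathcal{O}$-operator is more precisely the content of \mcite{Bai}; but this is a bibliographic detail, not a gap in the argument.
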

\begin{proof}
It follows directly by combining \cite[Theorem 3.29]{HBG} and \cite[Section 2]{Bai}.
\end{proof}

\begin{defi}\mlabel{ND} \mcite{HBG}
Let $A$ be a vector space with binary operations $ \lhd$ and $\rhd$. If for all $a$, $b$ and $c\in A$, they satisfy the following equalities
\vspb
\begin{align}
&a\rhd(b\rhd c)=(a\rhd b+a\lhd b)\rhd c+b\rhd(a\rhd c)-(b\rhd a+b\lhd a)\rhd c,\mlabel{ND1}\\
&a\rhd(b\lhd c)=(a\rhd b)\lhd c+b\lhd(a\lhd c+a\rhd c)-(b\lhd a)\lhd c,\mlabel{ND2}\\
&(a\lhd b+a\rhd b)\rhd c=(a\rhd c)\lhd b,\mlabel{ND3}\\
&(a\lhd b)\lhd c=(a\lhd c)\lhd b,\mlabel{ND4}
\end{align}
then $(A,\lhd,\rhd)$ is called a \bf{pre-Novikov algebra}.
\vspb
\end{defi}

\begin{defi}\mcite{XH}
Let $(A, \lhd, \rhd)$ be a pre-Novikov algebra and $(A, \diamond)$ be a left-symmetric algebra. If they satisfy the following compatibility conditions:
\vspb
\begin{eqnarray}
&&\;\;\;c\lhd(a\diamond b-b\diamond a)-a\diamond(c\lhd b)-(b\diamond c)\lhd a=-b\diamond(c\lhd a)-(a\diamond c)\lhd b,\mlabel{lnd1}\\
&&\;\;\;(a\diamond b-b\diamond a)\rhd c+(a\lhd b+a\rhd b)\diamond c
=a\rhd(b\diamond c)-b\diamond(a\rhd c)+(a\diamond c)\lhd b, a, b, c\in A,\mlabel{lnd2}
\end{eqnarray}
then the quadruple
$(A,\lhd,\rhd,\diamond)$ is called a {\bf pre-Gel'fand-Dorfman algebra (pre-\gda)}.
\vspb
\end{defi}
\begin{ex}\cite[Corollary 2.24]{XH}\mlabel{constr-pre-GD}
Recall \mcite{Lo} that a {\bf Zinbiel algebra} $(A, \cdot)$ is a vector space $A$ with a binary operation $\cdot: A\otimes A\rightarrow A$ satisfying
\vspb
\begin{eqnarray*}
a\cdot (b\cdot c)=(b\cdot a+a\cdot b)\cdot c,\;\;\;a, b, c\in A.
\end{eqnarray*}
Let $D$ be a derivation on $(A,\cdot)$ and $\xi$, $k\in {\bf k}$. Define
\vspb
\begin{eqnarray*}
&&a\lhd b:=D(b)\cdot a+\xi  b\cdot a ,~~a\rhd b:=a\cdot D(b)+\xi a \cdot b,\\
&&a\diamond b:=k(a\cdot D(b)- D(a)\cdot b+\xi (a\cdot b-b\cdot a)), \;\;\; a, b\in A.
\end{eqnarray*}
Then $(A,\lhd,\rhd, \diamond)$ is a pre-\gda.
\end{ex}

The relationship between \gdas and pre-\gdas is given as follows.
\begin{pro}\mlabel{GD-PGD}\cite[Proposition 2.19]{XH}
Let $(A,\lhd,\rhd, \diamond)$ be a pre-\gda. Define
\vspb
\begin{eqnarray}\mlabel{GD1}
a\circ b=a\lhd b+a\rhd b,~~[a ,b]=a\diamond b-b\diamond a,\;\;\;a, b\in A.
\end{eqnarray}
Then $(A,\circ,[\cdot,\cdot])$ is a \gda, called the {\bf associated \gda} of $(A, \lhd, \rhd, \diamond)$.
Moreover,  $(A,  L_{\rhd}, R_{\lhd}, L_\diamond)$ is
a representation of the \gda $(A, \circ, [\cdot, \cdot])$.
Conversely, let $A$ be a vector space
with binary operations $\rhd$, $\lhd$ and $\diamond$. If $(A, \circ, [\cdot, \cdot])$ defined by Eq. \meqref{GD1} is a \gda and $(A,  L_{\rhd}, R_{\lhd}, L_\diamond)$ is a representation of $(A, \circ, [\cdot, \cdot])$, then $(A, \lhd, \rhd, \diamond)$ is a pre-\gda.
\end{pro}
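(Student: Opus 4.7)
The plan is to split both directions of the equivalence into three layers --- the Novikov part, the Lie part, and the cross-compatibility part --- and verify each layer separately using already-established correspondences. For the Novikov layer, I would invoke the pre-Novikov theory of \cite{HBG}: the operation $a\circ b=a\lhd b+a\rhd b$ defines a Novikov product with $(A,L_\rhd,R_\lhd)$ a representation if and only if $(A,\lhd,\rhd)$ is a pre-Novikov algebra. For the Lie layer, a direct computation shows that $(A,L_\diamond)$ being a representation of the bracket $[a,b]=a\diamond b-b\diamond a$ is the identity
\begin{equation*}
(a\diamond b)\diamond c-a\diamond(b\diamond c)=(b\diamond a)\diamond c-b\diamond(a\diamond c),
\end{equation*}
which is exactly the left-symmetry of $\diamond$ as in Eq.~\meqref{lef}; left-symmetry in turn ensures that $[\cdot,\cdot]$ satisfies the Jacobi identity. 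This already handles both directions at the level of the Novikov and Lie structures in isolation, reducing the problem to matching the cross-compatibility axioms.

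For the cross-compatibility, substituting $l_A=L_\rhd$, $r_A=R_\lhd$, $\rho_A=L_\diamond$, $V=A$ with $v=c$ into \meqref{GF-rep-1} and swapping the roles of $a$ and $b$ yields precisely \meqref{lnd2}. Similarly, the same substitution into \meqref{GF-rep-2} yields \meqref{lnd1} up to an overall sign. Thus \meqref{lnd1}$\,\Leftrightarrow\,$\meqref{GF-rep-2} and \meqref{lnd2}$\,\Leftrightarrow\,$\meqref{GF-rep-1}, which supplies the cross-compatibility of the representation in the forward direction and the two pre-\gda axioms in the converse direction. In the converse direction the \gda compatibility is given as part of the hypothesis, so no further verification is needed there.

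The remaining task, and the main obstacle, is to verify the \gda compatibility
\begin{equation*}
[a\circ b,c]-[a\circ c,b]+[a,b]\circ c-[a,c]\circ b-a\circ[b,c]=0
\end{equation*}
in the forward direction from \meqref{lnd1} and \meqref{lnd2}. I would expand the left-hand side into eighteen monomials in $\lhd$, $\rhd$, $\diamond$, group them by their outermost operation, and identify the whole expression as a linear combination of two instances of \meqref{lnd2} (with the roles of $b$ and $c$ swapped) and one suitably relabeled instance of \meqref{lnd1}. The difficulty is purely bookkeeping --- keeping careful track of which variable is ``external'' in each monomial so the cancellations are visible --- and no new structural idea is required beyond this systematic computation.
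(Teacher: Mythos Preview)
The paper does not prove this proposition itself but merely cites it from \cite[Proposition 2.19]{XH}, so there is no in-paper argument to compare against. Your layered approach is correct: the identifications of \meqref{GF-rep-1} with \meqref{lnd2} (after swapping $a\leftrightarrow b$) and of \meqref{GF-rep-2} with the negative of \meqref{lnd1} check out under the substitution $(l_A,r_A,\rho_A)=(L_\rhd,R_\lhd,L_\diamond)$, and the forward GD-compatibility identity does indeed reduce to the combination of two instances of \meqref{lnd2} (with $b$ and $c$ interchanged) and one instance of \meqref{lnd1} (with variables cyclically shifted to $(b,c,a)$), exactly as you outline. A trivial correction: the full expansion yields twenty monomials rather than eighteen, but this does not affect the argument.
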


The close relationship among pre-GD algebras, $\calo$-operators on \gdas and solutions of the GDYBE can be formulated as follows.

\begin{thm} \mlabel{NYB-ND}
\begin{enumerate}
\item \mlabel{it:1}Let $(A, \circ, [\cdot,\cdot])$ be a \gda, $(V, l_A,
r_A, \rho_A)$ be a representation of $(A, \circ, [\cdot,\cdot])$ and $T: V\rightarrow A$ be an
$\mathcal{O}$-operator associated to $(V, l_A, r_A, \rho_A)$. Then there
exists a pre-\gda structure on $V$ defined by
\vspb
\begin{eqnarray}\mlabel{eq:ndend} \notag
u\rhd v:=l_A(T(u))v,\;\; u\lhd v:= r_A(T(v))u, \;\; u\diamond v:=\rho_A(T(u))v,\;\;\; u,~~v\in V.
\end{eqnarray}
\item\mlabel{it:2}
Let $(A, \lhd, \rhd, \diamond)$ be a pre-\gda and $(A, \circ, [\cdot,\cdot])$ be the
associated \gda. Then the identity map is an $\mathcal
O$-operator on $(A, \circ, [\cdot,\cdot])$ associated to the representation $(A,
L_{\rhd}, R_{\lhd}, L_\diamond)$.
\item \mlabel{it:3}
Let $(A, \lhd, \rhd, \diamond)$ be a pre-\gda and $(A,
\circ, [\cdot,\cdot])$ be the associated \gda.
Let $\{e_1, \ldots, e_n\}$ be a linear basis of
$A$ and $\{e_1^\ast, \ldots, e_n^\ast\}$ be the dual basis of
$A^\ast$.
Then
\vspb
\begin{eqnarray}\mlabel{eq:solu}
r:=\sum_{\alpha=1}^n(e_\alpha\otimes e_\alpha^\ast-e_\alpha^\ast\otimes e_\alpha),
\vspb
\end{eqnarray}
is a skew-symmetric solution of the GDYBE in
the \gda $A\ltimes_{L_\rhd^\ast+R_\lhd^\ast,
-R_\lhd^\ast, L_\diamond^\ast}A^\ast$.
\end{enumerate}
\end{thm}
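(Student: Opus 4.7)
The plan is to prove the three claims in order, leveraging results on $\calo$-operators for the Novikov and Lie components separately and then combining them via the GD compatibility axioms.

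For part (\mref{it:1}), the three operations $\rhd, \lhd, \diamond$ are given and we must verify the pre-GD axioms for $(V,\lhd,\rhd,\diamond)$. The pre-Novikov axioms \meqref{ND1}--\meqref{ND4} for $(V,\lhd,\rhd)$ follow from the corresponding result for Novikov $\calo$-operators in~\cite{HBG}, applied to $T$ viewed as an $\calo$-operator on the Novikov algebra $(A,\circ)$ associated with $(V,l_A,r_A)$ (Eq.~\meqref{operr3}). The left-symmetry of $\diamond$ is the analogous Lie statement: $T$ as an $\calo$-operator on $(A,[\cdot,\cdot])$ with respect to $(V,\rho_A)$ (Eq.~\meqref{operr4}) induces a pre-Lie structure on $V$, and this is classical (see \cite{Ku}). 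The real content is the two mixed compatibility axioms \meqref{lnd1} and \meqref{lnd2}. For \meqref{lnd1}, expanding each term by the definitions of $\rhd,\lhd,\diamond$ and using \meqref{operr4} to rewrite $T(u\diamond v-v\diamond u)=[T(u),T(v)]$, the identity reduces to \meqref{GF-rep-2} evaluated on the elements $T(u),T(v)\in A$ acting on $c\in V$. For \meqref{lnd2}, the same procedure with \meqref{operr3} used to rewrite $T(u\rhd v+u\lhd v)=T(u)\circ T(v)$ reduces the identity to \meqref{GF-rep-1} with $a\mapsto T(v)$, $b\mapsto T(u)$.

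Part (\mref{it:2}) is immediate from Proposition~\mref{GD-PGD}: that proposition asserts that $(A,L_\rhd,R_\lhd,L_\diamond)$ is a representation of the associated \gda $(A,\circ,[\cdot,\cdot])$, so only the two $\calo$-operator identities \meqref{operr3} and \meqref{operr4} for $T=\id$ remain. With $T=\id$ these become $a\circ b=a\rhd b+a\lhd b$ and $[a,b]=a\diamond b-b\diamond a$, which are the very defining equations \meqref{GD1} of the associated \gda.

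Part (\mref{it:3}) combines part~(\mref{it:2}) with Theorem~\mref{oper-form2}. Under the identification $\Hom_{\bfk}(A,A)\cong A\otimes A^\ast$, the identity map $\id\colon A\to A$ corresponds to $r_\id=\sum_{\alpha=1}^n e_\alpha\otimes e_\alpha^\ast$. By part~(\mref{it:2}), $\id$ is an $\calo$-operator on $(A,\circ,[\cdot,\cdot])$ associated with $(A,L_\rhd,R_\lhd,L_\diamond)$. Theorem~\mref{oper-form2}, applied with $V=A$ and this representation, then yields that
\[
r=r_\id-\tau r_\id=\sum_{\alpha=1}^n(e_\alpha\otimes e_\alpha^\ast-e_\alpha^\ast\otimes e_\alpha)
\]
is a skew-symmetric solution of the GDYBE in $A\ltimes_{L_\rhd^\ast+R_\lhd^\ast,-R_\lhd^\ast,L_\diamond^\ast}A^\ast$, which is exactly the formula~\meqref{eq:solu}.

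The only step requiring genuine calculation is the reduction of \meqref{lnd1} and \meqref{lnd2} to the GD representation axioms in part~(\mref{it:1}); this is straightforward substitution with no subtle rearrangement, so the main obstacle is bookkeeping rather than any conceptual difficulty. Parts~(\mref{it:2}) and~(\mref{it:3}) are then essentially formal consequences.
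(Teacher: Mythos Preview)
Your proof is correct and follows essentially the same approach as the paper: the paper simply states that (\mref{it:1}) and (\mref{it:2}) follow from direct verification and derives (\mref{it:3}) from (\mref{it:2}) together with Theorem~\mref{oper-form2}, exactly as you do. Your explicit reduction of the mixed axioms \meqref{lnd1} and \meqref{lnd2} to \meqref{GF-rep-2} and \meqref{GF-rep-1} is precisely the content of the ``direct verification'' the paper leaves to the reader.
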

\begin{proof}
\meqref{it:1} and \meqref{it:2} follow from direct verification.
For \meqref{it:3}, since the identity map
$\id: A\rightarrow A$ is an $\mathcal{O}$-operator on $(A, \circ, [\cdot,\cdot])$
associated to $(A, L_\rhd, R_\lhd, L_\diamond)$, the
conclusion follows from Theorem \mref{oper-form2}.
\end{proof}

\subsection{GDYBE and classical conformal Yang-Baxter equation}
We first recall the conformal classical Yang-Baxter equation \mcite{L} which appears in the coboundary case of Lie conformal bialgebras.

Let $(R, [\cdot_\lambda \cdot])$ be a Lie conformal algebra and $r=\sum_\alpha x_\alpha\otimes y_\alpha\in R\otimes R$. Let $\partial^{\otimes^3}:=\partial\otimes \id\otimes \id+\id\otimes \partial\otimes \id+\id\otimes \id\otimes \partial$.
The following equation
\begin{eqnarray*}
[[r,r]]:&=& \sum_{\alpha,\beta}([{x_\alpha}_\mu x_\beta]\otimes y_\alpha\otimes y_\beta|_{\mu=\id\otimes \partial\otimes \id}\\
&&-x_\alpha\otimes [{x_\beta}_\mu y_\alpha]\otimes y_\beta|_{\mu=\id\otimes \id\otimes \partial}-x_\alpha\otimes x_\beta\otimes [{y_\beta}_\mu y_\alpha]|_{\mu=\id\otimes \partial\otimes \id})\\
&\equiv &0 \;\; \text{mod}~(\partial^{\otimes^3}) ~~~\text{in $R\otimes R\otimes R$}
\end{eqnarray*}
is called the {\bf conformal classical Yang-Baxter equation in $R$} or simply {\bf CCYBE in $R$}.

\begin{pro}\cite[Theorem 3.4]{L}\mlabel{coboundary-conf}
Let $R$ be a Lie conformal algebra. If $r=\sum_\alpha x_\alpha\otimes y_\alpha \in R\otimes R$ is a skew-symmetric solution of the CCYBE in $R$, then $(R, [\cdot_\lambda \cdot], \delta)$ is a Lie conformal bialgebra, where $\delta$ is defined by
\vspb
\begin{eqnarray}\mlabel{coboundary}
\delta(a)=a_\lambda
r|_{\lambda=-\partial^{\otimes^2}}=\sum_\alpha ([a_\lambda x_\alpha]\otimes y_\alpha+x_\alpha\otimes [a_\lambda y_\alpha])|_{\lambda=-\partial^{\otimes^2}},\;\;a\in R.
\vspb
\end{eqnarray}
\end{pro}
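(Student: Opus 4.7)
The plan is to verify the three defining properties of a Lie conformal bialgebra for $(R, [\cdot_\lambda\cdot], \delta)$: that $\delta$ is a $\bfk[\partial]$-module homomorphism, that $(R, \delta)$ is a Lie conformal coalgebra (skew-symmetry and co-Jacobi), and that the cocycle condition $a_\lambda\delta(b) - b_{-\lambda-\partial}\delta(a) = \delta([a_\lambda b])$ holds.

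Three of the four verifications are relatively mechanical. For $\bfk[\partial]$-linearity I would apply conformal sesquilinearity $[\partial a_\lambda c] = -\lambda[a_\lambda c]$ to both tensor factors in $a_\lambda r$ and then carry out the substitution $\lambda = -\partial^{\otimes^2}$, producing $\delta(\partial a) = \partial^{\otimes^2}\delta(a)$. For skew-symmetry of $\delta$, the key observation is that the $\lambda$-action of $a$ on $R\otimes R$ is by derivations on both tensor factors, so it commutes with the flip: $\tau(a_\lambda r) = a_\lambda(\tau r)$. Since $\tau r = -r$ by hypothesis and $\partial^{\otimes^2}$ is $\tau$-symmetric, we get $\tau\delta(a) = -\delta(a)$. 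For the cocycle condition, the crucial identity is
\[
a_\lambda(b_\mu r) - b_\mu(a_\lambda r) = [a_\lambda b]_{\lambda+\mu}\, r,
\]
which follows directly by applying the Jacobi identity to each nested expression $[a_\lambda[b_\mu x_\alpha]]$ and $[a_\lambda[b_\mu y_\alpha]]$ produced when one iterates the derivation action. Evaluating suitably at $\mu = -\lambda - \partial^{\otimes^2}$ (with $\partial$'s placed on the correct tensor slots) converts this into the required cocycle condition; the substitution is compatible because $[a_\lambda b]$ is a polynomial in $\lambda$ and $\delta$ is $\bfk[\partial]$-linear.

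The main obstacle, as in the classical coboundary case, is the co-Jacobi identity
\[
(\id\otimes\delta)\delta(a) - (\tau\otimes\id)(\id\otimes\delta)\delta(a) = (\delta\otimes\id)\delta(a).
\]
The strategy is to expand both sides by inserting the definition $\delta(b) = b_\mu r|_{\mu=-\partial^{\otimes^2}}$ into the outer $\delta$. Each resulting term is a doubly-nested conformal bracket applied to a piece of $r\otimes r$; systematic use of the Jacobi identity in $R$ should reorganize the entire expression into the form $a_\nu[[r,r]]$ for an appropriate substitution of $\nu$. Because $[[r,r]] \equiv 0 \pmod{\partial^{\otimes^3}}$ by the CCYBE hypothesis, and because the $\partial^{\otimes^3}$-ambiguity corresponds precisely to terms that vanish under the evaluation $\nu = -\partial^{\otimes^2}$ (here the $\bfk[\partial]$-module structure on triple tensors plays its role), the co-Jacobi identity follows.

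The delicate bookkeeping step is keeping track of where each factor of $\partial$ lands among the three tensor slots when substituting for the spectral parameters; this is where one must check that the $\partial^{\otimes^3}$ indeterminacy in $[[r,r]]$ matches the indeterminacy inherent in the substitution $\mu = -\partial^{\otimes^2}$ performed twice. Once these $\partial$-degrees are properly aligned, the remaining computation is an honest consequence of the Jacobi identity and the skew-symmetry of both $r$ and the $\lambda$-bracket.
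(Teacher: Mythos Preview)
The paper does not supply its own proof of this proposition; it is quoted verbatim as \cite[Theorem 3.4]{L} and used as a black box. So there is no in-paper argument to compare against.

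That said, your outline is the standard route and matches what one finds in Liberati's original proof: the $\bfk[\partial]$-linearity and skew-symmetry of $\delta$ are immediate from sesquilinearity and $\tau r=-r$; the cocycle condition reduces to the derivation identity $a_\lambda(b_\mu r)-b_\mu(a_\lambda r)=[a_\lambda b]_{\lambda+\mu}r$ via Jacobi; and the co-Jacobi identity is obtained by expanding the nested $\delta$'s and reorganizing into $a$ acting on $[[r,r]]$, which vanishes modulo $\partial^{\otimes^3}$. Your description of the ``delicate bookkeeping'' is accurate: the only genuine work is tracking which $\partial$ lands on which tensor slot so that the $\partial^{\otimes^3}$-ambiguity in the CCYBE is absorbed by the substitution. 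Nothing in your sketch is wrong, though to make it a proof rather than a plan you would need to actually carry out that bookkeeping explicitly.
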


The relationship between GDYBE and CCYBE is given as follows.
\begin{pro}\mlabel{conformal Yang-Baxter equation}
Let $(A, \circ, [\cdot,\cdot])$ be a \gda, and let $R={\bf k}[\partial]A$ be the Lie conformal algebra corresponding to $(A, \circ, [\cdot,\cdot])$. Let $r
\in A\otimes A$ be skew-symmetric and also naturally regarded as an element in $R\ot R$. Then
$r\in R\otimes R$ is a solution of the CCYBE in
$R$ if and only if $r$ is a solution of the GDYBE in $A$.
\vspb
\end{pro}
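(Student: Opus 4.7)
The plan is to establish the equivalence by a direct expansion of $[[r,r]]$ using the explicit $\lambda$-bracket $[a_\lambda b]=\partial(b\circ a)+\lambda(a\star b)+[a,b]$ coming from Proposition~\mref{Correspond-Algebra}. Writing $r=\sum_\alpha x_\alpha\otimes y_\alpha$, each of the three $\lambda$-bracket summands in the definition of $[[r,r]]$ decomposes by substitution into a $\partial$-degree $0$ part (from $[\cdot,\cdot]$) and a $\partial$-degree $1$ part (from $\partial(b\circ a)$ and from $\mu(a\star b)$ after $\mu$ is replaced by the prescribed operator). Thus $[[r,r]]=D_0+D_1$ with $D_0\in A^{\otimes 3}$ and $D_1\in(\partial A\otimes A\otimes A)\oplus(A\otimes\partial A\otimes A)\oplus(A\otimes A\otimes\partial A)$.

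A direct substitution together with the antisymmetry $[a,b]=-[b,a]$ identifies $D_0=[r_{12},r_{13}]+[r_{12},r_{23}]+[r_{13},r_{23}]={\bf C}(r)$. Writing $D_1=(\partial\otimes\id\otimes\id)S_1+(\id\otimes\partial\otimes\id)S_2+(\id\otimes\id\otimes\partial)S_3$ with $S_i\in A^{\otimes 3}$ read off from the expansion, the commutativity $a\star b=b\star a$ of $\star$ yields $S_1=r_{13}\circ r_{12}$ and $S_3=-r_{12}\star r_{23}-r_{13}\circ r_{23}$, so $S_1-S_3={\bf N}(r)$. A further direct computation, using the skew-symmetry $\tau r=-r$ to swap $x_\alpha\leftrightarrow y_\alpha$ in appropriate slots with a sign, gives the identity
\[
S_2-S_1=\tau_{23}{\bf N}(r),
\]
where $\tau_{23}$ swaps the last two tensor factors of $A^{\otimes 3}$.

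Since $R=\bfk[\partial]A$ is a free $\bfk[\partial]$-module on $A$, the three subspaces $\partial A\otimes A\otimes A$, $A\otimes\partial A\otimes A$, and $A\otimes A\otimes\partial A$ lie in direct sum inside $R^{\otimes 3}$, and the image of $\partial^{\otimes^3}$ restricted to $A^{\otimes 3}$ is exactly the diagonal $\{(\partial\otimes\id\otimes\id)Y+(\id\otimes\partial\otimes\id)Y+(\id\otimes\id\otimes\partial)Y:Y\in A^{\otimes 3}\}$. Subtracting $\partial^{\otimes^3}S_1$ from $[[r,r]]$, the CCYBE condition $[[r,r]]\equiv 0\bmod \partial^{\otimes^3}$ becomes equivalent to
\[
{\bf C}(r)+(\id\otimes\partial\otimes\id)\tau_{23}{\bf N}(r)-(\id\otimes\id\otimes\partial){\bf N}(r)\equiv 0\bmod \partial^{\otimes^3}.
\]
Comparing components slot by slot forces ${\bf C}(r)=0$, $\tau_{23}{\bf N}(r)=0$, and ${\bf N}(r)=0$. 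The last two conditions are equivalent, and together with the first they amount precisely to the GDYBE; the converse implication is immediate since the displayed expression is manifestly zero when $r$ satisfies the GDYBE.

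The main obstacle is the identity $S_2-S_1=\tau_{23}{\bf N}(r)$: one applies $\tau_{23}$ to each of the three summands of ${\bf N}(r)$ and then uses the skew-symmetry $\tau r=-r$, together with the commutativity of $\star$, to rewrite each summand in the exact form appearing in $S_2-S_1$. All remaining steps are elementary term-by-term expansion and the direct-sum argument above.
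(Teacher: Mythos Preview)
Your proposal is correct and follows essentially the same approach as the paper: both expand $[[r,r]]$ using the explicit $\lambda$-bracket, separate the degree-$0$ part ${\bf C}(r)$ from the degree-$1$ part, and reduce modulo $\partial^{\otimes^3}$ to extract the conditions. You are somewhat more explicit than the paper in two places: you identify the second-slot coefficient as $\tau_{23}{\bf N}(r)$ via the skew-symmetry $\tau r=-r$ (the paper leaves this implicit in ``it is straightforward''), and you spell out the freeness/direct-sum argument that forces ${\bf C}(r)$, $\tau_{23}{\bf N}(r)$, and ${\bf N}(r)$ to vanish separately, whereas the paper simply arrives at the expression ${\bf C}(r)+(\id\otimes\id\otimes\partial)(-{\bf N}(r))+(\id\otimes\partial\otimes\id)(r_{12}\circ r_{13}-r_{12}\circ r_{23}-r_{13}\star r_{23})$ and declares the conclusion straightforward.
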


\begin{proof} Set $r=\sum_\alpha x_\alpha\otimes y_\alpha\in A\otimes A$.
Note that {\small \begin{eqnarray*}
[[r,r]]&=&\sum_{\alpha,\beta}\Big(\big(\partial(x_\beta\circ x_\alpha)+\mu({x_\alpha}\star x_\beta)+[x_\alpha, x_\beta]\big)\otimes y_\alpha\otimes y_\beta|_{\mu=\id\otimes \partial\otimes \id}\\
&&-x_\alpha\otimes \big(\partial(y_\alpha\circ x_\beta)+\mu (x_\beta\star y_\alpha)+[x_\beta, y_\alpha]\big)\otimes y_\beta|_{\mu=\id\otimes \id\otimes \partial}\\
&&-x_\alpha\otimes x_\beta\otimes \big(\partial (y_\alpha\circ y_\beta)+\mu(y_\alpha\star y_\beta)+[y_\beta, y_\alpha]\big)|_{\mu=\id\otimes \partial\otimes \id}\Big)\\
&=&\sum_{\alpha,\beta}\Big(\partial(x_\beta\circ x_\alpha)\otimes y_\alpha\otimes y_\beta+{x_\alpha}\star x_\beta \otimes \partial y_\alpha\otimes y_\beta+[x_\alpha, x_\beta]\otimes y_\alpha\otimes y_\beta\\
&&-x_\alpha\otimes \partial(y_\alpha\circ x_\beta)\otimes y_\beta-x_\alpha\otimes x_\beta \star y_\alpha\otimes \partial y_\beta-x_\alpha\otimes [x_\beta, y_\alpha]\otimes y_\beta\\
&&-x_\alpha\otimes x_\beta\otimes \partial (y_\alpha\circ y_\beta)-x_\alpha\otimes \partial x_\beta\otimes y_\alpha\star y_\beta-x_\alpha\otimes x_\beta\otimes [y_\beta, y_\alpha]\Big)\\
&\equiv &\sum_{\alpha,\beta}\Big((-\id\otimes \partial\otimes \id-\id\otimes \id\otimes \partial)(x_\beta\circ x_\alpha)\otimes y_\alpha\otimes y_\beta+{x_\alpha}\star x_\beta\otimes \partial y_\alpha\otimes y_\beta-x_\alpha\otimes \partial(y_\alpha\circ x_\beta)\otimes y_\beta\\
&&-x_\alpha\otimes x_\beta \star y_\alpha\otimes \partial y_\beta-x_\alpha\otimes x_\beta\otimes \partial (y_\alpha\circ y_\beta)-x_\alpha\otimes \partial x_\beta\otimes y_\alpha\star y_\beta+{\bf C}(r)\Big)\;\; \text{mod $(\partial^{\otimes^3})$}\\
&\equiv& \sum_{\alpha,\beta}\Big((\id\otimes \id\otimes \partial)\big(-x_\beta\circ x_\alpha\otimes y_\alpha\otimes y_\beta-x_\alpha\otimes x_\beta\star y_\alpha\otimes y_\beta-x_\alpha\otimes x_\beta\otimes y_\alpha\circ y_\beta\big)\\
&&+(\id\otimes \partial \otimes \id)\big(x_\alpha\circ x_\beta\otimes y_\alpha\otimes y_\beta-x_\alpha\otimes y_\alpha\circ x_\beta\otimes y_\beta-x_\alpha\otimes x_\beta\otimes y_\alpha\star y_\beta\big)+{\bf C}(r)\Big)\;\; \text{mod $(\partial^{\otimes^3})$}.
\end{eqnarray*}}
Then it is straightforward to show that $r$ is a solution of the CCYBE in $R$ if and only if $r$ is a solution of the GDYBE in $A$.
\end{proof}

Let $r\in A\otimes A$ be a skew-symmetric solution of the
GDYBE in a \gda $(A, \circ, [\cdot,\cdot])$. Then by Proposition~\mref{cob-GD},  $(A, \circ, [\cdot,\cdot], \Delta=\Delta_r, \delta_0=\delta_r)$
with $\Delta_r$ and $\delta_r$ defined by Eqs. (\mref{coNov}) and (\mref{coLie}) respectively is a \gdba.
Let $R={\bf k}[\partial]A$
be the Lie conformal algebra corresponding to $(A, \circ, [\cdot,\cdot])$. On the one hand, by Theorem~\mref{thm1-corr}, there is the corresponding Lie conformal bialgebra
$(R, [\cdot_\lambda \cdot], \delta)$ with $\delta$ defined by Eq.~(\mref{cobracket}). On the other hand, by Proposition~\mref{conformal Yang-Baxter equation},
$r$ is a skew-symmetric solution of the CCYBE in $R$ and hence by Proposition~\mref{coboundary-conf}, there is a Lie conformal bialgebra $(R, [\cdot_\lambda \cdot], \bar\delta)$ with $\bar\delta$ defined by Eq.~(\mref{coboundary}).

\begin{cor}\mlabel{cor:r} With the above notations, the two Lie conformal bialgebras $(R, [\cdot_\lambda \cdot], \delta)$ and $(R, [\cdot_\lambda \cdot], \bar\delta)$  coincide. Moreover, we have the following commutative diagram.
\vspc
$$  \xymatrix{
\txt{\tiny $r$\\ \tiny a skewsymmetric solution\\ \tiny of the GDYBE in $(A, \circ, [\cdot,\cdot])$} \ar[rr]^-{\rm Prop. \mref{cob-GD}}   \ar@{<->}[d]_-{\rm Prop. \mref{conformal Yang-Baxter equation}}&& \txt{\tiny $(A, \circ, [\cdot,\cdot], \Delta, \delta_0)$\\ \tiny a \gdba} \ar@{<->}[d]_-{\rm Thm. \mref{thm1-corr}}\\
\txt{\tiny $r$\\ \tiny a skewsymmetric solution of\\ \tiny  the CCYBE in ${\bf k}[\partial]A$} \ar[rr]^-{\rm
Prop. \mref{coboundary-conf}}
            && \txt{\tiny $({\bf k}[\partial]A, [\cdot_\lambda \cdot], \delta)$\\ \tiny  a Lie conformal bialgebra  }
}
\vspb
$$
\end{cor}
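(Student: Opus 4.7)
The plan is to show $\delta = \bar\delta$ on $R = \bfk[\partial]A$ by direct expansion. Since both $\delta$ and $\bar\delta$ are $\bfk[\partial]$-module homomorphisms, it suffices to verify the equality on generators $a \in A$. Everything else in the corollary follows automatically: the commutative diagram just asserts that passing from a skew-symmetric solution $r$ of the GDYBE first to a \gdba via Proposition~\ref{cob-GD} and then to a Lie conformal bialgebra via Theorem~\ref{thm1-corr} gives the same bialgebra structure as first transporting $r$ to a solution of the CCYBE in $R$ via Proposition~\ref{conformal Yang-Baxter equation} and then applying Proposition~\ref{coboundary-conf}; the two routes agree precisely when $\delta = \bar\delta$.

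Write $r = \sum_\alpha x_\alpha \otimes y_\alpha$. The first step is to unfold $\bar\delta(a)$ using Proposition~\ref{Correspond-Algebra}: each bracket $[a_\lambda x_\alpha]$ expands as $\partial(x_\alpha \circ a) + \lambda(a\star x_\alpha) + [a,x_\alpha]$, and similarly for $[a_\lambda y_\alpha]$. The substitution $\lambda \mapsto -\partial^{\otimes^2}$ acts on $R\otimes R$ by $\lambda(X\otimes Y) \mapsto -(\partial X\otimes Y + X\otimes \partial Y)$, converting the linear-in-$\lambda$ terms into tensor derivatives. The constant-in-$\lambda$ pieces assemble into $\delta_r(a) = (\ad(a)\otimes \id + \id\otimes \ad(a))r$, exactly $\delta_0(a)$.

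The second step is to unfold $\delta(a) = \delta_0(a) + (\partial\otimes \id)\Delta(a) - \tau(\partial\otimes \id)\Delta(a)$ using the formulas $\Delta_r(a) = -(L_\circ(a)\otimes\id+\id\otimes L_\star(a))r$ and $\delta_r(a)$. The comparison then reduces to identifying the non-$\delta_0$ contributions on both sides. For the $(\partial\otimes \id)\Delta_r(a)$ term one obtains $-\sum_\alpha \partial(a\circ x_\alpha)\otimes y_\alpha - \sum_\alpha \partial x_\alpha \otimes (a\star y_\alpha)$, which matches exactly two of the $\partial$-terms coming from $\bar\delta(a)$ after the identity $x_\alpha \circ a - a\star x_\alpha = -a\circ x_\alpha$. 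For the $-\tau(\partial\otimes\id)\Delta_r(a)$ term, one uses the skew-symmetry $\tau r = -r$ to rewrite $\sum_\alpha y_\alpha \otimes f(x_\alpha) = -\sum_\alpha x_\alpha \otimes f(y_\alpha)$ for any linear $f$, landing on $-\sum_\alpha x_\alpha \otimes \partial(a\circ y_\alpha) - \sum_\alpha (a\star x_\alpha)\otimes \partial y_\alpha$, which is precisely what the remaining $\partial$-pieces of $\bar\delta(a)$ produce.

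The main obstacle is bookkeeping rather than conceptual: one must keep track of seven types of monomials ($\partial$ or not, left or right slot, and the three flavors $\circ$, $\star$, $[\cdot,\cdot]$) and apply skew-symmetry in the right slot at the right time. The computation is streamlined by collecting first all the $\partial$-free (bracket) terms, which match $\delta_0(a)$ on the nose, and then pairing the $\partial$-bearing terms using the two cancellations $x_\alpha\circ a - a\star x_\alpha = -a\circ x_\alpha$ and $y_\alpha\circ a - a\star y_\alpha = -a\circ y_\alpha$ together with one application of skew-symmetry. Once $\delta(a) = \bar\delta(a)$ is established on $A$, the $\bfk[\partial]$-linearity of both maps propagates the equality to all of $R$, and the commutativity of the displayed diagram is then immediate.
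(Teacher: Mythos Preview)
Your proposal is correct and follows essentially the same route as the paper: both arguments expand $\delta(a)$ and $\bar\delta(a)$ on generators $a\in A$, then match the $\partial$-free pieces to $\delta_0(a)$ and pair the remaining $\partial$-terms using the identity $x\circ a - a\star x = -a\circ x$ together with one application of the skew-symmetry $\tau r=-r$. The only cosmetic difference is that the paper starts from $\delta(a)$ and rewrites it into the form $a_\lambda r|_{\lambda=-\partial^{\otimes^2}}$, whereas you expand both sides separately before comparing.
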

\begin{proof}
Set $r=\sum_\alpha x_\alpha\otimes y_\alpha\in A\otimes A$. Let $a\in A$.  Then
\vspb
{\small \begin{eqnarray*}
\delta(a)&=&(\partial\otimes \id)\Delta(a)-\tau (\partial\otimes \id)\Delta(a)+\delta_0(a)\\
&=&-(\partial\otimes \id)\sum_\alpha (a\circ x_\alpha\otimes y_\alpha+x_\alpha\otimes a\star y_\alpha)+\tau(\partial\otimes \id)\sum_\alpha (a\circ x_\alpha\otimes y_\alpha+x_\alpha\otimes a\star y_\alpha)\\
&&+\sum_\alpha ([a, x_\alpha]\otimes y_\alpha+x_\alpha\otimes [a, y_\alpha])\\
&=&-\sum_\alpha(\partial(a\circ x_\alpha)\otimes y_\alpha+\partial x_\alpha\otimes a\star y_\alpha-y_\alpha\otimes \partial(a\circ x_\alpha)
-a\star y_\alpha \otimes \partial x_\alpha\\
&&- [a, x_\alpha]\otimes y_\alpha-x_\alpha\otimes [a, y_\alpha])\\
&=&-\sum_\alpha(\partial(a\circ x_\alpha)\otimes y_\alpha+\partial x_\alpha\otimes a\star y_\alpha+x_\alpha\otimes \partial(a\circ y_\alpha)
+a\star x_\alpha \otimes \partial y_\alpha\\
&&\quad -[a, x_\alpha]\otimes y_\alpha-x_\alpha\otimes [a, y_\alpha])\\
&=&\sum_\alpha((\partial(x_\alpha\circ a)+\lambda(a\star x_\alpha)+[a,x_\alpha])\otimes y_\alpha+x_\alpha\otimes (\partial (y_\alpha\circ a)+\lambda (a\star y_\alpha)+[a,y_\alpha]))|_{\lambda=-\partial^{\otimes^2}}\\
&=&a_\lambda r |_{\lambda=-\partial^{\otimes^2}}=\bar \delta(a).
\end{eqnarray*}
}
The commutative diagram follows directly.
\vspb
\end{proof}

\begin{defi}\mcite{K1}
    Let $M$ and $N$ be ${\bf k}[\partial]$-modules. A {\bf conformal linear map} from $M$ to $N$ is a ${\bf k}$-linear map $f: M\rightarrow N[\lambda]$, denoted by $f_\lambda: M\rightarrow N$, such that $[\partial, f_\lambda]=-\lambda f_\lambda$. Denote the ${\bf k}$-vector space of all such maps by $\text{Chom}(M,N)$.
    It has a canonical structure of a ${\bf k}[\partial]$-module
\vspb
    $$(\partial f)_\lambda =-\lambda f_\lambda.$$ Define the {\bf conformal dual} of a ${\bf k}[\partial]$-module $M$ by $M^{\ast c}=\text{Chom}(M,{\bf k})$, where ${\bf k}$ is viewed as the trivial ${\bf k}[\partial]$-module, that is
\vspb
    $$M^{\ast c}=\big\{f:M\rightarrow {\bf k}[\lambda]~~\big|~~\text{$f$ is ${\bf k}$-linear and}~~f_\lambda(\partial b)=\lambda f_\lambda b\big\}.$$
\vspb
\end{defi}

Let $M$ be a finitely generated ${\bf k}[\partial]$-module. Set $\text{Cend}(M)=\text{Chom}(M,M)$.
Then there is a Lie conformal algebra structure on $\text{Cend}(M)$ defined by
\vspb
\begin{eqnarray}
    [f_\lambda g]_\mu v=f_\lambda (g_{\mu-\lambda} v)-g_{\mu-\lambda}(f_{\lambda} v), ~~~ f,~g\in \text{Cend}(M), v\in M.\end{eqnarray}
Denote this Lie conformal algebra by $\text{gc}(M)$, which is called the {\bf general Lie conformal algebra} of $M$.

\begin{defi}\mcite{K1}
    Let $M$ be a finitely generated ${\bf k}[\partial]$-module and $R$ be a Lie conformal algebra. $(M, \rho)$ is called {\bf a representation} of $R$ if $\rho: R\rightarrow \text{gc}(M)$ is a homomorphism of Lie conformal algebras.
\vspb
\end{defi}

\begin{rmk}
We denote the {\bf adjoint representation} of $R$ by $(R, \ad_R)$, where $\ad_R(a)_\lambda b=[a_\lambda b]$, where $a$, $b\in R$. Let $(M, \rho)$ be a representation of $R$. There is also a dual representation $(M^{\ast c}, \rho^\ast)$ given as follows
\vspb
\begin{eqnarray*}
            (\rho^\ast(a)_\lambda \varphi)_\mu u=-\varphi_{\mu-\lambda}(\rho(a)_\lambda u),\;\;\;a\in R, \varphi\in M^{\ast c}, u\in M.
\vspd
    \end{eqnarray*}
\end{rmk}
\begin{pro}\cite[Proposition 3.1]{HL}\label{semi-Lie}
Let $R$ be a Lie conformal algebra, $M$ be a ${\bf
k}[\partial]$-module and $\rho: R\rightarrow \text{gc}(M)$ be a
${\bf k}[\partial]$-module homomorphism. Define a
$\lambda$-bracket on the direct sum $R\oplus M$ as ${\bf
k}[\partial]$-modules as follows.\vspb
\begin{eqnarray}
[(a+u)_\lambda (b+v)]:=[a_\lambda b]+\rho(a)_\lambda v-\rho(b)_{-\lambda-\partial}u,\;\;a, b\in R, \; u, v\in M.
\end{eqnarray}
Then $R\oplus M$ is a Lie conformal algebra if and only if $(M,\rho)$ is a representation of $R$. The Lie conformal algebra $R\oplus M$ is called the {\bf semi-direct product of $R$ and its representation $(M,\rho)$}. We denote it by $R\ltimes_{\rho}M$.
\end{pro}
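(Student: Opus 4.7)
The plan is to verify directly the three defining axioms of a Lie conformal algebra for the $\lambda$-bracket on $R\oplus M$, and show that while conformal sesquilinearity and skew-symmetry hold with no hypothesis on $\rho$ beyond being a ${\bf k}[\partial]$-module map, the Jacobi identity is equivalent (after a tautological $R$-component) to the condition that $\rho:R\to \mathrm{gc}(M)$ is a homomorphism of Lie conformal algebras, that is, to $(M,\rho)$ being a representation.

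First I would check conformal sesquilinearity. Writing $[(a+u)_\lambda(b+v)]=[a_\lambda b]+\rho(a)_\lambda v-\rho(b)_{-\lambda-\partial}u$, the identities
$[\partial(a+u)_\lambda(b+v)]=-\lambda[(a+u)_\lambda(b+v)]$ and $[(a+u)_\lambda \partial(b+v)]=(\lambda+\partial)[(a+u)_\lambda(b+v)]$
follow from: sesquilinearity of $[\cdot_\lambda\cdot]$ on $R$; the fact that $\rho$ is a ${\bf k}[\partial]$-module homomorphism (so $\rho(\partial a)_\lambda=-\lambda\rho(a)_\lambda$); and the defining property $[\partial,\rho(a)_\lambda]=-\lambda\rho(a)_\lambda$ of conformal linear maps. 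Next, skew-symmetry is immediate: replacing $\lambda$ by $-\lambda-\partial$ interchanges $\rho(a)_\lambda v$ and $\rho(b)_{-\lambda-\partial}u$ up to sign, matching $[a_\lambda b]=-[b_{-\lambda-\partial}a]$ in $R$.

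The substantive step is the Jacobi identity
\[
[(a+u)_\lambda[(b+v)_\mu(c+w)]]=[[(a+u)_\lambda(b+v)]_{\lambda+\mu}(c+w)]+[(b+v)_\mu[(a+u)_\lambda(c+w)]].
\]
I would expand each side using the formula for the bracket, taking care to substitute $\partial$-dependent arguments (like $-\mu-\partial$ or $-\lambda-\partial$) via sesquilinearity. Split both sides into an $R$-component and an $M$-component. The $R$-component reduces to the Jacobi identity already present in $R$, so it vanishes. The $M$-component collects the eight terms of the form $\rho(\cdot)_?(\rho(\cdot)_?\cdot)$ and $\rho([\cdot_\lambda\cdot])_?(\cdot)$; after applying sesquilinearity to normalize variable substitutions, these coalesce to three equations, each of the shape
\[
\rho(a)_\lambda\bigl(\rho(b)_\mu w\bigr)-\rho(b)_\mu\bigl(\rho(a)_\lambda w\bigr)=\rho([a_\lambda b])_{\lambda+\mu}w,
\]
applied respectively with argument $w\in M$ and, via skew-symmetry, with arguments $u$ and $v$. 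This identity is precisely $[\rho(a)_\lambda\,\rho(b)]_{\lambda+\mu}=\rho([a_\lambda b])_{\lambda+\mu}$ in $\mathrm{gc}(M)$, i.e.\ the assertion that $\rho$ is a homomorphism of Lie conformal algebras.

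The main obstacle is bookkeeping in the Jacobi computation: the mixed terms coming from $\rho(c)_{-\mu-\partial}v$ must be handled by interpreting the substitution $\mu\mapsto -\mu-\partial$ as a polynomial substitution where $\partial$ acts on the polynomial coefficients. Once these substitutions are performed carefully (using $[\partial,\rho(a)_\lambda]=-\lambda\rho(a)_\lambda$ repeatedly to commute $\partial$ past the $\rho$-actions), the residual equations match the $\mathrm{gc}(M)$-homomorphism condition in three guises, each equivalent to the others by skew-symmetry of the bracket. Conversely, specializing to $u=v=0$ or $u=w=0$ shows that the Jacobi identity forces $\rho$ to preserve the bracket, so the hypothesis $(M,\rho)$ is a representation is also necessary.
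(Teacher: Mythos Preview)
Your proposal is correct and is the standard direct verification. However, the paper does not supply its own proof of this proposition: it is quoted verbatim as \cite[Proposition 3.1]{HL} and used as a black box, so there is nothing in the paper to compare your argument against. Your outline (sesquilinearity and skew-symmetry from the ${\bf k}[\partial]$-module hypotheses, and the $M$-component of the Jacobi identity reducing to $\rho([a_\lambda b])_{\lambda+\mu}=[\rho(a)_\lambda\rho(b)]_{\lambda+\mu}$) is exactly how one proves this from scratch.
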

We next present the relationship between representations of \gdas and a class of representations of the corresponding Lie conformal algebras.
\begin{pro}\label{corr-resp}
    Let $R={\bf k}[\partial]A$ be the Lie conformal algebra corresponding to a \gda $(A, \circ, [\cdot,\cdot])$ and $M={\bf k}[\partial]V$ which is free as a ${\bf k}[\partial]$-module on $V$. Define
    a ${\bf k}[\partial]$-module homomorphism $\rho_{l_A, r_A, \rho_A}: R\rightarrow \text{gc}(M)$ by
\vspb
    \begin{eqnarray}\mlabel{Conformal module}
        {\rho_{l_A, r_A, \rho_A}(a)}_\lambda v=\partial(r_A(a)v)+\lambda(l_A(a)v+r_A(a)v)+\rho_A(a)v,\;\;\; a\in A, v\in V,
    \end{eqnarray}
    where $l_A$, $r_A$ and $\rho_A: A\rightarrow \text{End}_{{\bf k}}(V)$ are linear maps. Then
    $(M, \rho_{l_A,r_A, \rho_A})$ is a representation of $R$ if and only if $(V, l_A, r_A, \rho_A)$ is a representation of $(A, \circ, [\cdot,\cdot])$.
\end{pro}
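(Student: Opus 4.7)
The plan is to unpack the definition of a representation of a Lie conformal algebra in the language of $\lambda$-actions. By definition, $(M, \rho_{l_A, r_A, \rho_A})$ is a representation of $R$ if and only if $\rho_{l_A, r_A, \rho_A}$ is a Lie conformal algebra homomorphism into $\text{gc}(M)$, that is,
\begin{eqnarray*}
\rho([a_\lambda b])_\mu v = \rho(a)_\lambda \bigl(\rho(b)_{\mu-\lambda} v\bigr) - \rho(b)_{\mu-\lambda}\bigl(\rho(a)_\lambda v\bigr),
\end{eqnarray*}
where I suppress the subscript on $\rho_{l_A, r_A, \rho_A}$. Since $\rho_{l_A, r_A, \rho_A}$ is already assumed to be a $\bfk[\partial]$-module homomorphism, it suffices to check this identity for $a, b\in A$ and $v\in V$, because conformal sesquilinearity and $\bfk[\partial]$-linearity propagate the identity to all of $R$ and $M$.

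Next, I would compute both sides as polynomials in $\lambda$, $\mu$ (and $\partial$ acting on factors in $M$). On the left, substituting $[a_\lambda b] = \partial(b\circ a) + \lambda(a\star b) + [a,b]$ and using $\rho(\partial c) = \partial\rho(c)$ together with $(\partial f)_\mu = -\mu f_\mu$, one obtains a polynomial expression whose coefficients are built from $l_A, r_A, \rho_A$ applied to $b\circ a$, $a\star b$, and $[a,b]$. On the right, expanding $\rho(b)_{\mu-\lambda}v$ and $\rho(a)_\lambda v$ via Eq.~\meqref{Conformal module} and then pushing $\partial$ past $\rho(a)_\lambda$ using $f_\lambda(\partial m) = (\lambda+\partial) f_\lambda m$, one obtains a polynomial expression whose coefficients are compositions $l_A(a)l_A(b)$, $l_A(a)r_A(b)$, $r_A(a)l_A(b)$, $\rho_A(a)\rho_A(b)$, $l_A(a)\rho_A(b)$, etc., minus the corresponding terms with $a$ and $b$ swapped.

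Comparing the coefficients of the independent monomials in $\lambda$, $\mu$, and $\partial$, the identity decomposes into a finite system of scalar equations in $\text{End}_\bfk(V)$. The coefficients of $\lambda\mu$, $\mu^2$, and $\mu$ (with no $\rho_A$-summand on the right) recover the Novikov module axioms \meqref{lef-mod1}--\meqref{Nov-mod2} for $(V, l_A, r_A)$, the $\lambda$-, $\mu$-, $\partial$-independent coefficient recovers the Lie algebra representation condition for $(V, \rho_A)$, and the remaining mixed coefficients (those linear in $\mu$ carrying a $\rho_A$-term, and the $\partial$-carrying ones) recover exactly the compatibility identities \meqref{GF-rep-1} and \meqref{GF-rep-2}. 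In both directions the two conditions are equivalent, proving the proposition.

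The main obstacle is bookkeeping: the expansion produces many terms, and one has to group them by monomial type with care so that the correspondence with the seven axioms of a GD algebra representation is manifest. Structurally the argument parallels Propositions \mref{Correspond-Algebra} and \mref{corr-coalgebra}, where the $\mu$-free part of the $\lambda$-bracket encodes the Lie data, the $\mu$-linear part encodes the Novikov data, and the mixed part encodes the GD compatibility; the same split governs the present calculation, so no new idea beyond careful coefficient comparison is needed.
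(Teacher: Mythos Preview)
Your proposal is correct, but the paper takes a more conceptual route that avoids the coefficient bookkeeping entirely. Instead of expanding the representation identity directly, the paper passes through semi-direct products: by Proposition~\ref{semi-Lie}, $(M,\rho_{l_A,r_A,\rho_A})$ is a representation of $R$ if and only if $R\oplus M$ with the semi-direct product $\lambda$-bracket is a Lie conformal algebra; a short computation shows this $\lambda$-bracket on $R\oplus M=\bfk[\partial](A\oplus V)$ has exactly the form $\partial(\cdot)+\lambda(\cdot\star\cdot)+[\cdot,\cdot]$ for the candidate semi-direct product GD structure on $A\oplus V$; then Proposition~\mref{Correspond-Algebra} says this is a Lie conformal algebra iff $A\oplus V$ is a GD algebra, and Proposition~\ref{semi-GD} says that holds iff $(V,l_A,r_A,\rho_A)$ is a representation of $(A,\circ,[\cdot,\cdot])$. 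Your direct approach is self-contained and makes the correspondence between monomials and axioms explicit, at the cost of a longer computation; the paper's approach trades that computation for three already-proved equivalences, so the only new calculation is recognizing the shape of the $\lambda$-bracket on $R\oplus M$.
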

\begin{proof}
    Let $a$, $b\in A$ and $u$, $v\in V$. Note that
\vspb
    \begin{eqnarray*}
    [(a+u)_\lambda (b+v)]:&=&[a_\lambda b]+\rho_{l_A,r_A,\rho_A}(a)_\lambda v-\rho_{l_A,r_A,\rho_A}(b)_{-\lambda-\partial} u\\
    &=&\partial (b\circ a)+\lambda (a\star b)+[a,b]+\partial(r_A(a)v)+\lambda(l_A(a)v+r_A(a)v)+\rho_A(a)v\\
    &&\quad-(\partial(r_A(b)u)+(-\lambda-\partial)(l_A(b)u+r_A(b)u)+\rho_A(b)u)\\
    &=& \partial(b\circ a+l_A(b)u+r_A(a)v)+\lambda(a\star b+l_A(a)v+r_A(a)v+l_A(b)u+r_A(b)u)\\
    &&\quad+([a,b]+\rho_A(a)v-\rho_A(b)u).
\vspd
    \end{eqnarray*}
    Define
\vspb
$$
(a+u)\bullet (b+v):=b\circ a+l_A(a)v+r_A(b)v,\
[a+u,b+v]:=[a,b]+\rho_A(a)v-\rho_A(b)u,\;\; a, b\in A, \;u, v\in V.
$$
By Proposition \ref{semi-Lie}, $(M, \rho_{l_A,r_A, \rho_A})$ is a representation of $R$ if and only if $R\oplus M$ is a Lie conformal algebra with the  $\lambda$-bracket above. Note that $R\oplus M={\bf k}[\partial](A\oplus V)$ is a Lie conformal algebra if and only if $(A\oplus V, \bullet, [\cdot,\cdot])$ is a \gda by  Proposition \ref{Correspond-Algebra}. Then this conclusion follows by Proposition \ref{semi-GD}.
\vspd
\end{proof}

\begin{rmk}
    Let $R={\bf k}[\partial]A$ be the Lie conformal algebra corresponding to a \gda $(A, \circ, [\cdot,\cdot])$. Then
    the adjoint representation $(R, \ad_R)$ of $R$ is exactly $(R, \rho_{L_{\circ}, R_{\circ}, \ad})$.

    By the proof of Proposition \ref{corr-resp},  $R\ltimes_{\rho_{l_A, r_A,\rho_A}}M$ is just the Lie conformal algebra corresponding to the \gda $A\ltimes_{l_A,r_A,\rho_A} V$. 
\end{rmk}

\begin{pro}\mlabel{dual resp}
    Let $(A, \circ, [\cdot,\cdot])$ be a \gda and $R={\bf k}[\partial]A$ be the Lie conformal algebra corresponding to $(A, \circ, [\cdot,\cdot])$. Suppose that $(V, l_A, r_A, \rho_A)$ is a representation of $(A, \circ, [\cdot,\cdot])$ and $M={\bf k}[\partial]V$  which is free as a ${\bf k}[\partial]$-module on $V$. Then the representation $(M^{\ast c}, \rho_{l_A,r_A,\rho_A}^\ast)$ is just the representation $({\bf k}[\partial]V^\ast, \rho_{l_A^\ast+r_A^\ast, -r_A^\ast, \rho_A^\ast})$.
\end{pro}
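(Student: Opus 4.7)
The plan is to reduce the identification to a pointwise verification on generators and then carry out a single sign-sensitive computation. The main delicate point will be coordinating the sign in the conformal-dual relation with the sign in the transpose of a linear map.

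First, I would identify $M^{\ast c}$ with ${\bf k}[\partial]V^\ast$ via the canonical ${\bf k}[\partial]$-module isomorphism $\phi\colon {\bf k}[\partial]V^\ast \to M^{\ast c}$ sending $f\in V^\ast$ to the conformal linear map defined by $\phi(f)_\mu(v) = f(v)$ for $v\in V$. Since $(\partial\phi(f))_\mu = -\mu\,\phi(f)_\mu$, this extends to $\phi(p(\partial)f)_\mu(v) = p(-\mu)\,f(v)$. Now both $\rho_{l_A,r_A,\rho_A}^{\ast}$ and (the transport along $\phi$ of) $\rho_{l_A^\ast+r_A^\ast,\,-r_A^\ast,\,\rho_A^\ast}$ are ${\bf k}[\partial]$-module homomorphisms $R\to \text{gc}(M)$, and each value $\rho(a)_\lambda$ is a conformal linear map determined by its action on $V\subset M$. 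Hence it suffices to verify
$$\bigl(\rho_{l_A,r_A,\rho_A}^{\ast}(a)_\lambda\, \phi(f)\bigr)_\mu v \;=\; \phi\bigl(\rho_{l_A^\ast+r_A^\ast,\,-r_A^\ast,\,\rho_A^\ast}(a)_\lambda f\bigr)_\mu v$$
for all $a\in A$, $f\in V^\ast$, $v\in V$.

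For the left-hand side I would substitute Eq.~\meqref{Conformal module} into the defining formula $(\rho^{\ast}(a)_\lambda \phi(f))_\mu v = -\phi(f)_{\mu-\lambda}\bigl(\rho_{l_A,r_A,\rho_A}(a)_\lambda v\bigr)$ and use $\phi(f)_{\mu-\lambda}(\partial w) = (\mu-\lambda)\,f(w)$ for $w\in V$. The two contributions involving $r_A(a)v$ collapse via $(\mu-\lambda)+\lambda = \mu$, producing $-\mu\, f(r_A(a)v) - \lambda\, f(l_A(a)v) - f(\rho_A(a)v)$. For the right-hand side, Eq.~\meqref{Conformal module} with $L = l_A^\ast+r_A^\ast$, $R = -r_A^\ast$, $P = \rho_A^\ast$ simplifies to $-\partial\, r_A^\ast(a)f + \lambda\, l_A^\ast(a)f + \rho_A^\ast(a)f$, because $L(a)f + R(a)f = l_A^\ast(a)f$. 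Applying $\phi(\cdot)_\mu v$ converts $-\partial$ into the factor $\mu$, and then the transpose convention $(\varphi^\ast(a)f)(v) = -f(\varphi(a)v)$ turns each of the three terms into precisely the matching expression obtained above.

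The main (and essentially only) obstacle is the sign bookkeeping: the minus sign in the conformal-dual relation $(\partial g)_\lambda = -\lambda\, g_\lambda$, the minus sign in the transpose convention, and the cancellation $(\mu-\lambda)+\lambda = \mu$ coming from conformal sesquilinearity applied to $\partial(r_A(a)v) + \lambda\,r_A(a)v$ must conspire correctly; they do so precisely because the middle slot of $\rho_{l_A^\ast+r_A^\ast,\,-r_A^\ast,\,\rho_A^\ast}$ is chosen to be $-r_A^\ast$ rather than $r_A^\ast$. No further use of the \gda\ axioms or of the compatibility conditions between $l_A$, $r_A$, $\rho_A$ is required; the proposition is ultimately a compatibility check between two sign conventions and the construction already established in Proposition~\mref{corr-resp}.
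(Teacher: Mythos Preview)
Your proof is correct and is essentially identical to the paper's own argument: the paper notes $M^{\ast c}={\bf k}[\partial]V^\ast$, then for $a\in A$, $f\in V^\ast$, $v\in V$ computes $(\rho_{l_A,r_A,\rho_A}^\ast(a)_\lambda f)_\mu v$ exactly as you do, obtaining $-\mu f(r_A(a)v)-\lambda f(l_A(a)v)-f(\rho_A(a)v)$ and then rewriting this as $(\rho_{l_A^\ast+r_A^\ast,-r_A^\ast,\rho_A^\ast}(a)_\lambda f)_\mu v$. Your additional remarks on the identification $\phi$ and on the sign bookkeeping make explicit what the paper leaves tacit, but the route and the computation are the same.
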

\begin{proof}
    Note that $M^{\ast c}={\bf k}[\partial]V^\ast$. Moreover, for all $a\in A$, $f\in V^\ast$ and $v\in V$, we have
\vspb
    \begin{eqnarray*}
        ({\rho_{l_A,r_A,\rho_A}^\ast(a)}_\lambda f)_\mu v&=&-f_{\mu-\lambda}(\rho_{l_A,r_A, \rho_A}(a)_\lambda v)\\
        &=&-f_{\mu-\lambda}(\partial(r_A(a)v)+\lambda(l_A(a)v+r_A(a)v)+\rho_A(a)v)\\
        &=&-(\mu-\lambda)f(r_A(a)v)-\lambda f(l_A(a)v+r_A(a)v)-f(\rho_A(a)v)\\
        &=&-\mu f(r_A(a)v)-\lambda f(l_A(a)v)-f(\rho_A(a)v)\\
        &=& \mu(r_A^\ast(a)f)v+\lambda (l_A^\ast (a)f)v+(\rho_A^\ast(a)f)v\\
        &=&(\partial (-r_A^\ast(a)f)+\lambda((l_A^\ast+r_A^\ast)(a)f-r_A^\ast(a)f)+\rho_A^\ast(a)f)_\mu v\\
        &=&(\rho_{l_A^\ast+r_A^\ast,-r_A^\ast, \rho_A^\ast}(a)_\lambda f)_\mu v.
    \end{eqnarray*}
    Therefore, $\rho_{l_A,r_A, \rho_A}^\ast=\rho_{l_A^\ast+r_A^\ast,-r_A^\ast, \rho_A^\ast}$.
\end{proof}
\vspb
\begin{defi} \mcite{HB} Let $R$ be a Lie conformal algebra and $(M, \rho)$ be a representation of $R$.
If a ${\bf k}[\partial]$-module homomorphism $T: M\rightarrow R$ satisfies
\vspb
\begin{eqnarray*}
[T(u)_\lambda T(v)]=T(\rho(T(u))_\lambda v-\rho(T(v))_{-\lambda-\partial} u),\;\;\; u, v\in M,
\end{eqnarray*}
then $T$ is called an {\bf $\mathcal{O}$-operator} associated to $(M, \rho)$.
\end{defi}

\begin{pro} \cite[Theorem 3.1]{HB}\mlabel{conformal operator1}
Let $R$ be a finite Lie conformal algebra which is free as a ${\bf k}[\partial]$-module and $r\in R\otimes R$ be skew-symmetric. Then $r$ is a solution of the CCYBE in $R$ if and only
$T_0^r=T_\lambda^r|_{\lambda=0}$ is an $\mathcal{O}$-operator associated to $(R^{\ast c}, \ad^\ast)$, where $T^r\in \text{Chom}(R^{\ast c}, R)$ is defined by
\vspc
\begin{eqnarray}
T_\lambda^r(u)=\sum_iu_{-\lambda-\partial}(x_i)y_i,\;\;\; \text{ $u\in R^{\ast c}$, where $r=\sum_i x_i\otimes y_i$.}
\end{eqnarray}
\end{pro}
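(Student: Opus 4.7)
The plan is to establish an explicit bijection between $r\in R\otimes R$ and conformal linear maps $R^{\ast c}\to R$, and then show that this bijection turns the CCYBE identity into the $\calo$-operator identity. Concretely, the assignment $r\mapsto T^r$ with $T_\lambda^r(u)=\sum_i u_{-\lambda-\partial}(x_i)y_i$ gives a map $R\otimes R\to \text{Chom}(R^{\ast c},R)$. Since $R$ is finite and free as a ${\bf k}[\partial]$-module, the conformal pairing $R^{\ast c}\times R\to {\bf k}[\lambda]$ is nondegenerate, making this assignment bijective. The skew-symmetry of $r$ yields the alternative formula $T_\lambda^r(u)=-\sum_i u_{-\lambda-\partial}(y_i)x_i$, which I would use to symmetrize computations.

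The next step is a direct expansion of both sides of the $\calo$-operator identity for $T_0^r$. For $u,v\in R^{\ast c}$, I would expand $[T_0^r(u)_\lambda T_0^r(v)]$ via conformal sesquilinearity into a double sum $\sum_{i,j}$ of scalar factors $u_?(x_i)\,v_?(x_j)$ multiplying $\lambda$-brackets $[y_{i,?}\,y_j]$, with $\lambda$-parameters tracked carefully. Similarly, by unfolding $\ad^\ast$ and $T_0^r$, the right-hand side $T_0^r(\ad^\ast(T_0^r(u))_\lambda v-\ad^\ast(T_0^r(v))_{-\lambda-\partial}u)$ expands into sums of the form $\sum_{i,j}$ of $u(-)\,v(-)\,[x_{i,?}\,x_j]\otimes y_?$-type terms; here the alternative skew-symmetric form of $T^r$ is crucial for bringing both $\ad^\ast$-contributions into a common shape.

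Then I would recognize that pairing $[[r,r]]\in R^{\otimes 3}$ with $u\otimes v$ in its first two slots (with the $\mu$-variables specialized according to the sesquilinearity recipe in the definition of $[[r,r]]$) yields exactly the difference between the left- and right-hand sides of the $\calo$-operator identity for $T_0^r$. The key observation is that the modulo-$\partial^{\otimes 3}$ ambiguity in the CCYBE is precisely absorbed by this pairing together with the specialization $\lambda=0$ in $T_0^r$: any term lying in $\partial^{\otimes 3}(R^{\otimes 3})$ yields a contribution whose $\lambda$-dependence, arising from $u,v\in R^{\ast c}$ turning $\partial$'s into $\lambda$'s via the relation $f_\mu(\partial b)=\mu f_\mu(b)$, vanishes upon setting $\lambda=0$. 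This gives the forward direction; the reverse direction follows because nondegeneracy of the conformal pairing on the free ${\bf k}[\partial]$-module $R$ lets every identity tested against all $u,v\in R^{\ast c}$ be lifted back to a tensor identity modulo $\partial^{\otimes 3}$.

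The main obstacle will be the delicate bookkeeping of $\lambda$-variables and of the ${\bf k}[\partial]$-module structures on iterated tensor products and on $R^{\ast c}$, especially verifying that the specialization $T_0^r=T_\lambda^r|_{\lambda=0}$ corresponds precisely to the reduction modulo $\partial^{\otimes 3}$ in the definition of $[[r,r]]$, and not to some weaker or stronger condition. Once this correspondence is pinned down, the equivalence becomes a matching of coefficients that mirrors the classical Drinfeld argument for the operator form of the CYBE.
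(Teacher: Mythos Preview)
The paper does not prove this proposition at all: it is stated with an external citation to \cite[Theorem~3.1]{HB} and used as a black box, so there is no proof in the paper for your attempt to be compared against.

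That said, your outline is the standard Drinfeld-style argument transported to the conformal setting, and it is the approach taken in the cited reference. One point in your sketch is not quite right as written: you say that a term in $\partial^{\otimes 3}(R^{\otimes 3})$, after pairing with $u,v$ in two slots, ``vanishes upon setting $\lambda=0$.'' But only two of the three $\partial$'s are converted into pairing parameters; the third acts on the remaining $R$-valued slot, so what you actually get is a factor $(\mu_u+\mu_v+\partial)$ rather than something that disappears at a single parameter specialization. The correct mechanism is that the $\mathcal{O}$-operator identity, being an equality of elements of $R[\lambda]$, is invariant under an overall $\partial$-shift in exactly the way that matches the $\partial^{\otimes 3}$-ambiguity once the two pairing parameters are tied to $\lambda$ and $-\lambda-\partial$ via sesquilinearity. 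You correctly flag this correspondence as the main obstacle; getting it right is the entire content of the proof, and your current explanation of it would not survive a careful check.
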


\begin{pro}\cite[Theorem 3.7]{HB}\mlabel{conformal operator2}
Let $R$ be a finite Lie conformal algebra and $(M, \rho)$ be a representation of $R$. Suppose that
$R$ and $M$ are free as ${\bf k}[\partial]$-modules and $T: M\rightarrow R$ is a ${\bf k}[\partial]$-module homomorphism. Let $\{e_i\}_{i=1}^n$ be a ${\bf k}[\partial]$-basis of $R$, $\{v_i\}_{i=1}^m$ be a ${\bf k}[\partial]$-basis of $M$ and $\{v_i^\ast\}_{i=1}^m$ be the dual ${\bf k}[\partial]$-basis in $M^{\ast c}$. Then $T$ is an $\mathcal{O}$-operator associated to $(M, \rho)$ if and only if $r=r_T-\tau r_T$ is a skew-symmetric solution of the CCYBE in the semi-direct product $R\ltimes_{\rho^\ast} M^{\ast c}$, where
\vspc
\begin{eqnarray*}
r_T=\sum_{i=1}^m\sum_{j=1}^na_{ij}(0, \partial\otimes \id) e_j \otimes v_i^\ast,\;\;\; \text{if $T (v_i)=\sum_{j=1}^na_{ij}(0,\partial)e_j$.}
\vspb
\end{eqnarray*}
\end{pro}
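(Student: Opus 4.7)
My plan is to reduce the statement to Proposition~\mref{conformal operator1} applied to the semi-direct product $\tilde R := R \ltimes_{\rho^\ast} M^{\ast c}$. Since $r = r_T - \tau r_T \in \tilde R \otimes \tilde R$ is skew-symmetric by construction, Proposition~\mref{conformal operator1} says that $r$ is a solution of the CCYBE in $\tilde R$ if and only if $T_0^r$ is an $\mathcal{O}$-operator on $\tilde R$ associated to $(\tilde R^{\ast c}, \mathrm{ad}^\ast)$. The task is then to unpack this latter condition and show that it is equivalent to $T$ being an $\mathcal{O}$-operator on $R$ associated to $(M, \rho)$.

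The next step is to decompose the conformal dual. Because $M$ is finitely generated and free over ${\bf k}[\partial]$ with basis $\{v_i\}$, there is a canonical double-dual identification $M^{\ast\ast c} \cong M$ under which the $v_i$ are the duals of the $v_i^\ast$, and hence $\tilde R^{\ast c} = (R \oplus M^{\ast c})^{\ast c} \cong R^{\ast c} \oplus M$ as ${\bf k}[\partial]$-modules. Using the formula $T_\lambda^r(u) = \sum_i u_{-\lambda-\partial}(x_i)\, y_i$ from Proposition~\mref{conformal operator1}, I would compute $T_0^r$ on the two summands separately: for $f \in R^{\ast c}$, only the summands from $r_T = \sum_{i,j} a_{ij}(0, \partial \otimes \id)\, e_j \otimes v_i^\ast$ contribute (since the first tensor factor of $-\tau r_T$ lies in $M^{\ast c}$ and is killed by $f$); for $v \in M \subset \tilde R^{\ast c}$, only the $-\tau r_T$ summands contribute, and the dual-basis pairing forces $T_0^r(v_k) = T(v_k)$, so that $T_0^r|_M$ coincides with $T$ (and $T_0^r|_{R^{\ast c}}$ turns out to be essentially determined by $T$ as well).

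With this identification, the $\mathcal{O}$-operator equation
\[
[T_0^r(\phi)_{\lambda} T_0^r(\psi)] = T_0^r\bigl(\mathrm{ad}^\ast(T_0^r(\phi))_{\lambda}\psi - \mathrm{ad}^\ast(T_0^r(\psi))_{-\lambda-\partial}\phi\bigr)
\]
for $\phi,\psi \in \tilde R^{\ast c}$ splits into four cases according to whether $\phi,\psi$ lie in $R^{\ast c}$ or in $M$. The diagonal case $\phi,\psi \in M$ reduces, via $T_0^r|_M = T$ and the identification of $\mathrm{ad}^\ast$ on $\tilde R^{\ast c}$ with $\rho$ (on the $M$-summand), to $[T(u)_{\lambda} T(v)] = T\bigl(\rho(T(u))_{\lambda} v - \rho(T(v))_{-\lambda-\partial} u\bigr)$, which is precisely the $\mathcal{O}$-operator condition for $T$. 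The remaining three cases should be either automatically satisfied (exploiting that $M^{\ast c}$ is an abelian ideal of $\tilde R$, so brackets $[\cdot_\lambda \cdot]$ of its elements vanish) or follow as formal consequences of the $M \times M$ case after dualising.

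The main obstacle will be the last step: verifying the mixed $R^{\ast c} \times M$ and $R^{\ast c} \times R^{\ast c}$ components of the $\mathcal{O}$-operator equation in $\tilde R$, since these involve $\rho^\ast$-actions combined with the polynomial coefficients $a_{ij}(0,\partial)$ appearing in $r_T$. Careful bookkeeping of where $\partial$ acts in each tensor slot is needed, in particular tracking how the substitution $u_{-\lambda-\partial}(x_i)$ in the definition of $T_\lambda^r$ converts $a_{ij}(0,\partial \otimes \id)$ on the first slot into an evaluation at $-\lambda-\partial$, so that the pairing between $M$ and $M^{\ast c}$ absorbs the polynomial data exactly in the way required to recover the condition $T\rho(T(u))v = [T(u)_\lambda T(v)] + \cdots$ on the nose.
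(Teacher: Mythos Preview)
The paper does not supply its own proof of this proposition: it is quoted verbatim as \cite[Theorem 3.7]{HB}, with no accompanying argument. So there is nothing in the paper to compare your proposal against.

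That said, your reduction strategy is the natural one and is essentially how the result is proved in \cite{HB}: one applies Proposition~\ref{conformal operator1} to the semi-direct product $\tilde R=R\ltimes_{\rho^\ast}M^{\ast c}$, identifies $\tilde R^{\ast c}\cong R^{\ast c}\oplus M$ via the double conformal dual, and then checks that the $\mathcal{O}$-operator condition for $T_0^r$ on $\tilde R$ associated to $(\tilde R^{\ast c},\mathrm{ad}^\ast)$ unpacks case-by-case into the $\mathcal{O}$-operator condition for $T$ on $R$ associated to $(M,\rho)$. Your computation that $T_0^r|_M=T$ (only the $-\tau r_T$ summands survive when paired against $M$, and the dual basis collapses the sum) is correct. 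The remaining cases do go through: the $R^{\ast c}\times R^{\ast c}$ case is automatic since $T_0^r$ lands in the abelian ideal $M^{\ast c}$ on that summand and both sides vanish, while the mixed case is a dualisation of the $M\times M$ case as you anticipate. The bookkeeping you flag as the ``main obstacle'' is genuine but routine once one writes out $\mathrm{ad}^\ast_{\tilde R}$ explicitly in terms of $\mathrm{ad}^\ast_R$, $\rho$, and $\rho^\ast$ on the four blocks.
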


Next, we give a correspondence between $\mathcal{O}$-operators on \gdas and $\mathcal{O}$-operators on the corresponding Lie conformal algebras which follows from a direct check.
\begin{pro}\mlabel{Conf-o-operator}
Let $(A, \circ, [\cdot,\cdot])$ be a \gda and $R={\bf k}[\partial]A$ be the Lie conformal algebra corresponding to $(A, \circ, [\cdot,\cdot])$. Suppose that $(V, l_A, r_A, \rho_A)$ is a representation of $(A, \circ, [\cdot,\cdot])$.
Then $T: V\rightarrow A$ is an $\mathcal{O}$-operator on $(A, \circ, [\cdot,\cdot])$ associated to $(V, l_A, r_A, \rho_A)$ if and only if the ${\bf k}[\partial]$-module homomorphism $\widetilde{T}: {\bf k}[\partial]V\rightarrow {\bf k}[\partial]A$ defined by
\vspb
\begin{eqnarray}
\widetilde{T}(v)=T(v),\;\;v\in V,
\end{eqnarray}
is an $\mathcal{O}$-operator on $R$ associated to $({\bf k}[\partial]V, \rho_{l_A, r_A, \rho_A})$.
\end{pro}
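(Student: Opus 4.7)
The plan is to verify the claimed equivalence by direct computation, reducing both the conformal $\mathcal{O}$-operator identity for $\widetilde{T}$ and the pair of identities (\mref{operr3})--(\mref{operr4}) for $T$ to the same polynomial identity in $\lambda$ and $\partial$ with coefficients in $A$. Since $\widetilde{T}$ is a $\bfk[\partial]$-module homomorphism and the defining identity for an $\mathcal{O}$-operator on a Lie conformal algebra is sesquilinear in its two slots, it suffices to check the identity on generators $u,v\in V\subset \bfk[\partial]V$; conformal sesquilinearity then propagates it to all of $\bfk[\partial]V$.

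The first step is to expand the left-hand side $[\widetilde{T}(u)_\lambda \widetilde{T}(v)] = [T(u)_\lambda T(v)]$ via Proposition~\mref{Correspond-Algebra}, yielding
$$[T(u)_\lambda T(v)] = \partial(T(v)\circ T(u)) + \lambda(T(u)\star T(v)) + [T(u),T(v)].$$
Next, I would expand the right-hand side by applying Eq.~\meqref{Conformal module} to compute $\rho_{l_A,r_A,\rho_A}(T(u))_\lambda v$ and $\rho_{l_A,r_A,\rho_A}(T(v))_{-\lambda-\partial} u$, take their difference, and then apply $\widetilde{T}$. The term $(-\lambda-\partial)$ in the second piece combines cleanly with the $\partial$-term coming from conformal sesquilinearity to produce, after simplification,
$$\partial\, T\!\left(l_A(T(v))u + r_A(T(u))v\right) + \lambda\, T\!\left(l_A(T(u))v + r_A(T(u))v + l_A(T(v))u + r_A(T(v))u\right) + T\!\left(\rho_A(T(u))v - \rho_A(T(v))u\right).$$

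The final step is to compare coefficients of $1$, $\partial$, and $\lambda$ on both sides. Because $R = \bfk[\partial]A$ is free as a $\bfk[\partial]$-module, these three coefficients are independent elements of $A$, so the single conformal identity splits into three scalar identities. The constant term is exactly Eq.~\meqref{operr4}; the $\partial$-coefficient, after swapping $u\leftrightarrow v$, is exactly Eq.~\meqref{operr3}; and the $\lambda$-coefficient is automatic from the $\partial$-coefficient together with the definition $a\star b = a\circ b + b\circ a$. This gives both implications of the equivalence at once. There is no genuine obstacle here beyond careful bookkeeping of the sesquilinearity signs — the main thing to watch is the $(-\lambda-\partial)$ substitution, which must be carried out before coefficients are compared to avoid spurious cross terms.
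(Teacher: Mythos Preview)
Your proposal is correct and is precisely the ``direct check'' that the paper alludes to but omits. The expansion of both sides and the comparison of the $1$, $\partial$, and $\lambda$ coefficients are all accurate, and your observation that the $\lambda$-coefficient follows automatically from the $\partial$-coefficient (applied twice) via the definition of $\star$ is the right way to close the argument.
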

Similar to Corollary~\mref{cor:r}, there are the following two commutative diagrams.
\vspb
\begin{displaymath}
\xymatrix{
\txt{\tiny $r$\\ \tiny a skew-symmetric solution of \\ \tiny the GDYBE in $(A, \circ, [\cdot,\cdot])$ } \ar[rr]^-{\rm Prop. \mref{oper-form1}}   \ar@{<->}[d]_-{\rm Prop. \mref{conformal Yang-Baxter equation}}&& \txt{\tiny $T^r$\\ \tiny an $\mathcal{O}$-operator on \\ \tiny $(A, \circ, [\cdot,\cdot])$ associated to\\ \tiny $(A^\ast, L_{\star}^\ast, -R_{\circ}^\ast, \ad^\ast)$} \ar@{<->}[d]_-{\rm Prop. \mref{Conf-o-operator}}\\
\txt{\tiny $r$\\ \tiny a skew-symmetric solution of \\ \tiny the CCYBE in ${\bf k}[\partial]A$ } \ar[rr]^-{\rm
Prop. \mref{conformal operator1}}
            && \txt{\tiny $\widetilde{T^r}$\\ \tiny an $\mathcal{O}$-operator on ${\bf k}[\partial]A$ \\\tiny associated to $({\bf k}[\partial]A^\ast, \rho_{L_{\star}^\ast, -R_{\circ}^\ast, \ad^\ast})$}}
\vspb
\end{displaymath}
\begin{displaymath}
\xymatrix{
\txt{\tiny $T$\\ \tiny an $\mathcal{O}$-operator on $(A,\circ, [\cdot,\cdot])$ \\\tiny associated to
$(V,l_A,r_A, \rho_A)$ } \ar[rr]^-{\rm Prop. \mref{oper-form2}}   \ar@{<->}[d]_-{\rm  Prop. \mref{Conf-o-operator}}&& \txt{\tiny $r=r_T-\tau r_T$ \\ \tiny a skew-symmetric solution\\
\tiny of the GDYBE in
$(A\ltimes_{l_A^\ast+r_A^\ast,-r_A^\ast, \rho_A^\ast} V^\ast, \bullet, [\cdot,\cdot])$} \ar@{<->}[d]_-{\rm Prop. \mref{conformal Yang-Baxter equation}}\\
\txt{\tiny $\widetilde{T}$\\ \tiny an $\mathcal{O}$-operator on $R={\bf k}[\partial]A$ \\ \tiny associated to $({\bf k}[\partial]V^\ast, \rho_{l_A^\ast+r_A^\ast,-r_A^\ast, \rho_A^\ast})$} \ar[rr]^-{\rm
Prop. \mref{conformal operator2}}
            && \txt{\tiny $r=r_T-\tau r_T$\\ \tiny a skew-symmetric solution of\\ \tiny the CCYBE in $R \ltimes_{\rho_{l_A^\ast+r_A^\ast,-r_A^\ast, \rho_A^\ast}} {\bf k}[\partial]V^\ast$ }}
\vspb
\end{displaymath}

\begin{defi}\mcite{HL2}
A {\bf left-symmetric conformal algebra} $B$ is a ${\bf k}[\partial]$-module endowed with a ${\bf k}$-linear map $\cdot_\lambda \cdot: B\otimes B\rightarrow B[\lambda]$ such that
\vspb
$$(\partial a)_\lambda b=-\lambda a_\lambda b,\;\; a_\lambda (\partial b)=(\partial+\lambda) a_\lambda b,
 (a_\lambda b)_{\lambda+\mu}c -a_\lambda (b_\mu c)=(b_\mu a)_{\lambda+\mu}c-b_\mu (a_\lambda c),\;\;\; a, b\in B.
$$
\end{defi}

A correspondence between pre-\gdas and a class of left-symmetric conformal algebras is given as follows.
\begin{pro}\cite[Theorem 2.21]{XH}\mlabel{corr-left-symm-1}
Let $B={\bf k}[\partial]A$ be a free ${\bf k}[\partial]$-module
over a vector space $A$. Then $B$ is a left-symmetric
conformal algebra with the following $\lambda$-products \vspb
\begin{equation}\mlabel{115} a_\lambda
b=\partial (b\lhd a)+\lambda (a\rhd b+b\lhd a)+a\diamond b,\;\;\; ~~a, b\in
A,\end{equation}
where $\lhd$,  $\rhd$ and $\diamond$ are three binary operations on $A$ if and only if $(A, \lhd, \rhd, \diamond)$ is a pre-\gda. We call that $B$ is {\bf the left-symmetric conformal algebra corresponding to $(A, \lhd, \rhd, \diamond)$}.
\vspb
\end{pro}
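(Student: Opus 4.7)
The plan is to verify the left-symmetric conformal algebra axioms for $B=\bfk[\partial]A$ by expanding the $\lambda$-product given by~\meqref{115} and matching the resulting polynomial coefficients against the defining identities of a pre-\gda. First, the sesquilinearity conditions $(\partial a)_\lambda b=-\lambda a_\lambda b$ and $a_\lambda(\partial b)=(\partial+\lambda)a_\lambda b$ are automatic once we declare that formula~\meqref{115} is specified on generators $a,b\in A$ and then extended by sesquilinearity to all of $B=\bfk[\partial]A$. Thus the remaining task is to analyze the left-symmetric conformal identity
$$(a_\lambda b)_{\lambda+\mu}c-a_\lambda(b_\mu c)=(b_\mu a)_{\lambda+\mu}c-b_\mu(a_\lambda c),\qquad a,b,c\in A.$$

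Substituting formula~\meqref{115} into each composite $\lambda$-product and repeatedly invoking sesquilinearity (so that, for instance, $a_\lambda(\partial(c\lhd b))=(\partial+\lambda)a_\lambda(c\lhd b)$ and $(\partial(b\lhd a))_{\lambda+\mu}c=-(\lambda+\mu)(b\lhd a)_{\lambda+\mu}c$), both sides become polynomials in the three algebraically independent variables $\lambda,\mu,\partial$ with coefficients in $A$. The conformal identity is therefore equivalent to the vanishing of the coefficient of every monomial $\lambda^i\mu^j\partial^k$, and this is exactly the mechanism used in the proof of Proposition~\mref{Correspond-Algebra} (and again in Proposition~\mref{corr-coalgebra}) for the Lie conformal setting.

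I would organize the coefficient comparison by which variables appear. The top-degree coefficients, carrying factors of $\partial^2$, $\lambda^2$, $\mu^2$, $\lambda\mu$, $\lambda\partial$, and $\mu\partial$, involve only the pre-Novikov operations $\lhd$ and $\rhd$, and matching them yields precisely the four pre-Novikov axioms~\meqref{ND1}--\meqref{ND4}: the $\partial^2$ term gives~\meqref{ND4}, the $\lambda^2$ and $\mu^2$ terms give~\meqref{ND3}, the $\lambda\mu$ term gives~\meqref{ND1}, and the $\lambda\partial$ and $\mu\partial$ terms give~\meqref{ND2}. The mixed coefficients of degree one in a single variable, in which $\diamond$ appears exactly once, yield the compatibility axioms~\meqref{lnd1} and~\meqref{lnd2}. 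Finally, the constant coefficient (free of $\lambda,\mu,\partial$) reduces to the left-symmetric algebra identity for $(A,\diamond)$, which is automatic from the pre-\gda axioms via the fact that $(A,\diamond-\tau\diamond)$ is the Lie part of the associated \gda and $\diamond$ is left-symmetric.

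The main obstacle is purely organizational: each of the four terms in the conformal identity expands into roughly a dozen summands after two applications of~\meqref{115} and the use of sesquilinearity, so one must tabulate the coefficients of some ten monomials in $\lambda,\mu,\partial$ and confirm that each pre-\gda axiom appears exactly once, with no residual identities left over. Once the tabulation is in place, both implications follow from the same calculation since the equivalence is monomial-by-monomial; in particular, the converse direction (pre-\gda implies left-symmetric conformal algebra) is obtained by reading the coefficient identities back into a single polynomial equality in $\lambda,\mu,\partial$.
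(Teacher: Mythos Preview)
The paper does not give its own proof of this proposition; it is quoted from \cite{XH}. Your direct coefficient-matching verification is the standard approach and is almost certainly what \cite{XH} does, so in spirit your proposal is correct and aligned with the intended argument.

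Two small points deserve tightening. First, your treatment of the constant term is slightly misphrased: the left-symmetric identity for $(A,\diamond)$ is not ``automatic from the pre-\gda axioms via the fact that $(A,\diamond-\tau\diamond)$ is the Lie part of the associated \gda''---it \emph{is} one of the defining axioms of a pre-\gda (the definition requires $(A,\diamond)$ to be a left-symmetric algebra). So the constant-coefficient match is a genuine independent condition, on equal footing with the others. Second, the precise one-to-one assignment you give (e.g.\ ``the $\lambda\mu$ term gives \meqref{ND1}'', ``the $\lambda$ coefficient gives \meqref{lnd2}'') is optimistic: in practice several of the degree-one and degree-two coefficients produce linear combinations of the axioms rather than single axioms, and some give the same axiom twice by the $(a,\lambda)\leftrightarrow(b,\mu)$ symmetry of the left-symmetric conformal identity. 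What you need to check---and what you correctly identify as the organizational burden---is that the full system of coefficient equalities is equivalent to the full list of pre-\gda axioms, not that each monomial isolates one axiom. With those clarifications the argument is complete.
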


\begin{pro}\mlabel{O-oper-left-symm}
\begin{enumerate}
\item  \cite[Proposition 2.5]{HL2} Let $B$ be a left-symmetric conformal algebra. Then the $\lambda$-brackets
\vspb
\begin{eqnarray}
[a_\lambda b]=a_\lambda b-b_{-\lambda-\partial}a,\;\;\; a, b \in B,
\end{eqnarray}
define a Lie conformal algebra structure on $B$, denoted by $\mathfrak{g}(B)$, which is called the {\bf sub-adjacent Lie conformal algebra of $B$}. Therefore, the identity map $\id$ on $B$ is an $\mathcal{O}$-operator on $\mathfrak{g}(B)$ associated with the representation $(B, L_B)$, where
\vspb
\begin{eqnarray}\label{eq:sss}
{L_B(a)}_\lambda b=a_\lambda b, \;\; a, b\in B.
\end{eqnarray}

\item \cite[Corollary 4.6]{HB}
Let $T$ be an $\mathcal{O}$-operator  on a Lie conformal algebra $R$ associated to a representation $(M, \rho)$. Then there is a left-symmetric conformal algebra structure on $M$ defined by
\vspc
\begin{eqnarray}
u_\lambda v=\rho(T(u))_\lambda v,\;\;\; u, v\in M.
\vspb
\end{eqnarray}
\end{enumerate}
\end{pro}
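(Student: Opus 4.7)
Both parts follow by direct verification from the definitions, using the left-symmetric identity for Part (1) and the $\mathcal{O}$-operator condition combined with the fact that $\rho$ is a homomorphism of Lie conformal algebras for Part (2).

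For Part (1), I would first check that $[a_\lambda b] := a_\lambda b - b_{-\lambda-\partial} a$ satisfies the three axioms of a Lie conformal algebra. Skew-symmetry is immediate from the definition, and conformal sesquilinearity is inherited from the sesquilinearity of $\cdot_\lambda \cdot$ on $B$. For the Jacobi identity, I would expand $[a_\lambda[b_\mu c]]$, $[[a_\lambda b]_{\lambda+\mu} c]$ and $[b_\mu[a_\lambda c]]$ via the bracket definition and then apply the left-symmetric identity $(a_\lambda b)_{\lambda+\mu} c - a_\lambda(b_\mu c) = (b_\mu a)_{\lambda+\mu} c - b_\mu(a_\lambda c)$ (once with the pair $(a,b)$ and once after swapping). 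The four pairs of crossed terms group into telescoping packages that cancel by exactly this identity. Showing that $(B,L_B)$ is a representation of $\mathfrak g(B)$, i.e., that $L_B \colon \mathfrak g(B) \to \mathrm{gc}(B)$ is a homomorphism of Lie conformal algebras, amounts to the same calculation: $[L_B(a)_\lambda L_B(b)]_\mu v = L_B([a_\lambda b])_\mu v$ reduces again to the left-symmetric identity applied to $a,b,v$. Finally, the claim that $\id$ is an $\mathcal O$-operator unwinds tautologically: the defining equation $[\id(u)_\lambda \id(v)] = \id\bigl(L_B(\id(u))_\lambda v - L_B(\id(v))_{-\lambda-\partial} u\bigr)$ is simply $[u_\lambda v] = u_\lambda v - v_{-\lambda-\partial} u$, which is the chosen bracket.

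For Part (2), I would verify that $u_\lambda v := \rho(T(u))_\lambda v$ is a left-symmetric conformal product on $M$. Conformal sesquilinearity is essentially immediate: since $T$ is a $\bfk[\partial]$-module map, $(\partial u)_\lambda v = \rho(\partial T(u))_\lambda v = -\lambda\, \rho(T(u))_\lambda v = -\lambda(u_\lambda v)$, while $u_\lambda (\partial v) = \rho(T(u))_\lambda(\partial v) = (\lambda+\partial)(u_\lambda v)$ follows from the fact that $\rho(T(u))_\lambda$ is a conformal endomorphism of $M$. For the left-symmetric identity, I would compute both sides of
\[
(u_\lambda v)_{\lambda+\mu} w - u_\lambda(v_\mu w) = (v_\mu u)_{\lambda+\mu} w - v_\mu(u_\lambda w).
\]
The term $u_\lambda(v_\mu w)$ equals $\rho(T(u))_\lambda \rho(T(v))_\mu w$, and $(u_\lambda v)_{\lambda+\mu} w = \rho\bigl(T(\rho(T(u))_\lambda v)\bigr)_{\lambda+\mu} w$. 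The $\mathcal O$-operator identity gives
\[
T\bigl(\rho(T(u))_\lambda v\bigr) = [T(u)_\lambda T(v)] + T\bigl(\rho(T(v))_{-\lambda-\partial} u\bigr),
\]
and applying $\rho(\cdot)_{\lambda+\mu} w$, together with the fact that $\rho$ is a homomorphism of Lie conformal algebras (so that $\rho([T(u)_\lambda T(v)])_{\lambda+\mu} w = \rho(T(u))_\lambda \rho(T(v))_\mu w - \rho(T(v))_\mu \rho(T(u))_\lambda w$), yields precisely the right-hand side modulo recognizing that $\rho\bigl(T(\rho(T(v))_{-\lambda-\partial} u)\bigr)_{\lambda+\mu} w = (v_\mu u)_{\lambda+\mu} w$.

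The main obstacle, and the step requiring the most care, is the last identification: the substitution $-\lambda-\partial$ inside the argument of $T$ must be correctly propagated through $T$ (a $\bfk[\partial]$-module map) and then through the outer conformal action $\rho(\cdot)_{\lambda+\mu} w$. This is a standard bookkeeping in the conformal algebra literature, but it is where all the signs and shifts come from, and it is the conceptual crux of why the left-symmetric identity emerges from the Lie-algebraic $\mathcal O$-operator condition. Everything else is routine checking against the axioms.
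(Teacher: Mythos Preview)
Your proof plan is correct and complete. The paper itself does not supply a proof of this proposition: both parts are simply quoted from the cited references \cite[Proposition~2.5]{HL2} and \cite[Corollary~4.6]{HB}, with no argument given in the text. Your direct verification --- checking sesquilinearity and skew-symmetry first, then reducing the Jacobi identity (respectively the left-symmetric identity) to repeated applications of the left-symmetric axiom (respectively the $\mathcal{O}$-operator condition together with the homomorphism property of $\rho$) --- is exactly the standard argument one finds in those sources. The one step you flag as delicate, namely tracking the substitution $-\lambda-\partial$ through $T$ and then through $\rho(\cdot)_{\lambda+\mu}$ so that it becomes $\mu$, is handled correctly: $T$ is $\bfk[\partial]$-linear and $(\partial f)_\nu = -\nu f_\nu$ in $\mathrm{gc}(M)$, so $-\lambda-\partial \mapsto -\lambda-(-(\lambda+\mu)) = \mu$, giving precisely $(v_\mu u)_{\lambda+\mu} w$ as you claim.
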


\begin{pro}\mlabel{pro-Lie-conf-1}
Let $(A, \lhd, \rhd, \diamond)$ be a pre-\gda and $(A, \circ, [\cdot,\cdot])$ be the associated \gda. Let $B$ be the left-symmetric conformal algebra corresponding to $(A, \lhd, \rhd, \diamond)$. Then the sub-adjacent Lie conformal algebra of $B$ is just the Lie conformal algebra corresponding to $(A, \circ, [\cdot,\cdot])$.
\vspb
\end{pro}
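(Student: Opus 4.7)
The statement is a direct computation: both candidate Lie conformal algebra structures live on the same free $\bfk[\partial]$-module $B = \bfk[\partial]A$, and since the $\lambda$-bracket in each case is sesquilinear, it suffices to verify that the two $\lambda$-brackets agree on generators $a, b \in A$.

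The plan is as follows. First I would recall the two formulas to be compared. By Proposition \mref{Correspond-Algebra}, the Lie conformal algebra corresponding to the associated \gda $(A,\circ,[\cdot,\cdot])$ has bracket
\[
[a_\lambda b] = \partial(b\circ a) + \lambda(a\star b) + [a,b],
\]
with $\circ$ and $[\cdot,\cdot]$ given by Eq.~\meqref{GD1}. On the other hand, by Proposition \mref{O-oper-left-symm}(1), the sub-adjacent Lie conformal algebra $\mathfrak g(B)$ has bracket $[a_\lambda b] = a_\lambda b - b_{-\lambda-\partial}a$, where the left-symmetric conformal product is given by Eq.~\meqref{115}:
\[
a_\lambda b = \partial(b\lhd a) + \lambda(a\rhd b + b\lhd a) + a\diamond b.
\]

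Next I would expand $b_{-\lambda-\partial}a$ by first writing the polynomial $b_\mu a = \partial(a\lhd b) + \mu(b\rhd a + a\lhd b) + b\diamond a$ and substituting $\mu = -\lambda-\partial$, letting $\partial$ act to the left on the coefficients. The key cancellation is that the $\partial(a\lhd b)$ term is exactly killed by the $-\partial$-part acting on the coefficient of $\mu$, leaving
\[
b_{-\lambda-\partial}a = -\lambda(b\rhd a + a\lhd b) - \partial(b\rhd a) + b\diamond a.
\]
Subtracting and collecting the coefficients of $\partial$, $\lambda$, and the constant term, one obtains $\partial(b\lhd a + b\rhd a) + \lambda(a\lhd b + a\rhd b + b\lhd a + b\rhd a) + (a\diamond b - b\diamond a)$. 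Identifying this with the formulas $a\circ b = a\lhd b + a\rhd b$, $a\star b = a\circ b + b\circ a$, and $[a,b] = a\diamond b - b\diamond a$ from Eq.~\meqref{GD1}, one recovers precisely $\partial(b\circ a) + \lambda(a\star b) + [a,b]$.

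There is no substantive obstacle: the argument is a one-line bookkeeping check, and the only subtlety is the careful handling of $\partial$ in the substitution $\mu = -\lambda-\partial$, which relies on the conformal sesquilinearity built into the definition of $a_\lambda b$ via Eq.~\meqref{115}. Having verified agreement of the $\lambda$-brackets on $A$ and extended by sesquilinearity to all of $B = \bfk[\partial]A$, the conclusion follows.
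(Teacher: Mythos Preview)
Your proposal is correct and follows exactly the same approach as the paper: compute $a_\lambda b - b_{-\lambda-\partial}a$ on generators using Eq.~\meqref{115}, simplify, and identify the result with $\partial(b\circ a)+\lambda(a\star b)+[a,b]$ via Eq.~\meqref{GD1}. The paper's proof is this same three-line computation.
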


\begin{proof}
Let $a$, $b\in A$. Then we have
\vspb
\begin{eqnarray*}
[a_\lambda b]&=&a_\lambda b-b_{-\lambda-\partial}a\\
&=& \partial (b\lhd a)+\lambda (a\rhd b+b\lhd a)+a\diamond b-(\partial (a\lhd b)+(-\lambda-\partial) (b\rhd a+a\lhd b)+b\diamond a)\\
&=& \partial(b\lhd a+b\rhd a)+\lambda (a\rhd b+a\lhd b+b\rhd a+b\lhd a)+(a\diamond b-b\diamond a)\\
&=& \partial(b\circ a)+\lambda(a\star b)+[a,b].
\vspb
\end{eqnarray*}
Hence we obtain the conclusion.
\end{proof}

\begin{pro}
Let $R={\bf k}[\partial]A$ be the Lie conformal algebra corresponding to a \gda $(A,\circ, [\cdot,\cdot])$. Suppose that $T$ is an $\mathcal
O$-operator on $(A,\circ, [\cdot,\cdot])$ associated to a representation
$(V,l_A,r_A, \rho_A)$ and $(V,\rhd,\lhd, \diamond)$ be the pre-\gda defined by
Eq.~\meqref{eq:ndend}. Then the left-symmetric conformal structure on
${\bf k}[\partial]V$ induced from the $\mathcal O$-operator $\widetilde{T}$ on the Lie conformal algebra $R$ associated to the
representation $({\bf k}[\partial]V, \rho_{l_A, r_A, \rho_A})$ is exactly the one
corresponding to the pre-\gda $(V,\rhd,\lhd, \diamond)$.
\end{pro}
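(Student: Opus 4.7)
The plan is to show the claim by direct computation, unpacking the two induced structures on $\bfk[\partial]V$ and verifying that they agree. Both structures are $\bfk[\partial]$-bilinear (more precisely, satisfy the sesquilinearity axioms of a left-symmetric conformal algebra), so it suffices to compare their $\lambda$-products on generators $u, v \in V$.

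First I would compute the left-symmetric conformal structure induced from the $\mathcal{O}$-operator $\widetilde{T}$ on $R$ associated to $(\bfk[\partial]V, \rho_{l_A, r_A, \rho_A})$. By Proposition~\mref{O-oper-left-symm}(2), this structure is given by
\[
u_\lambda v = \rho_{l_A, r_A, \rho_A}(\widetilde{T}(u))_\lambda v, \qquad u, v \in V.
\]
Since $\widetilde{T}(u) = T(u) \in A$, I substitute the explicit formula~\meqref{Conformal module} for $\rho_{l_A,r_A,\rho_A}$ to get
\[
u_\lambda v = \partial(r_A(T(u))v) + \lambda(l_A(T(u))v + r_A(T(u))v) + \rho_A(T(u))v.
\]

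Next I would translate the right-hand side via the definition of the pre-\gda structure on $V$ induced by $T$ in Theorem~\mref{NYB-ND}(a): $u \rhd v = l_A(T(u))v$, $v \lhd u = r_A(T(u))v$, and $u \diamond v = \rho_A(T(u))v$. Substituting these,
\[
u_\lambda v = \partial(v \lhd u) + \lambda(u \rhd v + v \lhd u) + u \diamond v.
\]
This is exactly the $\lambda$-product of the left-symmetric conformal algebra corresponding to the pre-\gda $(V, \lhd, \rhd, \diamond)$, as given by Eq.~\meqref{115} in Proposition~\mref{corr-left-symm-1}. Hence the two left-symmetric conformal structures on $\bfk[\partial]V$ coincide.

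There is no real obstacle: the whole argument is a one-line matching of formulas once the correct identifications are made. The only point requiring minor care is ensuring the variance conventions line up, namely that the $r_A$-term in $\rho_{l_A,r_A,\rho_A}$ corresponds to $v \lhd u$ (not $u \lhd v$), which is correct since $u \lhd v$ was defined as $r_A(T(v))u$.
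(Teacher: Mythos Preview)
Your proof is correct and essentially identical to the paper's own argument: both simply compute $u_\lambda v = \rho_{l_A,r_A,\rho_A}(\widetilde{T}(u))_\lambda v$, expand via Eq.~\meqref{Conformal module}, and recognize the result as Eq.~\meqref{115} after substituting the pre-\gda operations from Theorem~\mref{NYB-ND}(a). Your remark about the variance convention for $\lhd$ is exactly the one place where care is needed, and you handled it correctly.
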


\begin{proof}
Let $u$, $v\in V$. Then we have
\vspb
\begin{eqnarray*}
u_\lambda v&=&\rho_{l_A,r_A,\rho_A}(\widetilde{T}(u))_\lambda v\\
&=&\partial (r_A(T(u))v)+\lambda(l_A(T(u))v+r_A(T(u))v)+\rho_A(T(u))v\\
&=& \partial (v\lhd u)+\lambda (u\rhd v+v\lhd u)+u\diamond v.
\vspb
\end{eqnarray*}
Therefore the conclusion holds.
\end{proof}

\begin{cor}
Let $(A, \lhd, \rhd, \diamond)$ be a pre-\gda and $(A, \circ,
[\cdot,\cdot])$ be the associated \gda. Let $B={\bf k}[\partial]A$
be the left-symmetric conformal algebra corresponding to $(A,
\lhd, \rhd, \diamond)$. Thus $\mathfrak{g}(B)$ is the Lie
conformal algebra corresponding to $(A, \circ, [\cdot,\cdot])$.
Then the $\mathcal O$-operator $\widetilde{\id_A}$ on the Lie
conformal algebra $R$ associated to $(R, \rho_{L_\rhd,R_\lhd,
L_\diamond})$ is exactly $\id_{\mathfrak{g}(B)}$ as the $\mathcal
O$-operator on $\mathfrak{g}(B)$ associated to the representation
$(B, L_B)$, where $L_B$ is defined by
Eq.~(\ref{eq:sss}).
\end{cor}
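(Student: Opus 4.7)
The plan is to observe that all objects and maps in sight literally coincide once the previous propositions are applied, and then the identification of the two $\mathcal O$-operators is essentially tautological. First I would note that by Proposition~\mref{pro-Lie-conf-1} the sub-adjacent Lie conformal algebra $\mathfrak{g}(B)$ is the Lie conformal algebra $R={\bf k}[\partial]A$ corresponding to $(A,\circ,[\cdot,\cdot])$; in particular they have the same underlying ${\bf k}[\partial]$-module $R=B={\bf k}[\partial]A$. Consequently $\id_{\mathfrak{g}(B)}$ and $\widetilde{\id_A}$ are the same ${\bf k}[\partial]$-module homomorphism, namely the identity map on ${\bf k}[\partial]A$.

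Next I would verify that the two representations $(R,\rho_{L_\rhd,R_\lhd,L_\diamond})$ and $(B,L_B)$ coincide. Using the formula \meqref{Conformal module} with $(l_A,r_A,\rho_A)=(L_\rhd,R_\lhd,L_\diamond)$ we have, for $a,b\in A$,
\[
\rho_{L_\rhd,R_\lhd,L_\diamond}(a)_\lambda b=\partial(b\lhd a)+\lambda(a\rhd b+b\lhd a)+a\diamond b,
\]
while on the left-symmetric conformal algebra side, by \meqref{115} and the definition of $L_B$ in Proposition~\mref{O-oper-left-symm},
\[
L_B(a)_\lambda b=a_\lambda b=\partial(b\lhd a)+\lambda(a\rhd b+b\lhd a)+a\diamond b.
\]
Both expressions agree on generators of the free ${\bf k}[\partial]$-module, and both maps are ${\bf k}[\partial]$-module homomorphisms (for the first, by Proposition~\mref{corr-resp}; for the second, by conformal sesquilinearity), so the two representations are identical.

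Finally I would record why each of the two maps is an $\mathcal O$-operator in its respective setting. By Theorem~\mref{NYB-ND}\,\meqref{it:2}, the identity $\id_A$ is an $\mathcal O$-operator on $(A,\circ,[\cdot,\cdot])$ associated to $(A,L_\rhd,R_\lhd,L_\diamond)$, and so by Proposition~\mref{Conf-o-operator}, $\widetilde{\id_A}$ is an $\mathcal O$-operator on $R$ associated to $(R,\rho_{L_\rhd,R_\lhd,L_\diamond})$. On the other hand, by Proposition~\mref{O-oper-left-symm}\,(a), $\id_B=\id_{\mathfrak{g}(B)}$ is an $\mathcal O$-operator on $\mathfrak{g}(B)$ associated to $(B,L_B)$. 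Since the two underlying maps coincide and the two representations coincide, the two $\mathcal O$-operators coincide. There is no real obstacle here; the content of the corollary is simply that the constructions of Theorem~\mref{NYB-ND} and Proposition~\mref{O-oper-left-symm} are compatible under the correspondences of Propositions~\mref{corr-resp} and~\mref{pro-Lie-conf-1}, and the only thing to guard against is a notational mismatch in the definitions of $\rho_{L_\rhd,R_\lhd,L_\diamond}$ and of the $\lambda$-product on $B$.
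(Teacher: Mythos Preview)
Your proof is correct and follows the same approach as the paper. The paper's proof is terser, writing only ``Obviously, $\widetilde{\id_A}=\id_{\mathfrak{g}(B)}$. Then it follows directly from Proposition~\mref{pro-Lie-conf-1},'' but the implicit step---that the representations $(R,\rho_{L_\rhd,R_\lhd,L_\diamond})$ and $(B,L_B)$ agree---is exactly the computation you spell out, and this computation already appears in the paper as the proof of the unnumbered proposition immediately preceding the corollary (specialized to $T=\id_A$).
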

\begin{proof}
Obviously, $\widetilde{\id_A}=\id_{\mathfrak{g}(B)}$. Then it follows directly from Proposition \mref{pro-Lie-conf-1}.
\end{proof}

Furthermore, considering the sufficient and necessary conditions in Propositions~\mref{Conf-o-operator} and~\mref{corr-left-symm-1},
 we have the following commutative diagram.
\vspb
 \begin{displaymath}
\xymatrix{
\txt{\tiny $(A, \lhd, \rhd, \diamond)$\\ \tiny a pre-\gda } \ar[rr]^-{\rm Prop. \mref{GD-PGD} }  \ar@{<->}[d]_-{\rm Prop. \mref{corr-left-symm-1}}&& \txt{\tiny $\id_A$\\ \tiny an $\mathcal{O}$-operator on $(A, \circ, [\cdot,\cdot])$ \\ \tiny associated to $(A, L_\rhd, R_\lhd, L_{\diamond})$} \ar@{<->}[d]_-{\rm Prop. \mref{Conf-o-operator}}\\
\txt{\tiny $B={\bf k}[\partial]A$\\ \tiny a left-symmetric conformal algebra } \ar[rr]^-{\rm
Prop. \mref{O-oper-left-symm}}
            && \txt{ \tiny $\id_{\mathfrak{g}(B)}$\\ \tiny an $\mathcal{O}$-operator on $\mathfrak{g}(B)$ \\ \tiny associated to $(B, L_B)$ }}
\end{displaymath}

Combining the results above, we have

\begin{thm}\mlabel{Constr-Lie-Conf-pre-Nov}
Let $(A, \lhd, \rhd, \diamond)$ be a pre-\gda and $(A, \circ, [\cdot,\cdot])$ be the
associated \gda. Let $R={\bf k}[\partial]A$ be the Lie conformal algebra corresponding to $(A, \circ, [\cdot,\cdot])$. Define $r$ by Eq.~(\mref{eq:solu}), which is
a skew-symmetric
solution of the GDYBE in the \gda
$\hat A= A\ltimes_{L_\rhd^\ast+R_\lhd^\ast, -R_\lhd^\ast, L_\diamond^\ast}A^\ast$. Then there is a Lie conformal bialgebra structure $\delta$ on the Lie conformal algebra $\hat{R}={\bf k}[\partial]\hat A$ corresponding to the \gda $\hat A$ with $\delta$ defined by Eq.~(\mref{coboundary}).
Moreover, it is exactly the one corresponding to the \gdba $(\hat A, \circ, [\cdot,\cdot], \Delta, \delta_0)$ with $\delta$ defined by Eq.~(\mref{cobracket}), where $\Delta=\Delta_r$ and $\delta_0=\delta_r$ are defined by
Eqs. (\mref{coNov}) and (\mref{coLie}) respectively through $r$.
\end{thm}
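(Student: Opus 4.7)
The plan is to assemble the theorem as a direct consequence of the machinery built up in the preceding two sections, packaged by the commutative diagram of Corollary~\mref{cor:r}. First, I would invoke Theorem~\mref{NYB-ND}\meqref{it:3}: since $(A, \lhd, \rhd, \diamond)$ is a pre-\gda with associated \gda $(A, \circ, [\cdot,\cdot])$, the element
\[ r = \sum_{\alpha=1}^n (e_\alpha \otimes e_\alpha^\ast - e_\alpha^\ast \otimes e_\alpha) \]
is a skew-symmetric solution of the GDYBE in the semi-direct product \gda $\hat{A} = A\ltimes_{L_\rhd^\ast+R_\lhd^\ast, -R_\lhd^\ast, L_\diamond^\ast}A^\ast$. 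This supplies the single object to which the two parallel constructions of the theorem will be applied.

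Next I would run the ``bialgebra side'' of the diagram. By Proposition~\mref{cob-GD}, skew-symmetry of $r$ together with $\mathbf{N}(r)=\mathbf{C}(r)=0$ in $\hat{A}$ yields a \gdba structure $(\hat{A}, \circ, [\cdot,\cdot], \Delta_r, \delta_r)$ with $\Delta_r$ and $\delta_r$ defined by Eqs.~\meqref{coNov} and \meqref{coLie}. Theorem~\mref{thm1-corr} then produces the corresponding Lie conformal bialgebra on $\hat{R} = \bfk[\partial]\hat A$ with cobracket $\delta$ given by Eq.~\meqref{cobracket} applied to $(\Delta_r,\delta_r)$.

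In parallel, I would run the ``Yang–Baxter side''. By Proposition~\mref{conformal Yang-Baxter equation}, regarded as an element of $\hat{R}\otimes\hat{R}$, the same $r$ is a skew-symmetric solution of the CCYBE in the Lie conformal algebra $\hat{R}$. Applying Proposition~\mref{coboundary-conf} to this $r$ endows $\hat{R}$ with a Lie conformal bialgebra structure whose cobracket $\bar\delta$ is given by Eq.~\meqref{coboundary}.

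Finally, the two Lie conformal bialgebra structures are identified by Corollary~\mref{cor:r}, whose commutative diagram says exactly that $\delta = \bar\delta$ whenever both are built from the same skew-symmetric solution of the GDYBE. Since the heavy calculation linking Eqs.~\meqref{cobracket} and~\meqref{coboundary} is already contained in the proof of that corollary, there is no separate obstacle here; the only thing to check is that Theorem~\mref{NYB-ND}\meqref{it:3} indeed gives a solution of the \emph{full} GDYBE (both the Novikov and the Lie parts), so that Corollary~\mref{cor:r} is applicable—which it does, since the identity map on $A$ is simultaneously an $\calo$-operator for both the Novikov and the Lie structure via the representation $(A,L_\rhd,R_\lhd,L_\diamond)$. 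The mildly delicate bookkeeping, and thus the only real point of care, is to verify that the representation of $\hat A$ used in Theorem~\mref{NYB-ND}\meqref{it:3} matches the one implicit in Eqs.~\meqref{coNov}–\meqref{coLie} when these coproducts are read off from $r$; this is immediate from the definition of the semi-direct product $\hat A$ and the pairing identifying $\Hom_\bfk(A^\ast,A)$ with $A\otimes A$.
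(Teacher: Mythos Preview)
Your proposal is correct and follows essentially the same approach as the paper: the paper presents this theorem with the preamble ``Combining the results above, we have'' and no further proof, meaning it is exactly the concatenation of Theorem~\mref{NYB-ND}\meqref{it:3}, Propositions~\mref{cob-GD}, \mref{conformal Yang-Baxter equation}, \mref{coboundary-conf}, Theorem~\mref{thm1-corr}, and Corollary~\mref{cor:r} that you spell out. Your additional remarks about matching representations are harmless but unnecessary, since the semi-direct product in Theorem~\mref{NYB-ND}\meqref{it:3} is by construction the same $\hat A$ appearing in the statement.
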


We illustrate the utility of the previous constructions by
presenting an example of GD bialgebras and Lie conformal
bialgebras constructed from Zinbiel algebras with derivations.

\begin{ex}
Let $(A={\bf k }e_1\oplus {\bf k} e_2 \oplus {\bf k} e_3, \cdot)$ be the 3-dimensional Zinbiel algebra whose non-zero products are given by
\vspb
\begin{eqnarray*}
e_1\cdot e_1=e_2, \;\;e_1\cdot e_2=e_3,\;\; e_2\cdot e_1=e_3.
\end{eqnarray*}
Let $D: A\rightarrow A$ be the derivation given by
\vspb
\begin{eqnarray*}
D(e_1)=e_1,\;\;D(e_2)=2e_2,\;\;D(e_3)=3e_3.
\end{eqnarray*}
By Example \mref{constr-pre-GD}, there is a pre-\gda
$(A={\bf k }e_1\oplus {\bf k} e_2 \oplus {\bf k} e_3, \lhd, \rhd, \diamond)$   whose non-zero products are given by
\vspb
\begin{eqnarray*}
&&e_1\lhd e_1=(1+\xi)e_2,\;\;e_1 \lhd e_2=(2+\xi)e_3,\;\;e_2\lhd e_1=(1+\xi)e_3,\;\;e_1\rhd e_1=(1+\xi)e_2,\\
&&e_1\rhd e_2=(2+\xi)e_3,\;\; e_2\rhd e_1=(1+\xi)e_3,\;\;e_1\diamond e_2=ke_3,\;\;e_2\diamond e_1=-ke_3,
\end{eqnarray*}
for some $k$ and $\xi\in {\bf k}$. Then the associated \gda is $(A, \circ, [\cdot,\cdot])$ with the non-zero products  as follows.
\vspb
\begin{eqnarray*}
&&e_1\circ e_1=(2+2\xi)e_2,\;\;e_1 \circ e_2=(4+2\xi)e_3,\;\;e_2\circ e_1=(2+2\xi)e_3,\;\;[e_1, e_2]=2ke_3.
\end{eqnarray*}
 Let $\{e_1^\ast, e_2^\ast, e_3^\ast\}$ be the basis of $A^\ast$ dual to $\{e_1, e_2, e_3\}$. Then the non-zero products of the \gda $(\hat{A}=A\ltimes_{L_{\rhd}^\ast+R_{\lhd}^\ast,
-R_{\lhd}^\ast, L_\diamond^\ast}A^\ast, \bullet, [\cdot,\cdot])$ are given by
\begin{eqnarray*}
&&e_1\bullet e_1=(2+2\xi)e_2,\;\;e_1\bullet e_2=(4+2\xi)e_3,\;\;e_2\bullet e_1=(2+2\xi)e_3,\;\;e_1\bullet e_2^\ast=-(2+2\xi)e_1^\ast,\\
&&e_2^\ast\bullet e_1=(1+\xi)e_1^\ast,\;\;e_1\bullet e_3^\ast=-(3+2\xi)e_2^\ast,\;\;e_3^\ast\bullet e_1=(1+\xi)e_2^\ast,\;\;e_2\bullet e_3^\ast=-(3+2\xi)e_1^\ast,\\
&& e_3^\ast\bullet e_2=(2+\xi)e_1^\ast, \;\;[e_1, e_2]=2ke_3,\;\;[e_1,e_3^\ast]=-ke_2^\ast,\;\;[e_2,e_3^\ast]=ke_1^\ast.
\end{eqnarray*}
Set $r=\sum_{i=1}^3(e_i\otimes e_i^\ast-e_i^\ast\otimes e_i)$. By Theorem \mref{NYB-ND},  $r$ is a skew-symmetric solution of the GDYBE in $(\hat{A}=A\ltimes_{L_{\rhd}^\ast+R_{\lhd}^\ast,
-R_{\lhd}^\ast, L_\diamond^\ast}A^\ast, \bullet, [\cdot,\cdot])$. Then by Theorem \mref{Constr-Lie-Conf-pre-Nov}, there is a \gdba $(\hat{A}=A\ltimes_{L_{\rhd}^\ast+R_{\lhd}^\ast,
-R_{\lhd}^\ast, L_\diamond^\ast}A^\ast, \bullet, [\cdot,\cdot], \Delta, \delta_0)$ where $\Delta$ and $\delta_0$ are given by
\begin{eqnarray*}
&&\Delta(e_1)=-(1+\xi)e_2\otimes e_1^\ast-(2+\xi)e_3\otimes e_2^\ast+(2+2\xi)e_1^\ast\otimes e_2+(3+2\xi)e_2^\ast\otimes e_3,\\
&&\Delta(e_2)=-(1+\xi)e_3\otimes e_1^\ast+(3+2\xi)e_1^\ast\otimes e_3,\;\;\Delta(e_3)=\Delta(e_1^\ast)=0,\\
&&\Delta(e_2^\ast)=-(2+2\xi)e_1^\ast\otimes e_1^\ast,\;\;\Delta(e_3^\ast)=-(4+2\xi)e_1^\ast\otimes e_2^\ast-(2+2\xi)e_2^\ast\otimes e_1^\ast,\\
&&\delta_0(e_1)=2k(e_3\otimes e_2^\ast-e_2^\ast\otimes e_3)+k(e_2^\ast\otimes e_3-e_3\otimes e_2^\ast),\\
&&\delta_0(e_2)=k(e_3\otimes e_1^\ast-e_1^\ast\otimes e_3)+2k(e_1^\ast\otimes e_3-e_3\otimes e_1^\ast),\\
&&\delta_0(e_3)=\delta_0(e_1^\ast)=\delta_0(e_2^\ast)=0,\;\;\delta_0(e_3^\ast)=2k(e_2^\ast\otimes e_1^\ast-e_1^\ast\otimes e_2^\ast).
\end{eqnarray*}
By Theorem \mref{Constr-Lie-Conf-pre-Nov}, there is a Lie conformal bialgebra structure on ${\bf k}[\partial]\hat{A}$ with the non-zero $\lambda$-brackets and $\delta$ given by
\begin{eqnarray*}
[{e_1}_\lambda e_1]&=&(2+2\xi)(\partial+2\lambda)e_2,\;\;[{e_1}_\lambda e_2]=(2+2\xi)\partial e_3+(6+4\xi)\lambda e_3+2ke_3,\\
{} [{e_1}_\lambda e_2^\ast]&=& (1+\xi)(\partial-\lambda)e_1^\ast,\;\;[{e_1}_\lambda e_3^\ast]=(1+\xi)\partial e_2^\ast-(2+\xi)\lambda e_2^\ast-ke_2^\ast,\\
{}[{e_2}_\lambda e_3^\ast]&=&(2+\xi)\partial e_1^\ast-(1+\xi)\lambda e_1^\ast+ke_1^\ast,\\
\delta(e_3)&=&\delta(e_1^\ast)=0,\;\; \delta(e_2^\ast)=-(2+2\xi)(\partial e_1^\ast\otimes e_1^\ast-e_1^\ast\otimes \partial e_1^\ast),\\
\delta(e_1)&=&2k(e_3\otimes e_2^\ast-e_2^\ast\otimes e_3)+k(e_2^\ast\otimes e_3-e_3\otimes e_2^\ast)-(1+\xi)(\partial e_2\otimes e_1^\ast-e_1^\ast\otimes \partial e_2)\\
&&-(2+\xi)(\partial e_3\otimes e_2^\ast-e_2^\ast\otimes \partial e_3)+(2+2\xi)(\partial e_1^\ast\otimes e_2-e_2\otimes \partial e_1^\ast)\\
&&+(3+2\xi)(\partial e_2^\ast\otimes e_3-e_3\otimes \partial e_2^\ast),\\
\delta(e_2)&=&k(e_3\otimes e_1^\ast-e_1^\ast\otimes e_3)+2k(e_1^\ast\otimes e_3-e_3\otimes e_1^\ast)-(1+\xi)(\partial e_3\otimes e_1^\ast-e_1^\ast\otimes \partial e_3)\\
&& +(3+2\xi)(\partial e_1^\ast\otimes e_3-e_3\otimes \partial e_1^\ast),\\
\delta(e_3^\ast)&=&2k(e_2^\ast\otimes e_1^\ast-e_1^\ast\otimes e_2^\ast)-(4+2\xi)(\partial e_1^\ast\otimes e_2^\ast-e_2^\ast\otimes\partial e_1^\ast)-(2+2\xi)(\partial e_2^\ast\otimes e_1^\ast-e_1^\ast\otimes \partial e_2^\ast).
\vspb
\end{eqnarray*}
\end{ex}

\section{Characterizations of \gdbas and relations with Lie conformal bialgebras}
\mlabel{s:char}
In this section, we present two characterizations of \gdbas in terms of matched pairs and Manin triples of \gdas respectively.  Furthermore, the relations of the notions of matched pairs as well as Manin triples of \gdas with the corresponding notions of the Lie conformal algebras are established.

\subsection{Equivalent characterizations of \gdbas}

Next, we recall the definitions of matched pairs of Novikov algebras and Lie algebras.

\begin{defi}\mlabel{Matched pair} \mcite{Hong}
    Let $(A,\circ_A)$ and $(B,\circ_B)$ be Novikov algebras.  Let $l_A$, $r_A: A \rightarrow {\rm
        End}_{\bf k}(B)$ and $l_B$, $r_B: B \rightarrow {\rm
        End}_{\bf k}(A)$ be linear maps.
    If there is a Novikov algebra structure on the direct sum $A\oplus B$ of the underlying vector spaces of $A$ and $B$ given by
    {\small \begin{eqnarray}
            \mlabel{eq-Nov-match}&&(a+x)\bullet (b+y):=\big(a\circ_A b+l_B(x)b+r_B(y)a\big)+\big(x\circ_B y+l_A(a)y+r_A(b)x\big),\;a, b\in A, x, y\in B,
    \end{eqnarray}}
    then we call the resulting sextuple $(A,
    B, l_A, r_A, l_B, r_B)$ a {\bf matched pair of Novikov algebras}.
\end{defi}

\begin{defi}\mcite{M}
    Let $(A, [\cdot,\cdot]_A)$ and $(B, [\cdot,\cdot]_B)$ be Lie algebras, $\rho_A: A \rightarrow {\rm
        End}_{\bf k}(B)$ and $\rho_B: B \rightarrow {\rm
        End}_{\bf k}(A)$ be linear maps.
If there is a Lie algebra structure on the vector space $A\oplus B$ given by
    {\small \begin{eqnarray}
            \mlabel{eq-Lie-match}&&[a+x, b+y]:=([a,b]_A+\rho_{B}(x)b-\rho_{B}(y)a)+([x,y]_B+\rho_{A}(a)y-\rho_{A}(b)x),\;\; a, b\in A, x, y\in B,
    \end{eqnarray}}
    then we call the resulting quadruple $(A,
    B, \rho_{A}, \rho_{B})$ a {\bf matched pair of Lie algebras}.
\end{defi}

\begin{defi}
    Let $(A, \circ_A, [\cdot,\cdot]_A)$ and $(B, \circ_B, [\cdot,\cdot]_B)$ be \gdas. Let $l_A$, $r_A$, $\rho_A: A \rightarrow {\rm
        End}_{\bf k}(B)$ and $l_B$, $r_B$, $\rho_B: B \rightarrow {\rm
        End}_{\bf k}(A)$ be linear maps.
    If there is a \gda $(A\oplus B, \bullet, [\cdot,\cdot])$ where $\bullet$ and $[\cdot,\cdot]$ on $A\oplus B$ are given by Eqs. (\mref{eq-Nov-match}) and  (\mref{eq-Lie-match}) respectively, then we call the resulting octuple $(A, B, l_A, r_A, \rho_{A}, l_B, r_B, \rho_{B})$ a {\bf matched pair of \gdas}.
\end{defi}

\begin{pro}\cite[Example 3.6, Theorem 3.7]{WH}\mlabel{matched-pair-GD}
The octuple $(A, B, l_A, r_A, \rho_{A}, l_B, r_B, \rho_{B})$ is a  matched pair of \gdas if and only if $(A, B, l_A, r_A, l_B, r_B)$ is a matched pair of Novikov algebras, $(A, B, \rho_A, \rho_B)$ is a matched pair of Lie algebras, $(A, l_B, r_B, \rho_B)$ be a representation of $(B, \circ_B, [\cdot,\cdot]_B)$,  $(B, l_A, r_A, \rho_A)$ be a representation of $(A, \circ_A, [\cdot,\cdot]_A)$ and they also satisfy the compatibility conditions as follows.
    \begin{eqnarray}
        &&\mlabel{match1}[a, r_B(x)b]_A-\rho_B(l_A(b)x)a-\rho_B(x)(b\circ_A a)+r_B(x)[b,a]_A+(\rho_B(x)b)\circ_A a\\
        &&\;\;\;\;-l_B(\rho_A(b)x)a+b\circ_A (\rho_B(x)a)-r_B(\rho_A(a)x)b=0,\nonumber\\
        &&\mlabel{match2}[a, l_B(x)b]_A-\rho_B(r_A(b)x)a-[b, l_B(x)a]_A+\rho_B(r_A(a)x)b+(\rho_B(x)a)\circ_A b\\
        &&\;\;\;\;-l_B(\rho_A(a)x)b-(\rho_B(x)b)\circ_A a+l_B(\rho_A(b)x)a-l_B(x)[a,b]_A=0,\nonumber\\
        &&\mlabel{match3}[y,r_A(a)x]_B-\rho_A(a)(x\circ_B y)-\rho_A(l_B(x)a)y+(\rho_A(a)x)\circ_B y-l_A(\rho_B(x)a)y\\
        &&\;\;\;\;+r_A(a)[x,y]_B+x\circ_B (\rho_A(a)y)-r_A(\rho_B(y)a)x=0,\nonumber\\
        &&\mlabel{match4}[x, l_A(a)y]_B-\rho_A(r_B(y)a)x-[y, l_A(a)x]_B+\rho_A(r_B(x)a)y+(\rho_A(a)x)\circ_B y\\
        &&\;\;\;\;-l_A(\rho_B(x)a)y-(\rho_A(a)y)\circ_B x+l_A(\rho_B(y)a)x-l_A(a)[x,y]_B=0,\nonumber \;\;a, b\in A, x, y\in B.
    \end{eqnarray}
    Moreover, any \gda that can be decomposed into a linear direct sum of two GD subalgebras is obtained from a matched pair of \gdas.
\end{pro}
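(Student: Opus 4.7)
The plan is to verify the proposition by directly unpacking the three \gda axioms for the candidate structure $(A\oplus B, \bullet, [\cdot, \cdot])$, where $\bullet$ and $[\cdot, \cdot]$ are defined by Eqs.~(\mref{eq-Nov-match}) and (\mref{eq-Lie-match}) respectively. Since the two operations decouple in the first two axioms, the Novikov axioms for $\bullet$ on $A\oplus B$ are equivalent, by standard arguments, to $(A, B, l_A, r_A, l_B, r_B)$ being a matched pair of Novikov algebras, and the Jacobi identity for $[\cdot, \cdot]$ on $A\oplus B$ is equivalent to $(A, B, \rho_A, \rho_B)$ being a matched pair of Lie algebras.

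It then remains to analyze the \gda compatibility axiom
\begin{equation*}
[x\bullet y, z] - [x\bullet z, y] + [x,y]\bullet z - [x,z]\bullet y - x\bullet [y,z] = 0
\end{equation*}
by multilinearity over the eight cases according to whether each of $x, y, z$ lies in $A$ or $B$. The two pure cases $(x,y,z)\in A^3$ and $(x,y,z)\in B^3$ recover exactly the \gda compatibility condition for $A$ and for $B$ respectively. Among the six mixed cases, the configurations in which the singleton element from one subspace interacts through both the bracket and the product of the other subspace yield precisely the conditions (\mref{GF-rep-1}) and (\mref{GF-rep-2}) required for $(B, l_A, r_A, \rho_A)$ to be a representation of $(A, \circ_A, [\cdot, \cdot]_A)$ and symmetrically for $(A, l_B, r_B, \rho_B)$ to be a representation of $(B, \circ_B, [\cdot, \cdot]_B)$. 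The remaining mixed cases, in which two action maps of different kinds (left vs.\ right vs.\ bracket) act across both subspaces simultaneously, produce the four extra compatibility conditions (\mref{match1})--(\mref{match4}).

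The main obstacle will be the bookkeeping in this case analysis: each of the eight cases unpacks into several terms, and one must carefully match the resulting identities to the previously established axioms (Novikov matched pair conditions, Lie matched pair conditions, \gda representation axioms) or recognize them as the genuinely new equations (\mref{match1})--(\mref{match4}). The skew-symmetry of $[\cdot, \cdot]$ together with the symmetry of the compatibility axiom under relabeling of $(y,z)$ in the second position should be used to avoid duplicating cases. The final assertion --- that any \gda decomposing as the direct sum of two GD-subalgebras arises from a matched pair --- is then immediate: given such a direct sum decomposition $C = A\oplus B$ as \gdas, define the six action maps by composing the mixed products $a\bullet x$, $x\bullet a$, $[a,x]$ (for $a\in A, x\in B$) in $C$ with the canonical projections onto $A$ and $B$, and apply the equivalence just established in the reverse direction.
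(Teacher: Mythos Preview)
The paper does not give its own proof of this proposition: it is quoted from \cite[Example 3.6, Theorem 3.7]{WH} and stated without argument. Your plan is exactly the standard direct verification that such a citation stands in for, and it is correct. A couple of small bookkeeping remarks may help you carry it out cleanly.

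First, the GD compatibility identity $[p\bullet q,r]-[p\bullet r,q]+[p,q]\bullet r-[p,r]\bullet q-p\bullet[q,r]$ is not symmetric but \emph{antisymmetric} in $(q,r)$; this still lets you pair off the six mixed cases into four essentially distinct ones (say $AAB$, $ABB$, $BAA$, $BAB$), as you intend.

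Second, each mixed case yields an equation in $A\oplus B$, hence an $A$-component and a $B$-component. After the reduction above you therefore obtain $4\times 2=8$ scalar identities. Four of them are exactly (\mref{GF-rep-1}) and (\mref{GF-rep-2}) for the representation $(B,l_A,r_A,\rho_A)$ of $A$ and for the representation $(A,l_B,r_B,\rho_B)$ of $B$ (the Novikov- and Lie-representation parts of these GD representations are already subsumed in the Novikov and Lie matched-pair conditions from your first two steps). The other four are (\mref{match1})--(\mref{match4}). Concretely: the $B$-component of case $AAB$ and the $A$-component of case $BAA$ give (\mref{GF-rep-1}), (\mref{GF-rep-2}) for the two representations, while the opposite components, together with cases $ABB$ and $BAB$, produce (\mref{match1})--(\mref{match4}). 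Your handling of the ``Moreover'' clause by reading off the action maps from the projections is also correct.
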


\begin{defi}\mlabel{Novbilinear}\mcite{HBG}
Let $(A,\circ)$ be a Novikov algebra. A bilinear form
$(\cdot,\cdot)$  on $A$ is called {\bf invariant} if it satisfies
\vspc
\begin{eqnarray}\mlabel{bilinear1}
        (a\circ b,c)=-(b, a\star c),\;\;\;
        a,b,c\in A.
        \vspa
\end{eqnarray}
A {\bf quadratic Novikov algebra}, denoted by $(A,\circ, (\cdot,\cdot))$, is a Novikov algebra $(A,\circ)$ together with a nondegenerate symmetric invariant bilinear form    $(\cdot,\cdot)$.
\end{defi}

Recall that a bilinear form $(\cdot,\cdot)_L $ on a Lie
algebra $L$ is called {\bf invariant} if
\begin{eqnarray}
    ( [a, b], c)_L=( a, [b, c])_L, \;\;
    a, b, c\in L.
\end{eqnarray}

\begin{defi}
    Let $(A, \circ, [\cdot,\cdot])$ be a \gda. If there is a bilinear form $(\cdot,\cdot)$ such that $(A, \circ, (\cdot,\cdot))$ is a quadratic Novikov algebra and $(\cdot,\cdot)$ is invariant on the Lie algebra $(A, [\cdot,\cdot])$, then $(A, \circ, [\cdot,\cdot], (\cdot,\cdot))$ is called a {\bf quadratic \gda}.
\end{defi}

\begin{defi}
A {\bf(standard) Manin triple of \gdas} is a tripe of \gdas $(A=A_1\oplus A_1^\ast, (A_1, \circ_{A_1}, [\cdot,\cdot]_{A_1}), (A_1^\ast, \circ_{A_1^\ast}, [\cdot,\cdot]_{A_1^\ast}))$ for which
\begin{enumerate}
\item $A$ is the vector space direct sum of $A_1$ and $A_1^\ast$;
\item $(A_1, \circ_{A_1}, [\cdot,\cdot]_{A_1})$ and $(A_1^\ast, \circ_{A_1^\ast}, [\cdot,\cdot]_{A_1^\ast})$ are GD subalgebras of $A$;
\item the bilinear form on $A_1\oplus A_1^*$ given by
\begin{eqnarray}\mlabel{bilinear}
(a+f,b+g)=\langle f, b\rangle+\langle g, a\rangle,\;\;\;\;a, b\in A_1, f, g\in A_1^\ast,
\end{eqnarray}
        is invariant on both $(A, \circ)$ and $(A, [\cdot,\cdot])$, that is, $(A, \circ, $ $[\cdot,\cdot], (\cdot,\cdot))$ is a quadratic \gda.
    \end{enumerate}
\end{defi}

Next, we present an equivalence between Manin triples of \gdas and some matched pairs of \gdas.
\begin{thm}\mlabel{equi-1}
    Let $(A, \circ_A, [\cdot,\cdot]_A)$ be a \gda. Suppose that there is a \gda structure $(A^\ast, \circ_{A^\ast}, [\cdot,\cdot]_{A^\ast})$ on $A^\ast$. Then there is a Manin triple $(A\oplus A^\ast, (A, \circ_A, [\cdot,\cdot]_A), $ $(A^\ast, \circ_{A^\ast}, [\cdot,\cdot]_{A^\ast}))$ if and only if $(A, A^\ast, L_{\star_A}^\ast, $ $-R_{\circ_A}^\ast, \ad_A^\ast,  L_{\star_{A^\ast}}^\ast, -R_{\circ_{A^\ast}}^\ast, \ad_{A^\ast}^\ast)$ is a matched pair of \gdas.
\end{thm}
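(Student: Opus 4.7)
My plan is to reduce the statement to the already-known analogous equivalences for Novikov algebras and Lie algebras, and then glue them together using the characterization of matched pairs of \gdas in Proposition~\mref{matched-pair-GD}. The starting observation is that a Manin triple of \gdas is simultaneously a Manin triple of Novikov algebras (with respect to $\circ$ and the bilinear form \meqref{bilinear}) and a Manin triple of Lie algebras (with respect to $[\cdot,\cdot]$ and the same bilinear form), so the two layers can be analyzed separately and then recombined.

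First I would invoke the Novikov-algebra Manin triple theorem from~\mcite{HBG} (which is the direct analogue of the present statement at the level of $\circ$) to conclude that a Manin triple structure on the Novikov part of $A\oplus A^\ast$ is equivalent to the sextuple $(A,A^\ast,L_{\star_A}^\ast,-R_{\circ_A}^\ast,L_{\star_{A^\ast}}^\ast,-R_{\circ_{A^\ast}}^\ast)$ being a matched pair of Novikov algebras. Similarly, by the classical result of Drinfeld in the form recalled in~\mcite{CP,M}, a Manin triple structure on the Lie part is equivalent to $(A,A^\ast,\ad_A^\ast,\ad_{A^\ast}^\ast)$ being a matched pair of Lie algebras. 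In both directions the key mechanism is that invariance of the pairing \meqref{bilinear} forces each mixed product $x\bullet a$ and $[x,a]$ for $a\in A$, $x\in A^\ast$ to be expressible in terms of the dual actions $L_{\star_A}^\ast$, $-R_{\circ_A}^\ast$, $\ad_A^\ast$ (and symmetrically for $A^\ast$), which is precisely how the representation data in the statement is built.

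Next I would combine the two layers via Proposition~\mref{matched-pair-GD}: once one has matched pairs on the Novikov and the Lie parts and knows that each of $(A,L_{\star_{A^\ast}}^\ast,-R_{\circ_{A^\ast}}^\ast,\ad_{A^\ast}^\ast)$ and $(A^\ast,L_{\star_A}^\ast,-R_{\circ_A}^\ast,\ad_A^\ast)$ is a representation of the corresponding \gda (which I will verify by dualizing the adjoint representation and applying the dual-representation construction established earlier in Section~\mref{s:ybe}), only the four cross compatibility identities \meqref{match1}--\meqref{match4} remain to be checked. I plan to show that these identities are equivalent to the invariance of the pairing~\meqref{bilinear} with respect to the compatibility axiom of the \gda, i.e.\ to
\[
[a\bullet b,c]-[a\bullet c,b]+[a,b]\bullet c-[a,c]\bullet b-a\bullet[b,c]=0
\]
evaluated on one element of $A$ and two of $A^\ast$ (or vice versa), using that the pure-$A$ and pure-$A^\ast$ components already encode the Novikov and Lie matched-pair axioms. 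Concretely, I would pair each monomial of the above identity against a test element and use $(x\bullet a,b)=-(a,x\bullet b+b\bullet x)$ and $([x,a],b)=(x,[a,b])$ to rewrite each term on the dual side; collecting terms yields precisely~\meqref{match1}--\meqref{match4}.

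The main obstacle is the bookkeeping in this last step: the five-term \gda compatibility axiom, tested against elements drawn from $A$ and $A^\ast$ in each of the two mixed distributions, produces a long expression whose cancellation must be matched term-by-term against~\meqref{match1}--\meqref{match4}. I expect no conceptual difficulty, but careful sign tracking between $L_{\star}^\ast$ versus $-R_\circ^\ast$ and between $\ad^\ast$ acting on $A$ versus $A^\ast$ will be needed; this can be organized symmetrically by exploiting the $A\leftrightarrow A^\ast$ duality, so that verifying \meqref{match1} and \meqref{match2} automatically implies~\meqref{match3} and \meqref{match4} by swapping roles, cutting the work in half.
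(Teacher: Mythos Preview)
Your approach is correct and shares the paper's core idea of decomposing into the Novikov and Lie layers via \cite[Theorem~3.10]{HBG} and the classical Lie--Manin triple equivalence. The one genuine difference is in how you handle the Manin triple $\Rightarrow$ matched pair direction: you plan to verify directly that the specific octuple $(A,A^\ast,L_{\star_A}^\ast,-R_{\circ_A}^\ast,\ad_A^\ast,L_{\star_{A^\ast}}^\ast,-R_{\circ_{A^\ast}}^\ast,\ad_{A^\ast}^\ast)$ satisfies all the axioms of Proposition~\mref{matched-pair-GD}, in particular checking \meqref{match1}--\meqref{match4} by pairing the GD compatibility identity on mixed inputs against test elements. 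The paper instead uses the ``Moreover'' clause of Proposition~\mref{matched-pair-GD}---that any \gda splitting as a direct sum of two GD subalgebras arises from \emph{some} matched pair---so the existence of a matched pair is automatic, and invariance of \meqref{bilinear} is then used only to identify the representation maps as the specific dual actions listed. This makes the paper's argument shorter and avoids the bookkeeping you flag as the main obstacle; your route is more self-contained and makes the role of the cross identities \meqref{match1}--\meqref{match4} explicit, at the cost of the term-by-term computation you anticipate.
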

\begin{proof}
Suppose that $(A, A^\ast, L_{\star_A}^\ast, $ $-R_{\circ_A}^\ast, \ad_A^\ast,  L_{\star_{A^\ast}}^\ast, -R_{\circ_{A^\ast}}^\ast, \ad_{A^\ast}^\ast)$ is a matched pair of \gdas. Then by the definition of matched pair, there is a \gda $(A\oplus A^\ast, \bullet, $ $[\cdot,\cdot])$ such that $(A, \circ_{A},$ $ [\cdot,\cdot]_{A})$ and $(A^\ast, \circ_{A^\ast}, [\cdot,\cdot]_{A^\ast})$ are its GD subalgebras. Moreover,
$(A, A^\ast, L_{\star_A}^\ast, -R_{\circ_A}^\ast, L_{\star_{A^\ast}}^\ast,$ $ -R_{\circ_{A^\ast}}^\ast)$ is a matched pair of Novikov algebras and $(A, A^\ast, \ad_A^\ast, \ad_{A^\ast}^\ast)$ is a matched pair of Lie algebras. Then by \cite[Theorem 3.10]{HBG}, the bilinear form $(\cdot,\cdot)$ defined by Eq. (\mref{bilinear}) is invariant on $(A\oplus A^\ast, \bullet)$. Similarly, since $(A, A^\ast, \ad_A^\ast, \ad_{A^\ast}^\ast)$ is a matched pair of Lie algebras, the bilinear form $(\cdot,\cdot)$ defined by Eq. (\mref{bilinear}) is also invariant on $(A\oplus A^\ast, [\cdot,\cdot])$. Therefore, $(A\oplus A^\ast, (A, \circ_A, [\cdot,\cdot]_A), $ $(A^\ast, \circ_{A^\ast}, [\cdot,\cdot]_{A^\ast}))$ is a Manin triple of \gdas.

Conversely, suppose that $(A\oplus A^\ast, (A, \circ_A, [\cdot,\cdot]_A), $ $(A^\ast, \circ_{A^\ast}, [\cdot,\cdot]_{A^\ast}))$ is a Manin triple of \gdas. Then by Proposition \mref{matched-pair-GD}, the \gda $(A\oplus A^\ast, \circ, [\cdot,\cdot])$ is obtained from some matched pair $(A, A^\ast, l_A, r_A, \rho_A, l_{A^\ast}, r_{A^\ast}, \rho_{A^\ast})$. Since $(\cdot,\cdot)$ defined by Eq. (\mref{bilinear}) is invariant on $(A\oplus A^\ast, \circ)$, we have $l_A=L_{\star_A}^\ast$, $r_A=-R_{\circ_A}^\ast$, $l_{A^\ast}=L_{\star_{A^\ast}}^\ast$ and $r_{A^\ast}=-R_{\circ_{A^\ast}}^\ast$ by \cite[Theorem 3.10]{HBG}. Similarly, since $(\cdot,\cdot)$ defined by Eq. (\mref{bilinear}) is invariant on $(A\oplus A^\ast, [\cdot,\cdot])$, we obtain $\rho_A=\ad_A^\ast$ and $\rho_{A^\ast}=\ad_{A^\ast}^\ast$. Then the proof is completed.
\end{proof}

Finally, we give an equivalence between certain matched pairs of \gdas and \gdbas.
\begin{thm}\mlabel{equi-2}
    Let $(A, \circ_A, [\cdot,\cdot]_A)$ be a \gda. Suppose that there is a \gda structure $(A^\ast,\circ_{A^\ast}, [\cdot,\cdot]_{A^\ast})$ and let $(A,\Delta,\delta)$ be the corresponding dual GD coalgebra.
Then $(A, A^\ast, L_{\star_A}^\ast, $ $-R_{\circ_A}^\ast, \ad_A^\ast,  L_{\star_{A^\ast}}^\ast, -R_{\circ_{A^\ast}}^\ast, \ad_{A^\ast}^\ast)$ is a matched pair of \gdas if and only if $(A, \circ_A, [\cdot,\cdot]_A, \Delta, \delta)$ is a \gdba.
\end{thm}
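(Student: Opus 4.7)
The plan is to mirror the two-component structure of a \gdba---a Novikov bialgebra and a Lie bialgebra, glued by a cross-compatibility condition---against the two-component structure of the matched pair, which by Proposition~\mref{matched-pair-GD} decomposes into: (i) a matched pair of Novikov algebras $(A, A^\ast, L_{\star_A}^\ast, -R_{\circ_A}^\ast, L_{\star_{A^\ast}}^\ast, -R_{\circ_{A^\ast}}^\ast)$; (ii) a matched pair of Lie algebras $(A, A^\ast, \ad_A^\ast, \ad_{A^\ast}^\ast)$; (iii) the requirement that $(A^\ast, L_{\star_A}^\ast, -R_{\circ_A}^\ast, \ad_A^\ast)$ be a \gda-representation of $(A,\circ_A,[\cdot,\cdot]_A)$ and the symmetric statement for the $A^\ast$-action on $A$; and (iv) the four mixing equations \meqref{match1}--\meqref{match4}. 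Correspondingly, a \gdba consists of a Novikov bialgebra $(A,\circ_A,\Delta)$, a Lie bialgebra $(A,[\cdot,\cdot]_A,\delta)$, the GD coalgebra compatibility \meqref{Lc2}, and the cocycle \meqref{Lb4}.

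The first step is to handle the pure Novikov and pure Lie components. For the Novikov component, I will invoke the matched-pair characterization of Novikov bialgebras from \mcite{HBG}, which states that $(A,A^\ast,L_{\star_A}^\ast,-R_{\circ_A}^\ast,L_{\star_{A^\ast}}^\ast,-R_{\circ_{A^\ast}}^\ast)$ is a matched pair of Novikov algebras if and only if $(A,\circ_A,\Delta)$ is a Novikov bialgebra. For the Lie component, the classical Drinfeld theorem gives the equivalence between $(A,A^\ast,\ad_A^\ast,\ad_{A^\ast}^\ast)$ being a matched pair of Lie algebras and $(A,[\cdot,\cdot]_A,\delta)$ being a Lie bialgebra. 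These two reductions leave the genuine content of the theorem: showing that the residual conditions~(iii) and~(iv) are jointly equivalent to \meqref{Lc2} and \meqref{Lb4}.

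The core computation is then a dualization argument. Using $\langle \varphi^\ast(a)f,u\rangle = -\langle f,\varphi(a)u\rangle$, each residual equation, which lives in $A^\ast$ (or $A$) and is polynomial in the coadjoint operators, transposes to an identity in $A\otimes A$ expressed through $\Delta$, $\delta$, and the \gda operations on $A$. My plan is to pair off the dualized identities as follows: the mixed \gda-representation axioms \meqref{GF-rep-1}--\meqref{GF-rep-2} for $(A^\ast, L_{\star_A}^\ast, -R_{\circ_A}^\ast, \ad_A^\ast)$ will yield exactly \meqref{Lc2}, the symmetric conditions for the $A^\ast$-representation on $A$ yield the same equation evaluated on $A^\ast$ (automatic given the hypothesis that $(A,\Delta,\delta)$ is already the dual GD coalgebra of the \gda $A^\ast$), and the four cross equations \meqref{match1}--\meqref{match4} will assemble into the cocycle equation \meqref{Lb4} together with its $A^\ast$-counterpart.

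The main obstacle will be the sheer combinatorial bookkeeping: each of \meqref{match1}--\meqref{match4} carries eight or nine terms, and I will have to track which dualized terms correspond to which summands of \meqref{Lb4} under the six operators $L_{\circ_A}$, $R_{\circ_A}$, $L_{\star_A}$, $\ad_A$, $\Delta$, $\delta$. To manage this, my strategy is to mimic the block-by-block matching used in the proof of Theorem~\mref{thm-cob-GD}: isolate the terms in each identity by the underlying algebraic operation (Novikov vs.\ Lie bracket) and by their tensor position, and then verify the correspondence in each block. Since the Novikov and Lie parts have already been peeled off by Steps~(i)--(ii), only the genuinely mixed terms need to be matched, which substantially reduces the combinatorics. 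The argument closes by noting that both sides admit an $A\leftrightarrow A^\ast$ symmetry, so it suffices to verify one direction of each dualized identity.
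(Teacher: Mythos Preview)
Your overall plan matches the paper's: peel off the Novikov and Lie components via the known matched-pair characterizations, then handle the residual mixing conditions. Two points in your residual analysis need correction.

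First, a minor misidentification. Condition~(iii), that $(A^\ast, L_{\star_A}^\ast, -R_{\circ_A}^\ast, \ad_A^\ast)$ is a \gda-representation of $A$, does \emph{not} dualize to \meqref{Lc2}. That representation condition holds automatically for \emph{any} \gda $A$---it is precisely the statement that the coadjoint-type triple is always a representation---and has nothing to do with the coalgebra structure. Dualizing \meqref{GF-rep-1}--\meqref{GF-rep-2} for these specific operators just returns the \gda axioms of $A$, living in $A$, not an identity in $A^{\otimes 3}$. Meanwhile \meqref{Lc2} encodes that $A^\ast$ is a \gda, which is assumed by hypothesis. Both are automatic, but for independent reasons; there is no correspondence to verify here.

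Second, and more seriously, your count does not balance. You propose that \meqref{match1}--\meqref{match4} assemble into \meqref{Lb4} \emph{together with its $A^\ast$-counterpart}, and then appeal to $A\leftrightarrow A^\ast$ symmetry. But the \gdba definition contains only \meqref{Lb4}, not its dual version. For the implication \gdba $\Rightarrow$ matched pair you would therefore need the $A^\ast$-version of \meqref{Lb4} to follow from the $A$-version, and the symmetry you invoke is circular: self-duality of the \gdba notion is exactly what the theorem establishes. The paper resolves this differently: via a structure-constant computation it shows that, under the specific operator substitution, the four equations \meqref{match1}--\meqref{match4} are mutually \emph{redundant}---one checks directly that \meqref{match1} $\Leftrightarrow$ \meqref{match3}, and that \meqref{match1} alone implies \meqref{match2} and \meqref{match4}. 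Everything then collapses to the single equivalence \meqref{match1} $\Leftrightarrow$ \meqref{Lb4}, which is verified by an index relabeling. Your plan is missing this redundancy step, and without it the argument does not close.
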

\begin{proof}
    Note that $(A, A^\ast, \ad_A^\ast, \ad_{A^\ast}^\ast)$ is a matched pair of Lie algebras if and only if $(A, [\cdot,\cdot]_A, \delta)$ is a Lie bialgebra.
    Moreover, $(A, A^\ast, L_{\star_A}^\ast, -R_{\circ_A}^\ast, L_{\star_{A^\ast}}^\ast, -R_{\circ_{A^\ast}}^\ast)$ is a matched pair of Novikov algebras if and only if $(A, \circ_A, \Delta)$ is a Novikov bialgebra by \cite[Theorem 3.11]{HBG}. Therefore we only need to prove that Eqs. (\mref{match1})-(\mref{match4}) hold if and only if Eq. (\mref{Lb4}) holds when $l_A=L_{\star_A}^\ast$, $r_A=-R_{\circ_A}^\ast$, $l_B=L_{\star_{A^\ast}}^\ast$, $r_B=-R_{\circ_{A^\ast}}^\ast$, $\rho_A=\ad_A^\ast$ and $\rho_B=\ad_{A^\ast}^\ast$.

    Let $\{e_1,\cdots, e_n\}$ be a basis of $A$ and $\{e_1^\ast, \cdots, e_n^\ast\}$ be its dual basis. Let
\vspb
    \begin{eqnarray*}
        e_i\circ_A e_j=\sum_{k=1}^na_{ij}^ke_k,\;\;[e_i, e_j]_A=\sum_{k=1}^nb_{ij}^ke_k,\;\;\Delta(e_k)=\sum_{i,j=1}^nc_{ij}^ke_i\otimes e_j,\;\;\delta(e_k)=\sum_{i,j=1}^nd_{ij}^ke_i\otimes e_j.
\vspb
    \end{eqnarray*}
Consequently, $e_i^\ast \circ_{A^\ast} e_j^\ast=\sum_{k=1}^n c_{ij}^ke_k^\ast$ and $[e_i^\ast, e_j^\ast]_{A^\ast}=\sum_{k=1}^n d_{ij}^ke_k^\ast$. Moreover, we obtain
\vspb
    \begin{eqnarray*}
        &&L_{\star_A}^\ast(e_i)e_j^\ast=-\sum_{k=1}^n(a_{ik}^j+a_{ki}^j)e_k^\ast,\;\; -R_{\circ_A}^\ast(e_i)e_j^\ast=\sum_{k=1}^na_{ki}^je_k^\ast,\;\;L_{\star_{A^\ast}}^\ast(e_i^\ast)e_j=-\sum_{k=1}^n(c_{ik}^j+c_{ki}^j)e_k,\\
        && -R_{\circ_{A^\ast}}^\ast(e_i^\ast)e_j=\sum_{k=1}^nc_{ki}^je_k,\;\; \ad_A^\ast(e_i)e_j^\ast=-\sum_{k=1}^nb_{ik}^je_k^\ast,\;\;  \ad_{A^\ast}^\ast(e_i^\ast)e_j=-\sum_{k=1}^nd_{ik}^je_k.
\vspb
    \end{eqnarray*}

    Let $a=e_i$, $x=e_k^\ast$ and $b= e_j$ in Eqs. (\mref{match1}) and (\mref{match2}). Comparing the coefficients of $e_t$ gives
\vspb
\begin{eqnarray}
&&\mlabel{eq:match1}\;\;\sum_{s=1}^n\Big(c_{sk}^jb_{is}^t-(a_{js}^k+a_{sj}^k)d_{st}^i+a_{ji}^sd_{kt}^s+b_{ji}^sc_{tk}^s-d_{ks}^ja_{si}^t-b_{js}^k(c_{st}^i+c_{ts}^i)-d_{ks}^ia_{js}^t+b_{is}^kc_{ts}^j\Big)=0,\\
\vspb
&&\mlabel{eq:match2}\;\;\sum_{s=1}^n\Big(-(c_{ks}^j+c_{sk}^j)b_{is}^t+a_{sj}^kd_{st}^i+(c_{ks}^i+c_{sk}^i)b_{js}^t-a_{si}^kd_{st}^j-d_{ks}^ia_{sj}^t-b_{is}^k(c_{st}^j+c_{ts}^j)\\
&&\qquad\qquad+d_{ks}^ja_{si}^t+b_{js}^k(c_{st}^i+c_{ts}^i)+b_{ij}^s(c_{kt}^s+c_{tk}^s)\Big)=0.\nonumber
\end{eqnarray}
Let $a=e_i$, $x=e_j^\ast$ and $y=e_k^\ast$ in Eqs. (\mref{match3}) and (\mref{match4}). Comparing the coefficients of $e_t^\ast$, we obtain
\vspb
\begin{eqnarray}
&&\mlabel{eq:match3}\sum_{s=1}^n\Big(a_{si}^jd_{ks}^t+c_{jk}^sb_{it}^s-(c_{js}^i+c_{sj}^i)b_{st}^k-b_{is}^jc_{sk}^t-d_{js}^i(a_{st}^k+a_{ts}^k)+d_{jk}^sa_{ti}^s-b_{is}^kc_{js}^t+d_{ks}^ia_{ts}^j\Big)=0,\\
\vspb
&&\mlabel{eq:match4}\sum_{s=1}^n\Big(-(a_{is}^k+a_{si}^k)d_{js}^t+c_{sk}^ib_{st}^j+(a_{is}^j+a_{si}^j)d_{ks}^t-c_{sj}^ib_{st}^k-b_{is}^jc_{sk}^t-d_{js}^i(a_{st}^k+a_{ts}^k)\\
&&\qquad\qquad\qquad \qquad  +b_{is}^kc_{sj}^t+d_{ks}^i(a_{st}^j+a_{ts}^j)+d_{jk}^s(a_{it}^s+a_{ti}^s)\Big)=0.\nonumber
\end{eqnarray}

Let $a=e_i$ and $b=e_j$ in Eq. (\mref{Lb4}). Comparing the coefficients of $e_s\otimes e_t$, we get
\vspb
\begin{eqnarray}
&&\mlabel{eq:match6}\sum_{k=1}^n\Big(a_{ji}^kd_{st}^k+b_{ij}^kc_{st}^k-d_{kt}^ja_{ki}^s-d_{kt}^ia_{jk}^s-d_{sk}^i(a_{jk}^t+a_{kj}^t)-c_{kt}^jb_{ik}^s-c_{sk}^jb_{ik}^t+b_{jk}^t(c_{sk}^i+c_{ks}^i)\Big)=0.
\end{eqnarray}
It is straightforward to show that Eq. (\mref{eq:match3}) holds if and only if Eq. (\mref{eq:match1}) holds. By Eq. (\mref{eq:match1}), we get
\vspb
    \begin{eqnarray*}
        \sum_{s=1}^n(a_{js}^k+a_{sj}^k)d_{st}^i=\sum(c_{sk}^jb_{is}^t+a_{ji}^sd_{kt}^s+b_{ji}^sc_{tk}^s-d_{ks}^ja_{si}^t-b_{js}^k(c_{st}^i+c_{ts}^i)-d_{ks}^ia_{js}^t+b_{is}^kc_{ts}^j).
\vspb
    \end{eqnarray*}
    Substituting it into the expression on the left hand side of Eq. (\mref{eq:match4}), we find that Eq. (\mref{eq:match4}) holds. Similarly, we also get that Eq. (\mref{eq:match2}) holds by Eq. (\mref{eq:match1}). Therefore, Eqs. (\mref{eq:match1})-(\mref{eq:match4}) hold if and only if  Eq. (\mref{eq:match1}) holds.
    Replacing $k$, $s$ and $t$ in Eq. (\mref{eq:match6}) by $s$, $t$ and $k$ respectively, we get that Eq. (\mref{eq:match1}) holds if and only if Eq. (\mref{eq:match6}) holds. Then the proof is completed.
\end{proof}

Theorems \mref{equi-1} and \mref{equi-2} give the following characterization of \gdbas.
\begin{cor}\mlabel{GD-equi}
Let $(A, \circ_A, [\cdot,\cdot]_A)$ be a \gda. Suppose that there is a \gda structure $(A^\ast, \circ_{A^\ast},[\cdot,\cdot]_{A^\ast})$ on $A^\ast$ obtained from $\Delta:A\rightarrow A\otimes A$ and $\delta: A\rightarrow A\otimes A$ respectively. Then the following conditions are equivalent.
\begin{enumerate}
\item $(A, \circ_A, [\cdot,\cdot]_A, \Delta, \delta)$ is a \gdba;
\item There is a Manin triple $(A\oplus A^\ast, (A, \circ_A, [\cdot,\cdot]_A), (A^\ast, \circ_{A^\ast}, [\cdot,\cdot]_{A^\ast}))$;
\item $(A, A^\ast, L_{\star_A}^\ast, $ $-R_{\circ_A}^\ast, \ad_A^\ast,  L_{\star_{A^\ast}}^\ast, -R_{\circ_{A^\ast}}^\ast, \ad_{A^\ast}^\ast)$ is a matched pair of \gdas.
\end{enumerate}
\end{cor}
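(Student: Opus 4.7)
The plan is to observe that this corollary is essentially a bookkeeping consequence of the two preceding theorems, and the proof is a straightforward chain of equivalences rather than new work.

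First I would note that condition (2) and condition (3) are equivalent by Theorem~\mref{equi-1}: given that $(A^\ast,\circ_{A^\ast},[\cdot,\cdot]_{A^\ast})$ is the \gda structure on $A^\ast$ dual to the coalgebra maps $\Delta$ and $\delta$, Theorem~\mref{equi-1} asserts exactly that the Manin triple structure on $A\oplus A^\ast$ with the canonical bilinear form given by Eq.~\meqref{bilinear} exists if and only if the octuple in condition (3) is a matched pair of \gdas. No further verification is needed here; one only needs to point out that the dualization of $\Delta$ and $\delta$ yielding $(A^\ast,\circ_{A^\ast},[\cdot,\cdot]_{A^\ast})$ is what places us in the setting of Theorem~\mref{equi-1}.

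Next, I would invoke Theorem~\mref{equi-2} to identify conditions (1) and (3): with the same dual \gda structure on $A^\ast$ coming from $\Delta$ and $\delta$, Theorem~\mref{equi-2} says the octuple $(A, A^\ast, L_{\star_A}^\ast, -R_{\circ_A}^\ast, \ad_A^\ast, L_{\star_{A^\ast}}^\ast, -R_{\circ_{A^\ast}}^\ast, \ad_{A^\ast}^\ast)$ is a matched pair of \gdas if and only if $(A, \circ_A, [\cdot,\cdot]_A, \Delta, \delta)$ is a \gdba. Combining these two bi-implications completes the circle (1) $\Leftrightarrow$ (3) $\Leftrightarrow$ (2).

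There is no real obstacle to overcome; the corollary is a direct synthesis of Theorems~\mref{equi-1} and~\mref{equi-2}. The only point worth emphasizing in the write-up is that the \emph{same} matched-pair condition appears in both theorems, so the equivalence of (1) and (2) follows transitively through (3) without any additional compatibility check. One could alternatively phrase the proof as a single two-line argument, but making the role of each theorem explicit makes the logical structure of the characterization transparent to the reader.
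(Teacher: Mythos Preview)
Your proposal is correct and matches the paper's approach exactly: the corollary is stated immediately after Theorems~\mref{equi-1} and~\mref{equi-2} with no separate proof, as it is simply the combination of those two equivalences through condition~(3). Your explicit spelling-out of the chain $(1)\Leftrightarrow(3)$ via Theorem~\mref{equi-2} and $(2)\Leftrightarrow(3)$ via Theorem~\mref{equi-1} is precisely what the paper leaves implicit.
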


\subsection{Correspondences between \gdbas and Lie conformal bialgebras in terms of matched pairs and Manin triples}

Now recall matched pairs of Lie conformal algebras.
\begin{defi} \mcite{HL}\mlabel{conf-matched-pair}
Let $R_1$ and $R_2$ be Lie conformal algebras, $\rho: R_1\rightarrow \text{gc}(R_2)$ and $\sigma: R_2\rightarrow \text{gc}(R_1)$ be ${\bf k}[\partial]$-module homomorphisms.
If there is a Lie conformal algebra structure on the direct sum $R_1\oplus R_2$ as ${\bf k}[\partial]$-modules given by
\begin{eqnarray}\mlabel{107}
&&[(a+x)_\lambda (b+y)]=([a_\lambda b]+\sigma(x)_\lambda b-\sigma(y)_{-\lambda-\partial}a)+([x_\lambda y]+\rho(a)_\lambda y-\rho(b)_{-\lambda-\partial} x),\;\;
\end{eqnarray}
for all $a, b\in R_1$, and $x, y\in R_2$, then $(R_1, R_2, \rho, \sigma)$ is called {\bf a matched pair of Lie conformal algebras}.
\end{defi}

\begin{rmk}
    Let $(R_1,R_2,\rho,\sigma)$ be a matched pair of Lie conformal algebras. If $R_2$ is abelian and $\sigma$ is trivial, then the Lie conformal algebra on the ${\bf k}[\partial]$-module $R_1\oplus R_2$ given by Eq. (\mref{107}) is called the {\bf semi-direct product of $R_1$ and its representation $(R_2, \rho)$}. We denote it by $R_1\ltimes_\rho R_2$.
\end{rmk}

Next, a correspondence between matched pairs of \gdas and certain matched pairs of the corresponding Lie conformal algebras is given as follows.

\begin{pro}\mlabel{corr-matched pair}
Let ${\bf k}[\partial]A$ and ${\bf k}[\partial]B$ be the Lie conformal algebras corresponding to \gdas $(A, \circ_A, [\cdot,\cdot]_A)$ and $(B, \circ_B, [\cdot,\cdot]_B)$ respectively. Let $l_A$, $r_A$, $\rho_A: A\rightarrow \text{End}_{{\bf k}}(B)$ and $l_B$, $r_B$, $\rho_B: B\rightarrow \text{End}_{{\bf k}}(A)$ be linear maps. Then $({\bf k}[\partial]A, {\bf k}[\partial]B, \rho_{l_A,r_A, \rho_A}, \rho_{l_B, r_B, \rho_B})$ is a matched pair of Lie conformal algebras if and only if $(A, B, l_A, r_A, \rho_A, l_B, r_B, \rho_B)$ is a matched pair of \gdas.
\end{pro}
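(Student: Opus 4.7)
The plan is to show that the data of either matched pair assembles into the same Lie conformal algebra structure on the direct sum ${\bf k}[\partial](A\oplus B)$, so the two conditions are equivalent via Proposition \mref{Correspond-Algebra}. By Definition \mref{conf-matched-pair}, the quadruple $({\bf k}[\partial]A, {\bf k}[\partial]B, \rho_{l_A,r_A,\rho_A}, \rho_{l_B,r_B,\rho_B})$ is a matched pair of Lie conformal algebras if and only if the $\lambda$-bracket on ${\bf k}[\partial]A\oplus{\bf k}[\partial]B\cong {\bf k}[\partial](A\oplus B)$ defined by Eq.~(\mref{107}) (with $\rho=\rho_{l_A,r_A,\rho_A}$ and $\sigma=\rho_{l_B,r_B,\rho_B}$) defines a Lie conformal algebra. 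On the other hand, by definition of a matched pair of \gdas, $(A,B,l_A,r_A,\rho_A,l_B,r_B,\rho_B)$ is a matched pair of \gdas if and only if the operations $\bullet$ and $[\cdot,\cdot]$ on $A\oplus B$ given by Eqs.~(\mref{eq-Nov-match}) and (\mref{eq-Lie-match}) form a \gda structure.

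The key step is to verify that these two Lie conformal algebra structures on ${\bf k}[\partial](A\oplus B)$ coincide. For $a,b\in A$ and $x,y\in B$, one expands Eq.~(\mref{107}) using Eq.~(\mref{Conformal module}); for instance
\[
\rho_{l_B,r_B,\rho_B}(x)_\lambda b=\partial(r_B(x)b)+\lambda(l_B(x)b+r_B(x)b)+\rho_B(x)b,
\]
while substituting $\mu=-\lambda-\partial$ and using the sesquilinearity convention yields
\[
\rho_{l_B,r_B,\rho_B}(y)_{-\lambda-\partial}a=-\partial(l_B(y)a)-\lambda(l_B(y)a+r_B(y)a)+\rho_B(y)a,
\]
and similarly for the $\rho_{l_A,r_A,\rho_A}$ terms. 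Collecting the resulting coefficients of $\partial$, $\lambda$, and the constant term, the $\partial$-part assembles into $\partial\bigl((b+y)\bullet(a+x)\bigr)$, the $\lambda$-part into $\lambda\bigl((a+x)\star(b+y)\bigr)$ where $\star$ denotes the sum of $\bullet$ and its opposite, and the remaining terms collapse to $[a+x,b+y]$ from Eq.~(\mref{eq-Lie-match}). Thus
\[
[(a+x)_\lambda(b+y)]=\partial\bigl((b+y)\bullet(a+x)\bigr)+\lambda\bigl((a+x)\star(b+y)\bigr)+[a+x,b+y],
\]
which is exactly the $\lambda$-bracket assigned by Proposition \mref{Correspond-Algebra} to the triple $(A\oplus B,\bullet,[\cdot,\cdot])$.

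With this identification, applying Proposition \mref{Correspond-Algebra} to $A\oplus B$ gives the equivalence: the $\lambda$-bracket above makes ${\bf k}[\partial](A\oplus B)$ into a Lie conformal algebra if and only if $(A\oplus B,\bullet,[\cdot,\cdot])$ is a \gda, i.e.\ if and only if $(A,B,l_A,r_A,\rho_A,l_B,r_B,\rho_B)$ is a matched pair of \gdas. The main (minor) obstacle is simply the careful bookkeeping in the $\rho(y)_{-\lambda-\partial}a$ substitution, where one must distinguish the $\partial$ arising from evaluating the variable from the $\partial$ acting on elements of the module; once this is handled, the proof reduces to matching the three types of coefficients as sketched.
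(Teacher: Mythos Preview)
Your argument is correct and is precisely the approach the paper intends: the paper merely says the proof follows by the same argument as Proposition~\mref{corr-resp}, and that argument is exactly your computation showing that the $\lambda$-bracket from Eq.~(\mref{107}) on ${\bf k}[\partial](A\oplus B)$ coincides with the one attached by Proposition~\mref{Correspond-Algebra} to $(A\oplus B,\bullet,[\cdot,\cdot])$, after which the equivalence is immediate.
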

\begin{proof} The proof follows from a similar argument as the one for Proposition \ref{corr-resp}.
\end{proof}

\begin{cor}\mlabel{equi-matched pair}
    Let $R={\bf k}[\partial]A$ and $R^{\ast c}={\bf k}[\partial]A^\ast$ be the Lie conformal algebras corresponding to \gdas $(A, \circ_A, [\cdot,\cdot]_A)$ and $(A^\ast, \circ_{A^\ast}, [\cdot,\cdot]_{A^\ast})$ respectively. Then $(R, R^{\ast c}, \ad_R^\ast, \ad_{R^{\ast c}}^\ast)$ is a matched pair of Lie conformal algebras if and only if $(A, A^\ast, L_{\star_A}^\ast, -R_{\circ_A}^\ast, \ad_A^\ast, L_{ \star_{A^\ast}}^\ast, -R_{\circ_{A^\ast}}^\ast, \ad_{A^\ast}^\ast)$ is a matched pair of \gdas.
\end{cor}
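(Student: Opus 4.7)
The plan is to derive the corollary as an immediate consequence of Proposition \mref{corr-matched pair} by first identifying the two conformal dual adjoint representations $\ad_R^\ast$ and $\ad_{R^{\ast c}}^\ast$ with explicit representations of the form $\rho_{l,r,\rho}$, after which the equivalence becomes a direct substitution of data.

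For the first identification, recall from the remark following Proposition \mref{corr-resp} that the adjoint representation of the Lie conformal algebra $R = {\bf k}[\partial]A$ coincides with $(R, \rho_{L_{\circ_A}, R_{\circ_A}, \ad_A})$. Applying Proposition \mref{dual resp} to the adjoint representation $(A, L_{\circ_A}, R_{\circ_A}, \ad_A)$ of the \gda $(A, \circ_A, [\cdot,\cdot]_A)$ gives
$$
\ad_R^\ast \;=\; \rho_{L_{\circ_A}, R_{\circ_A}, \ad_A}^\ast \;=\; \rho_{L_{\star_A}^\ast,\, -R_{\circ_A}^\ast,\, \ad_A^\ast},
$$
viewed as a conformal representation of $R$ on $R^{\ast c} = {\bf k}[\partial]A^\ast$. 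By the symmetric argument applied to the dual \gda $(A^\ast, \circ_{A^\ast}, [\cdot,\cdot]_{A^\ast})$,
$$
\ad_{R^{\ast c}}^\ast \;=\; \rho_{L_{\star_{A^\ast}}^\ast,\, -R_{\circ_{A^\ast}}^\ast,\, \ad_{A^\ast}^\ast},
$$
a conformal representation of $R^{\ast c}$ on $R$.

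With these identifications in hand, the second step is to invoke Proposition \mref{corr-matched pair} with the substitutions $B = A^\ast$, $l_A = L_{\star_A}^\ast$, $r_A = -R_{\circ_A}^\ast$, $\rho_A = \ad_A^\ast$, $l_B = L_{\star_{A^\ast}}^\ast$, $r_B = -R_{\circ_{A^\ast}}^\ast$, $\rho_B = \ad_{A^\ast}^\ast$. The proposition then yields exactly the claimed equivalence between $(R, R^{\ast c}, \ad_R^\ast, \ad_{R^{\ast c}}^\ast)$ being a matched pair of Lie conformal algebras and $(A, A^\ast, L_{\star_A}^\ast, -R_{\circ_A}^\ast, \ad_A^\ast, L_{\star_{A^\ast}}^\ast, -R_{\circ_{A^\ast}}^\ast, \ad_{A^\ast}^\ast)$ being a matched pair of \gdas.

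There is no genuine obstacle here; the corollary is essentially bookkeeping that combines Propositions \mref{corr-resp}, \mref{dual resp} and \mref{corr-matched pair}. The only point requiring a line of care is the canonical identification $R^{\ast c} \cong {\bf k}[\partial]A^\ast$ as ${\bf k}[\partial]$-modules, which is valid because $A$ is finite-dimensional, ensuring that Proposition \mref{dual resp} genuinely delivers $\ad_R^\ast$ on the free ${\bf k}[\partial]$-module $R^{\ast c}$ rather than on an abstract conformal dual.
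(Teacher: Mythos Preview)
Your proposal is correct and follows essentially the same route as the paper: identify $\ad_R^\ast$ and $\ad_{R^{\ast c}}^\ast$ via Proposition~\mref{dual resp} (using that $\ad_R=\rho_{L_{\circ_A}, R_{\circ_A}, \ad_A}$), then apply Proposition~\mref{corr-matched pair}. Your added remarks on the explicit substitutions and the identification $R^{\ast c}\cong {\bf k}[\partial]A^\ast$ are helpful but not required beyond what the paper sketches.
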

\begin{proof}
    Note that $\ad_R=\rho_{L_{\circ_A}, R_{\circ_A}, \ad_A}$. Therefore, by Proposition \mref{dual resp}, $\ad_R^\ast=\rho_{ L_{\star_A}^\ast, -R_{\circ_A}^\ast, \ad_A^\ast}$. Similarly, $\ad_{R^{\ast c}}^\ast=\rho_{ L_{\star_{A^\ast}}^\ast, -R_{\circ_{A^\ast}}^\ast, \ad_A^\ast}$.
    Then the conclusion follows from Proposition \mref{corr-matched pair}.
\end{proof}

Let $V$ be a ${\bf k}[\partial]$-module. Recall \mcite{L} that a {\bf conformal bilinear form} on $V$ is a ${\bf k}$-bilinear map $\langle \cdot, \cdot \rangle_\lambda: V\times V\rightarrow {\bf k}[\lambda]$ such that
\vspb
\begin{eqnarray}
    \langle \partial u, v \rangle_\lambda=-\lambda \langle u, v \rangle_\lambda=-\langle u, \partial v \rangle_\lambda,\;\;\; u, v\in V.
\end{eqnarray}
A conformal bilinear form is called {\bf symmetric} if $\langle u, v \rangle_\lambda=\langle v, u \rangle_{-\lambda}$ for all $u$, $v\in V$.
A conformal bilinear form on a Lie conformal algebra $R$ is called {\bf invariant} if
\vspb
\begin{eqnarray}
    \langle [a_\mu b], c \rangle_\lambda=\langle a, [b_{\lambda-\partial} c] \rangle_\mu,\;\;\;\;a, b, c\in R.
\end{eqnarray}

Given a conformal bilinear form on a ${\bf k}[\partial]$-module $V$. Then we have a ${\bf k}[\partial]$-module homomorphism $L: V\rightarrow V^{\ast c}$, $v\rightarrow L_v$ given by
\begin{eqnarray*}
    (L_v)_\lambda u=\langle v, u\rangle_\lambda, \;\;\;u\in V.
\end{eqnarray*}
If $L$ gives an isomorphism between $V$ and $V^{\ast c}$, then we say that this conformal bilinear form is {\bf nondegenerate}.

A correspondence between quadratic \gdas and Lie conformal algebras with a symmetric invariant nondegenerate conformal bilinear form is given as follows.
\begin{pro}\mlabel{corresp-bilinear}
    Let $R={\bf k}[\partial]A$ be the Lie conformal algebra corresponding to a \gda $(A, \circ, [\cdot,\cdot])$. Suppose that there is a bilinear form $(\cdot, \cdot): A\times A\rightarrow {\bf k}$.
    Define a conformal bilinear form $\langle \cdot, \cdot\rangle_\lambda$ on $R$ by
\vspc
    \begin{eqnarray}
        \langle a, b\rangle_\lambda=(a,b),\;\;\; a, b \in A.
    \end{eqnarray}
    Then $\langle \cdot, \cdot \rangle_\lambda$ is a symmetric invariant nondegenerate conformal bilinear form if and only if $(A, \circ, [\cdot,\cdot], (\cdot, \cdot))$ is a quadratic \gda.
\end{pro}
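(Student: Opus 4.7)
The plan is to translate each of the three properties---symmetry, nondegeneracy, and invariance---of the conformal bilinear form $\langle\cdot,\cdot\rangle_\lambda$ on $R=\bfk[\partial]A$ into the corresponding property of the bilinear form $(\cdot,\cdot)$ on $A$. By conformal sesquilinearity, $\langle\cdot,\cdot\rangle_\lambda$ is uniquely determined by its restriction to $A\times A$, with $\langle\partial^m a,\partial^n b\rangle_\lambda=(-\lambda)^m\lambda^n(a,b)$ for $a,b\in A$. Symmetry of $\langle\cdot,\cdot\rangle_\lambda$, i.e.\ $\langle u,v\rangle_\lambda=\langle v,u\rangle_{-\lambda}$, therefore reduces at once to symmetry of $(\cdot,\cdot)$ on $A$. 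Since $R$ is free over $\bfk[\partial]$ on a basis of $A$, one has $R^{\ast c}\cong\bfk[\partial]A^\ast$ as $\bfk[\partial]$-modules, and the induced map $L:R\to R^{\ast c}$ is a $\bfk[\partial]$-module isomorphism if and only if its restriction $A\to A^\ast$ is bijective, which gives the equivalence of nondegeneracies.

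The bulk of the proof consists in unwinding the invariance identity $\langle[a_\mu b],c\rangle_\lambda=\langle a,[b_{\lambda-\partial}c]\rangle_\mu$ for $a,b,c\in A$, using the formula $[a_\mu b]=\partial(b\circ a)+\mu(a\star b)+[a,b]$ from Proposition~\mref{Correspond-Algebra}. The left-hand side simplifies to $-\lambda(b\circ a,c)+\mu(a\star b,c)+([a,b],c)$. For the right-hand side, I first substitute $\nu=\lambda-\partial$ into $[b_\nu c]=\partial(c\circ b)+\nu(b\star c)+[b,c]$; using the identity $c\circ b-b\star c=-b\circ c$, this collapses to $[b_{\lambda-\partial}c]=-\partial(b\circ c)+\lambda(b\star c)+[b,c]$. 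Pairing with $a$ and invoking $\langle a,\partial x\rangle_\mu=\mu\langle a,x\rangle_\mu$ then yields $-\mu(a,b\circ c)+\lambda(a,b\star c)+(a,[b,c])$.

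Equating the two polynomials in $\lambda,\mu$ produces three scalar identities on $A$: the constant term gives Lie invariance $([a,b],c)=(a,[b,c])$; the coefficient of $\lambda$ gives $-(b\circ a,c)=(a,b\star c)$, which is precisely the Novikov invariance~\meqref{bilinear1} after relabeling and applying symmetry of $(\cdot,\cdot)$; and the coefficient of $\mu$ gives $(a\star b,c)=-(a,b\circ c)$, which I will verify to be a consequence of the previous two identities together with symmetry of $(\cdot,\cdot)$. Conversely, starting from a quadratic \gda, all three coefficient identities hold, so $\langle\cdot,\cdot\rangle_\lambda$ is invariant.

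The main technical subtlety is the correct handling of the symbol $b_{\lambda-\partial}$ on the right-hand side of the invariance identity: one must first treat $[b_\nu c]$ as a polynomial in the formal variable $\nu$ with coefficients in $R$, then substitute $\nu=\lambda-\partial$ with $\partial$ acting as the derivation on $R$, and finally apply conformal sesquilinearity in the $\mu$-pairing so that every $\partial$ introduced by the substitution is absorbed into a factor of $\mu$. Once this substitution convention is pinned down, the rest of the proof is a routine coefficient comparison, and the redundancy of the $\mu$-coefficient identity is a short calculation using only Novikov invariance and symmetry of $(\cdot,\cdot)$.
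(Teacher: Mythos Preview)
Your proof is correct and follows essentially the same approach as the paper: both reduce symmetry and nondegeneracy trivially, then expand each side of the invariance identity using $[a_\mu b]=\partial(b\circ a)+\mu(a\star b)+[a,b]$ and compare coefficients of $1,\lambda,\mu$ to obtain the Lie invariance and Novikov invariance conditions. You are slightly more explicit than the paper in spelling out the substitution convention for $b_{\lambda-\partial}$, the nondegeneracy argument via $R^{\ast c}\cong\bfk[\partial]A^\ast$, and the redundancy of the $\mu$-coefficient identity (the paper simply writes ``Note that $(\cdot,\cdot)$ is symmetric. Therefore the conclusion holds''), but the underlying computation is identical.
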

\begin{proof}
    Obviously, $\langle \cdot, \cdot \rangle_\lambda$ is symmetric and nondegenerate if and only if $(\cdot, \cdot)$ is symmetric and nondegenerate.
    Let $a$, $b$ and $c\in A$. Then we have
\vspb
\begin{eqnarray*}
        &&\langle [a_\mu b], c \rangle_\lambda = \langle \partial(b\circ a)+\mu (a\star b)+[a,b], c \rangle_\lambda=-\lambda (b\circ a, c)+\mu (a\star b, c)+([a,b],c),\\
        &&\langle a, [b_{\lambda-\partial} c] \rangle_\mu =\langle a, \partial (c\circ b)+(\lambda-\partial)(b\star c) +[b,c]\rangle_\mu
        =\lambda (a, b\star c)-\mu (a, b\circ c)+(a, [b,c]).
    \end{eqnarray*}
Therefore, $\langle [a_\mu b], c \rangle_\lambda=\langle a, [b_{\lambda-\partial} c] \rangle_\mu$ if and only if
\vspb
$$-(b\circ a, c)=(a, b\star c),\;\; (a\star b, c)=-(a, b\circ c),\;\; (a, [b,c])=([a,b],c).$$
    Note that $(\cdot, \cdot)$ is symmetric. Therefore the conclusion holds.
\vspb
\end{proof}

\begin{defi}\mcite{L}
    A (finite) {\bf conformal Manin triple of Lie conformal algebras} is a triple of finite Lie conformal algebras $(R, R_0, R_1)$, where $R$ is equipped with a nondegenerate invariant symmetric conformal bilinear form $\langle \cdot, \cdot \rangle_\lambda$ such that
    \begin{enumerate}
        \item $R_0$ and $R_1$ are Lie conformal subalgebras of $R$, and $R=R_0\oplus R_1$ as ${\bf k}[\partial]$-modules;
        \item $R_0$ and $R_1$ are isotropic with respect to $\langle \cdot, \cdot \rangle_\lambda$, that is $\langle R_i, R_i \rangle_\lambda=0$ for $i=0$, $1$.
    \end{enumerate}
\end{defi}

\begin{pro}\mlabel{conf-Manin triple}
    Let $(A, \circ_A, [\cdot,\cdot]_A)$ and $(A^\ast, \circ_{A^\ast}, [\cdot,\cdot]_{A^\ast})$ be \gdas, and ${\bf k}[\partial]A$ and ${\bf k}[\partial]A^\ast$ be the corresponding Lie conformal algebras. Then $(
    {\bf k}[\partial](A\oplus A^\ast), {\bf k}[\partial]A, {\bf k}[\partial]A^\ast)$ is a conformal Manin triple with the bilinear form
\vspb
\begin{eqnarray}\mlabel{Manin triple1}
\langle a+f, b+g\rangle_\lambda = \langle f,
b\rangle+\langle g, a\rangle,\;\; a, b\in A, f, g\in
A^\ast,
\end{eqnarray}
if and only if $(A\oplus A^\ast, (A, \circ_A, [\cdot,\cdot]_A), (A^\ast, \circ_{A^\ast}, [\cdot,\cdot]_{A^\ast}))$ is a Manin triple of \gdas associated with the bilinear form defined by Eq. \meqref{bilinear}.
\end{pro}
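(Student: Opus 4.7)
The plan is to establish both directions by translating each defining condition of a conformal Manin triple into the corresponding condition of a Manin triple of \gdas, using the correspondences between \gdas and Lie conformal algebras already available in the paper.

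First I would observe that $\bfk[\partial](A\oplus A^\ast)\cong \bfk[\partial]A\oplus \bfk[\partial]A^\ast$ as $\bfk[\partial]$-modules, so the direct sum decomposition in the definition of a conformal Manin triple is automatic. The conformal bilinear form $\langle\cdot,\cdot\rangle_\lambda$ defined by Eq.~(\mref{Manin triple1}) is constant in $\lambda$ on $A\oplus A^\ast$, so via the sesquilinearity relations $\langle\partial u,v\rangle_\lambda=-\lambda\langle u,v\rangle_\lambda=-\langle u,\partial v\rangle_\lambda$ its symmetry and nondegeneracy on the whole of $\bfk[\partial](A\oplus A^\ast)$ are equivalent to those of the bilinear form $(\cdot,\cdot)$ defined by Eq.~(\mref{bilinear}). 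Moreover, the isotropy conditions $\langle\bfk[\partial]A,\bfk[\partial]A\rangle_\lambda=0$ and $\langle\bfk[\partial]A^\ast,\bfk[\partial]A^\ast\rangle_\lambda=0$ reduce to $(A,A)=0$ and $(A^\ast,A^\ast)=0$, which are immediate from Eq.~(\mref{bilinear}).

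Next I would apply Proposition~\mref{Correspond-Algebra} to match \gda structures $(A\oplus A^\ast,\bullet,[\cdot,\cdot])$ whose restrictions to $A$ and $A^\ast$ recover the given \gdas with Lie conformal algebra structures on $\bfk[\partial](A\oplus A^\ast)$ of the shape prescribed there whose restrictions to $\bfk[\partial]A$ and $\bfk[\partial]A^\ast$ recover the given Lie conformal algebras. Under this correspondence, $A$ and $A^\ast$ being GD subalgebras of $A\oplus A^\ast$ translates to $\bfk[\partial]A$ and $\bfk[\partial]A^\ast$ being Lie conformal subalgebras of $\bfk[\partial](A\oplus A^\ast)$, while invariance of $(\cdot,\cdot)$ on the \gda $A\oplus A^\ast$ translates, by Proposition~\mref{corresp-bilinear}, to invariance of $\langle\cdot,\cdot\rangle_\lambda$ on $\bfk[\partial](A\oplus A^\ast)$. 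Chaining these equivalences yields the claim.

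The main subtlety occurs in the reverse direction: given a conformal Manin triple, one must confirm that the ambient Lie conformal algebra structure on $\bfk[\partial](A\oplus A^\ast)$ is of the form in Proposition~\mref{Correspond-Algebra} (so that Proposition~\mref{corresp-bilinear} may then be invoked). The brackets within $\bfk[\partial]A$ and $\bfk[\partial]A^\ast$ are of this form by hypothesis, so only the cross bracket $[a_\lambda f]$ for $a\in A$, $f\in A^\ast$ needs attention. Here I would expand $[a_\lambda f]=\sum_{k\geq 0}\lambda^k c_k$ with $c_k\in \bfk[\partial](A\oplus A^\ast)$ and exploit the invariance identity $\langle [a_\mu f],g\rangle_\lambda=\langle a,[f_{\lambda-\partial}g]\rangle_\mu$ (and its analogue pairing with $b\in A$) together with the isotropy of $\bfk[\partial]A$ and $\bfk[\partial]A^\ast$ and the nondegeneracy of the form to force $c_k=0$ for $k\geq 2$ and to pin down $c_0$, $c_1$ in precisely the shape $[a_\lambda f]=\partial(f\bullet a)+\lambda(a\bullet f+f\bullet a)+[a,f]$ prescribed in Proposition~\mref{Correspond-Algebra}. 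Once this is in place, the ambient Lie conformal bracket is the one associated to a \gda structure on $A\oplus A^\ast$, and the translations in the previous paragraph complete the argument.
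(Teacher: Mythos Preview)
Your proposal is correct and hinges on Proposition~\mref{corresp-bilinear}, which is exactly the paper's argument: the paper's proof reads in full ``It follows from Proposition~\mref{corresp-bilinear}.'' Your third paragraph goes beyond the paper by treating the reverse implication under the broader reading that the ambient Lie conformal structure on $\bfk[\partial](A\oplus A^\ast)$ is not assumed a priori to come from a \gda; the paper evidently intends that hypothesis to be built in, but your invariance-and-nondegeneracy sketch (pairing $[a_\mu f]$ with $g\in A^\ast$ and using $\langle[b_\nu a],f\rangle_\lambda=\langle b,[a_{\lambda-\partial}f]\rangle_\nu$ with $[b_\nu a]$ known to control the $A^\ast$-components) is a sound way to force the cross bracket into the shape required by Proposition~\mref{Correspond-Algebra} and thereby close that gap.
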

\begin{proof}
    It follows from Proposition \mref{corresp-bilinear}.
\vspb
\end{proof}

\begin{pro}\mlabel{equi-bialgebra} Let $R$ be a finite Lie conformal algebra which is free as a ${\bf k}[\partial]$-module and $(R, \delta)$ be a Lie conformal coalgebra. Suppose that there is a Lie conformal algebra structure on $R^{\ast c}$ which is obtained from $\delta$. Then the following conditions are equivalent.
    \begin{enumerate}
        \item \mlabel{it:11}$(R, [\cdot_\lambda \cdot], \delta)$ is a Lie conformal bialgebra;
        \item \mlabel{it:12} $(R\oplus R^{\ast c}, R, R^{\ast c})$ is a conformal Manin triple of Lie conformal algebras with the conformal bilinear form given by
\vspb
        \begin{eqnarray}
            \langle a+f, b+g\rangle_\lambda=f_\lambda(b)+g_{-\lambda}(a), \;\;\; a, b\in R, f, g \in R^{\ast c}.
        \end{eqnarray}
        \item \mlabel{it:13} $(R, R^{\ast c}, \ad_{R}^\ast, \ad_{R^{\ast c}}^\ast)$ is a matched pair of Lie conformal algebras.
    \end{enumerate}
\end{pro}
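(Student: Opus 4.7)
The plan is to establish (ii) $\Leftrightarrow$ (iii) first, then (i) $\Leftrightarrow$ (iii), paralleling the structure of Corollary \mref{GD-equi} but at the conformal level. The three conditions respectively express: (i) the cocycle compatibility between $[\cdot_\lambda\cdot]$ and $\delta$; (ii) a geometric decomposition of the doubled Lie conformal algebra equipped with an invariant pairing; and (iii) the algebraic matched pair structure with the specific coadjoint actions $\ad_R^\ast$ and $\ad_{R^{\ast c}}^\ast$.

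For (ii) $\Leftrightarrow$ (iii), note that once $R$ and $R^{\ast c}$ are embedded as Lie conformal subalgebras with the prescribed brackets, the entire Lie conformal algebra structure on $R \oplus R^{\ast c}$ is determined by the cross bracket $[a_\lambda f]$ for $a\in R$, $f\in R^{\ast c}$. For the direction (ii) $\Rightarrow$ (iii), I would decompose $[a_\lambda f] = x + g$ with $x\in R[\lambda]$ and $g\in R^{\ast c}[\lambda]$, and then use isotropy of $R$, $R^{\ast c}$ together with invariance of $\langle \cdot, \cdot \rangle_\lambda$ to pair $[a_\lambda f]$ against arbitrary $b\in R$ and $h\in R^{\ast c}$. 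Unwinding these pairings forces $g = \ad_R^\ast(a)_\lambda f$ and $x = -\ad_{R^{\ast c}}^\ast(f)_{-\lambda-\partial} a$, which reproduces exactly the matched pair bracket of Eq.~\meqref{107}. Conversely, given the matched pair, define the bracket on $R\oplus R^{\ast c}$ via Eq.~\meqref{107}; this is Lie conformal by hypothesis, $R$ and $R^{\ast c}$ are isotropic Lie conformal subalgebras by construction, and the invariance of the pairing reduces directly to the definitions of the coadjoint actions.

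For (i) $\Leftrightarrow$ (iii), the matched pair axiom is equivalent to the Jacobi identity of the bracket \meqref{107} on $R \oplus R^{\ast c}$. Separating arguments by their components gives four families of identities: Jacobi for $R$ and Jacobi for $R^{\ast c}$ (both given); a mixed Jacobi identity for $(a, b, f)$ with $a, b\in R$ and $f \in R^{\ast c}$; and a mixed Jacobi identity for $(a, f, g)$ with $a\in R$ and $f, g\in R^{\ast c}$. The representation conditions on $\ad_R^\ast$ and $\ad_{R^{\ast c}}^\ast$ are automatic. Using that the Lie conformal bracket on $R^{\ast c}$ is dual to $\delta$, one checks that the first mixed identity, when paired against an arbitrary element of $R^{\ast c}$ in its $R$-component and against an arbitrary element of $R$ in its $R^{\ast c}$-component, is precisely equivalent to the cocycle condition $a_\lambda \delta(b) - b_{-\lambda-\partial} \delta(a) = \delta([a_\lambda b])$. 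By the symmetric roles of $R$ and $R^{\ast c}$ in the doubled algebra, the second mixed identity reduces to the dual of this cocycle condition and therefore to the same equation.

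The main obstacle will be the bookkeeping of $\lambda$-dependencies in the dualization, since the coadjoint action $(\ad^\ast(a)_\lambda f)_\mu v = -f_{\mu-\lambda}(\ad(a)_\lambda v)$ involves an auxiliary variable $\mu$ while the bracket \meqref{107} introduces the variable $-\lambda - \partial$ in one of its slots. Translating systematically between these variables while respecting the conformal sesquilinearity $\langle \partial u, v\rangle_\lambda = -\lambda\langle u, v\rangle_\lambda = -\langle u, \partial v\rangle_\lambda$ requires careful manipulation, but uses no new ideas beyond the conformal duality arguments already appearing in the proofs of Propositions \mref{conformal operator1} and \mref{conformal operator2}.
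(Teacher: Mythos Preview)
Your approach is correct, and your argument for (ii)$\Leftrightarrow$(iii) via the invariance-forces-coadjoint-action computation is essentially what the paper has in mind when it cites the analogous result \cite[Theorem~3.6]{HB2}. The difference lies in how the triangle is closed: the paper invokes \cite[Theorem~3.9]{L} for (i)$\Leftrightarrow$(ii) and then (ii)$\Leftrightarrow$(iii), whereas you bypass Liberati's result and prove (i)$\Leftrightarrow$(iii) directly by expanding the Jacobi identity of the doubled bracket \meqref{107} into its mixed components and dualizing. Your route is more self-contained and makes the mechanism explicit---the cocycle condition really is the content of the $R$-component of the mixed Jacobi identity for $(a,b,f)$---at the price of the $\lambda$-bookkeeping you anticipate; the paper's route is shorter but rests on the cited references.

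One small imprecision worth flagging in your (i)$\Leftrightarrow$(iii) sketch: the $R^{\ast c}$-component of the $(a,b,f)$ Jacobi identity is precisely the statement that $\ad_R^\ast$ is a representation, which you have already noted is automatic. So only the $R$-component carries new content, and your phrase ``paired against an arbitrary element of $R^{\ast c}$ in its $R$-component and against an arbitrary element of $R$ in its $R^{\ast c}$-component, is precisely equivalent to the cocycle condition'' slightly obscures this asymmetry. Likewise, for the $(a,f,g)$ identity you will need the fact that the dual cocycle condition (for $R^{\ast c}$ with co-bracket induced by $[\cdot_\lambda\cdot]_R$) is equivalent to the original one; this holds because $R$ is finite and free over ${\bf k}[\partial]$, so that $(R^{\ast c})^{\ast c}\cong R$, but it deserves a sentence.
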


\begin{proof}
    By \cite[Theorem 3.9]{L}, Conditions~(\mref{it:11}) and (\mref{it:12})  are equivalent. With a similar proof as that in \cite[Theorem 3.6]{HB2}, one can check that Conditions~(\mref{it:12}) and (\mref{it:13}) are equivalent.
\vspb
\end{proof}

\begin{pro}
    Let $(A, \circ_A, [\cdot,\cdot]_A, \Delta, \delta_0)$ be a \gdba and $R={\bf k}[\partial]A$ be the Lie conformal algebra corresponding to  $(A, \circ_A, [\cdot,\cdot]_A)$. Then
    the Lie conformal bialgebra $(R, [\cdot_\lambda \cdot], \delta)$ with $\delta$ defined by Eq.~(\mref{cobracket}) is exactly the one obtained from the conformal Manin tripe $({\bf k}[\partial](A\oplus A^\ast)$, ${\bf k}[\partial]A$, ${\bf k}[\partial]A^\ast)$ given in Proposition \mref{conf-Manin triple} or the matched pair $(R, R^{\ast c}, \ad_R^\ast, \ad_{R^{\ast c}}^\ast)$ given in Corollary \mref{equi-matched pair}.
\end{pro}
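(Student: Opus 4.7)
The plan is to reduce the statement to a single identification: that the cobracket $\delta$ defined by Eq.~(\mref{cobracket}) on $R$ dualizes, via the canonical conformal pairing between $R$ and $R^{\ast c}={\bf k}[\partial]A^\ast$, to the Lie conformal algebra $\lambda$-bracket on $R^{\ast c}$ that corresponds, via Proposition~\mref{Correspond-Algebra}, to the dual \gda $(A^\ast, \Delta^\ast, \delta_0^\ast)$. Assuming this, both constructions in the statement yield $(R, [\cdot_\lambda\cdot], \delta)$: by Proposition~\mref{equi-bialgebra} the conformal Manin triple and the matched pair $(R, R^{\ast c}, \ad_R^\ast, \ad_{R^{\ast c}}^\ast)$ determine the same Lie conformal bialgebra, whose cobracket is dual to the $\lambda$-bracket on $R^{\ast c}$; and by Corollary~\mref{equi-matched pair} applied to the matched pair of \gdas associated to our \gdba via Corollary~\mref{GD-equi}, together with Proposition~\mref{dual resp}, this $\lambda$-bracket is exactly the one on $R^{\ast c}$ coming from $(A^\ast, \Delta^\ast, \delta_0^\ast)$.

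To establish the identification, I would compute the conformal pairing $\langle \delta(a), f\otimes g\rangle_\mu$ for $a\in A$ and $f, g\in A^\ast\subset R^{\ast c}$, which by definition of the dual Lie conformal algebra structure on $R^{\ast c}$ equals $\langle a, [f_\mu g]_{R^{\ast c}}\rangle$. Writing $\delta(a)=\delta_0(a)+(\partial\otimes\id)\Delta(a)-\tau(\partial\otimes\id)\Delta(a)$ as in Eq.~(\mref{cobracket}), the three terms contribute respectively $\langle a,[f,g]_{A^\ast}\rangle$, a term of the form $-\mu\langle a, f\circ_{A^\ast}g\rangle$ from the $\partial$-shift acting on the first tensor factor, and a term of the form $(\mu+\partial)\langle a, g\circ_{A^\ast}f\rangle$ coming from the flip combined with the $\partial$-shift. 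Summing yields $\langle a, \partial(g\circ_{A^\ast}f)+\mu(f\star_{A^\ast}g)+[f,g]_{A^\ast}\rangle$, which by Proposition~\mref{Correspond-Algebra} is precisely the pairing of $a$ with $[f_\mu g]$ in the Lie conformal algebra corresponding to $(A^\ast, \Delta^\ast, \delta_0^\ast)$. This computation is formally dual to the proof of Proposition~\mref{corr-coalgebra} and uses no ingredient beyond it.

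The main obstacle is the bookkeeping of the $\mu$- and $\partial$-shifts introduced by the conformal pairing: the two terms $(\partial\otimes\id)\Delta(a)$ and $\tau(\partial\otimes\id)\Delta(a)$ must be paired carefully and separately, since only their combined contribution reconstructs both the $\partial$- and $\mu$-coefficients appearing on the $R^{\ast c}$ side. Once this is done, the coincidence of the three Lie conformal bialgebras is immediate: the one from Theorem~\mref{thm1-corr} uses $\delta$ directly, while the other two recover the same $\delta$ by dualizing the Lie conformal algebra structure on $R^{\ast c}$ extracted from the matched pair or the conformal Manin triple.
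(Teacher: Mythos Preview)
Your approach is essentially the paper's: both reduce the claim to checking that the cobracket $\delta$ from Eq.~(\mref{cobracket}) dualizes to the $\lambda$-bracket on $R^{\ast c}={\bf k}[\partial]A^\ast$ corresponding to the \gda $(A^\ast,\circ_{A^\ast},[\cdot,\cdot]_{A^\ast})$, and both carry this out by pairing $\delta(a)$ against $f\otimes g$ for $a\in A$, $f,g\in A^\ast$.

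One technical point to tighten. The conformal pairing of $\delta(a)\in R\otimes R$ against $f\otimes g$ is a \emph{two}-parameter object; the paper writes it as $(f\otimes g)_{\mu,\lambda-\mu}(\delta(a))$ and identifies it with $\langle [f_\mu g],a\rangle_\lambda$. Your single-subscript notation $\langle \delta(a),f\otimes g\rangle_\mu$ suppresses $\lambda$, and the residual ``$\partial$'' in your third contribution $(\mu+\partial)\langle a,g\circ_{A^\ast}f\rangle$ is really the second parameter in disguise. With the paper's conventions the two $\partial$-terms contribute $+\mu\langle f\circ_{A^\ast}g,a\rangle$ and $-(\lambda-\mu)\langle g\circ_{A^\ast}f,a\rangle$ respectively; your stated intermediate signs ($-\mu$ and $\mu+\partial$) do not literally sum to the $\mu(f\star_{A^\ast}g)$ coefficient you claim, so be careful when you write this out in full. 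Once the two-parameter bookkeeping is done correctly, the computation is exactly the one in the paper and the conclusion follows.
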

\begin{proof}
    Since $(A, \circ_A, [\cdot,\cdot]_A, \Delta, \delta_0)$ is a \gdba, the triple  $(A\oplus A^\ast, (A, \circ_A, [\cdot,\cdot]_A), (A^\ast, \circ_{A^\ast},$ $ [\cdot,\cdot]_{A^\ast}))$ is a Manin tripe of \gdas associated with the bilinear form given by Eq. (\mref{bilinear}) and the octuple $(A, A^\ast, L_{\star_A}^\ast, $ $-R_{\circ_A}^\ast, $ $\ad_A^\ast, L_{\star_{A^\ast}}^\ast, -R_{\circ_{A^\ast}}^\ast, \ad_{A^\ast}^\ast)$ is a matched pair of \gdas by Corollary \mref{GD-equi}. By Proposition \mref{conf-Manin triple} and Corollary \mref{equi-matched pair}, $({\bf k}[\partial](A\oplus A^\ast), {\bf k}[\partial]A, {\bf k}[\partial]A^\ast)$ is a conformal Manin triple and $(R, R^{\ast c}, \ad_R^\ast, \ad_{R^{\ast c}}^\ast)$ is a matched pair of Lie conformal algebras. By Proposition \mref{equi-bialgebra}, the Lie conformal bialgebra $(R, [\cdot_\lambda \cdot], \overline{\delta})$ obtained from the conformal Manin tripe $({\bf k}[\partial](A\oplus A^\ast), {\bf k}[\partial]A, {\bf k}[\partial]A^\ast)$ or  the matched pair $(R, R^{\ast c}, \ad_R^\ast, \ad_{R^{\ast c}}^\ast)$ is given by
\vspb
\begin{eqnarray*}
\langle[f_\mu g], a\rangle_\lambda =\sum_{(a)}f_\mu(a_{(1)})g_{\lambda-\mu}(a_{(2)})=(f\otimes g)_{\mu, \lambda-\mu}(\overline{\delta}(a)), \;\; a\in A, f, g\in A^\ast,
\vspb
\end{eqnarray*}
where $\overline{\delta}(a)=\sum_{(a)}a_{(1)}\otimes a_{(2)}$. Let $a\in A, f, g\in A^\ast$. Set $\delta_0(a)=\sum a_{(1)}\otimes a_{(2)}$ and $\Delta(a)=\sum \overline{a_{(1)}}\otimes \overline{a_{(2)}}$.
Then we have
\vspb
\begin{eqnarray*}
(f\otimes g)_{\mu, \lambda-\mu}(\overline{\delta}(a))&=&\langle [f_\mu g], a\rangle_\lambda\\
&=&\langle \partial (g\circ_{A^\ast} f)+\mu(f\circ_{A^\ast} g+g\circ_{A^\ast} f)+[f,g]_{A^\ast}, a\rangle_\lambda\\
&=&-\lambda \langle g\circ_{A^\ast} f, a \rangle+\mu \langle f\circ_{A^\ast} g+g\circ_{A^\ast} f, a \rangle+\langle [f,g]_{A^\ast}, a \rangle\\
&=& -\lambda \langle g\otimes f, \Delta(a) \rangle +\mu \langle f\otimes g+g\otimes f, \Delta(a) \rangle+\langle f\otimes g, \delta_0(a)\rangle,\\
(f\otimes g)_{\mu, \lambda-\mu}(\delta(a))&=&(f\otimes g)_{\mu, \lambda-\mu}((\partial\otimes \id)\Delta(a)-\tau(\partial\otimes \id)\Delta(a)+\delta_0(a))\\
&=& (f\otimes g)_{\mu, \lambda-\mu}(\sum(\partial \overline{a_{(1)}}\otimes \overline{a_{(2)}}-\overline{a_{(2)}}\otimes \partial \overline{a_{(1)}})+\sum a_{(1)}\otimes a_{(2)})\\
&=& \sum(f_\mu (\partial \overline{a_{(1)}})g_{\lambda-\mu}(\overline{a_{(2)}})-f_\mu(\overline{a_{(2)}})g_{\lambda-\mu}(\partial \overline{a_{(1)}}))+\sum f(a_{(1)})g(a_{(2)})\\
&=&\mu \sum f(\overline{a_{(1)}})g(\overline{a_{(2)}})-(\lambda-\mu) \sum f(\overline{a_{(2)}})g(\overline{a_{(1)}})+\sum f(a_{(1)})g(a_{(2)})\\
&=& -\lambda \langle g\otimes f, \Delta(a) \rangle +\mu \langle f\otimes g+g\otimes f, \Delta(a) \rangle+\langle f\otimes g, \delta_0(a)\rangle.
    \end{eqnarray*}
    Hence $(f\otimes g)_{\mu, \lambda-\mu}(\overline{\delta}(a))=(f\otimes g)_{\mu, \lambda-\mu}(\delta(a))$. Therefore
    $\overline{\delta}(a)=\delta(a)$. Then the proof is completed.
\end{proof}

By the above discussion, we have the following commutative diagram.
\begin{displaymath}
    \xymatrix{
        \txt{\tiny $(A, A^\ast, L_{\star_A}^\ast, -R_{\circ_A}^\ast, \ad_A^\ast, L_{\star_{A^\ast}}^\ast, -R_{\circ_{A^\ast}}^\ast, \ad_{A^\ast}^\ast)$\\ \tiny a matched pair of\\ \tiny \gdas}\ar@{<->}[rr]^-{\rm Cor. \mref{GD-equi}}  \ar@{<->}[d]_{\rm Cor. \mref{equi-matched pair}}&&
        \txt{\tiny $(A, \circ_A, [\cdot,\cdot]_A, \Delta, \delta_0)$\\ \tiny a \gdba} \ar@{<->}[rr]^-{\rm Cor. \mref{GD-equi}} \ar@{<->}[d]^(.6){\rm Thm. \mref{consthm1}}&& \txt{\tiny $(A\oplus A^\ast, (A, \circ_A, [\cdot,\cdot]_A), (A^\ast, \circ_{A^\ast}, [\cdot,\cdot]_{A^\ast}))$\\ \tiny a Manin tripe of \\ \tiny \gdas} \ar@{<->}[d]_{\rm Prop. \mref{conf-Manin triple}}\\
        \txt{\tiny $(R={\bf k}[\partial]A, R^{\ast c}, \ad_R^\ast, \ad_{R^{\ast c}}^\ast)$\\ \tiny a matched pair of \\ \tiny Lie conformal algebras}\ar@{<->}[rr]^-{\rm Prop. \mref{equi-bialgebra}}  &&
        \txt{\tiny  $(R={\bf k}[\partial]A, [\cdot_\lambda \cdot], \delta)$\\ \tiny a Lie conformal bialgebra} \ar@{<->}[rr]^-{\rm Prop. \mref{equi-bialgebra}} && \txt{\tiny $({\bf k}[\partial](A\oplus A^\ast), {\bf k}[\partial]A, {\bf k}[\partial]A^\ast)$ \\ \tiny a conformal Manin triple\\  \tiny of Lie conformal algebras}
    }
\end{displaymath}

\noindent {\bf Acknowledgments.} This research is supported by
NSFC (12171129, 11931009, 12271265, 12261131498, 12326319),
 the Fundamental Research Funds for the Central Universities and Nankai Zhide Foundation.

\smallskip

\noindent
{\bf Declaration of interests. } The authors have no conflicts of interest to disclose.

\smallskip

\noindent
{\bf Data availability. } No new data were created or analyzed in this study.

\vspace{-.2cm}

\end{document}